\numberwithin{equation}{section}
\numberwithin{figure}{section}
\numberwithin{table}{section}
\newtheorem{theorem}{Theorem}[section]
\newtheorem{remark}{Remark}[section]
\newtheorem{example}{Example}[section]
\begin{document}

\title{Equilibrium preserving space in discontinuous Galerkin methods for hyperbolic balance laws}
\date{}

\author{}
\author{Jiahui Zhang\thanks{School of Mathematical Sciences, University of Science and Technology of China, Hefei, Anhui 230026, P.R. China.  E-mail: zjh55@mail.ustc.edu.cn.}
	\and Yinhua Xia\thanks{Corresponding author. School of Mathematical Sciences, University of Science and Technology of China, Hefei, Anhui 230026, P.R. China.  E-mail: yhxia@ustc.edu.cn.
	}
	\and Yan Xu\thanks{School of Mathematical Sciences, University of Science and Technology of China, Hefei, Anhui 230026, P.R. China.  E-mail: yxu@ustc.edu.cn.
	}
}

\maketitle

\begin{abstract}
  In this paper, we develop a general framework for the design of the arbitrary high-order well-balanced discontinuous Galerkin (DG) method for hyperbolic balance laws, including the compressible Euler equations with gravitation and the shallow water equations with horizontal temperature gradients (referred to as the Ripa model). Not only the hydrostatic equilibrium including the more complicated isobaric steady state in Ripa system, but our scheme is also well-balanced for the exact preservation of the moving equilibrium state. The strategy adopted is to approximate the equilibrium variables in the DG piecewise polynomial space, rather than the conservative variables, which is pivotal in the well-balanced property. Our approach provides flexibility in combination with any consistent numerical flux, and it is free of the reference equilibrium state recovery and the special source term treatment. This approach enables the construction of a well-balanced method for non-hydrostatic equilibria in Euler systems. Extensive numerical examples such as moving or isobaric equilibria validate the high order accuracy and exact equilibrium preservation for various flows given by hyperbolic balance laws. With a relatively coarse mesh, it is also possible to capture small perturbations at or close to steady flow without numerical oscillations.

  \smallskip
\textbf{Keywords}: Euler equations with gravitation; Ripa model; discontinuous Galerkin method; equilibrium preserving space; well-balanced.
\end{abstract}

\section{Introduction}\label{se:in}
A class of hyperbolic conservation laws with source terms, also known as hyperbolic balance laws, is considered in this paper. They have been widely used in various fields, including chemistry, biology, fluid dynamics, astrophysics, meteorology, etc. In the one-dimensional case, such a model takes the form of the following equation
\begin{equation}\label{hbl}
  u_t + f(u)_x = r(u),
\end{equation}
where $u,f(u),r(u)$ denote the conservative variable, the physical flux, and the source term, respectively. An important feature of such models is that they usually admit non-trivial steady-state solutions, in which the non-zero flux gradient exactly balances the source term.

As a typical example of hyperbolic balance laws, the shallow water equations incorporating the horizontal temperature gradients were introduced in \cite{ripa1993conservation} for modeling ocean currents. We often refer to it as the Ripa model. The shallow water equations are derived from the incompressible Navier-Stokes equations, assuming the density is constant. The Ripa system is a generalized model of the shallow water equations. Based on multi-layered ocean models with several layers of different constant densities, the Ripa system is obtained by vertically averaging each layer's density, horizontal pressure gradient, and velocity fields. The introduction of horizontal temperature gradients is advantageous to represent the variations in the fluid density within each layer.

Due to the presence of the source term, many standard numerical methods may only capture small perturbations of the near-equilibrium flow if extremely refined meshes are used. As a result, the well-balanced schemes were first proposed by Bermudez and Vazquez \cite{bermudez1994upwind} in the shallow water equations to preserve the exact equilibrium state solutions at a discrete level. There has been tremendous interest in constructing well-balanced schemes for the Ripa system in the last few years. Chertock et al. \cite{chertock2014central} first proposed a well-balanced and positivity-preserving central-upwind scheme using an interface tracking technique. A second-order accurate unstaggered central finite volume scheme was developed by Touma and Klingenberg \cite{touma2015well}. Based on the path-conservative approximate Riemann solver, \cite{sanchez2016hllc} derived an HLLC type scheme for the Ripa model, which possessed the well-balanced and positivity-preserving property at the same time. To achieve high order accuracy as well as capture the small perturbations of the hydrostatic state well without numerical oscillations near the discontinuity, several works \cite{han2016well,qian2018high,li2020high} under the framework of finite difference and discontinuous Galerkin methods were proposed. They extended the well-balanced approach based on modifying the numerical fluxes with the hydrostatic reconstruction and the special source term splitting used in the shallow water equations to the Ripa model. These schemes above mainly focus on the still water steady state (with zero velocity). Similar to the shallow water equations, there are some limitations in these schemes developed for the still water equilibrium that cannot be generalized to the moving water equilibrium (with non-zero velocity), which is much more difficult. The second-order surface reconstruction schemes were introduced in \cite{dong2022well} to maintain the still water and moving water equilibrium state solutions and guarantee the positivity of the water height and the temperature. The well-balanced discontinuous Galerkin method, which decomposed the numerical results into the equilibrium and perturbation part, was first introduced by \cite{xing2014exactly} for arbitrary equilibria of the shallow water equations. Britton and Xing \cite{britton2020high} extended the analogous idea to the Ripa model with a new and simple choice of Radau projection operator to recover the reference equilibrium values.

Another extensively considered example is the Euler equations under gravitational fields, which has many interesting phenomena in astrophysical and atmospheric, such as core-collapse supernova simulation, stellar evolution, numerical weather forecasting, etc. The first work was developed by LeVeque and Bale \cite{LeVeque1998wave}, which used a quasi-steady wave-propagation algorithm for the near-steady flow of an ideal gas subject to a static gravitational field. Later, many well-balanced schemes for the Euler equations with gravity have been designed in the literature. They may be broadly divided into three categories based on the steady states of interest to maintain.
The first category is that the steady state is assumed to be globally known and given as analytical formula. Based on a special source term treatment, Li and Xing introduced high order well-balanced finite volume, finite difference, and discontinuous Galerkin schemes, respectively, in \cite{li2016highfve,li2016welldge,li2018wellfde}. The $\alpha$-$\beta$ method presented by Klingenberg et al. \cite{klingenberg2019arbitrary} was well-balanced for a generally known hydrostatic equilibrium state as well as one type of moving equilibria and can arrive at arbitrary order of accuracy. For further recent development and discussion, we refer to the articles \cite{ghosh2016well,touma2016well,li2021simple,zhang2021high,wu2021uniformly}.
The second type is aimed at maintaining the barotropic hydrostatic state, where a certain barotropic relation is satisfied to impose a thermal stratification of the equilibrium state effectively. Isentropic conditions in which the specific entropy is constant and isothermal conditions in which the temperature is constant are prominent examples considered by many researchers, see \cite{kappeli2014well,li2018welldge,grosheintz2019high,chandrashekar2015second,botta2004well} for further information.
The third category is designed to preserve a consistent discretization of the partial differential equations underlying the steady state of interest. Some second-order schemes were studied in \cite{kappeli2016well,varma2019second,chertock2018well} without additional assumptions and a priori knowledge of the hydrostatic solution. A high-order method was generalized by Berberich et al. \cite{berberich2020high}. The only caveat is, of course, that this classification is more or less not strict. Moreover, the schemes above are mainly concerned with hydrostatic stationary state preservation. A generalized moving equilibrium with the non-zero velocity finite volume method was developed in \cite{grosheintz2020well}, where a discrete equivalent of steady adiabatic flow was preserved.
In addition, a local projection step was adopted in \cite{guerrero2021well} to develop a novel well-balanced high order DG method for systems of balance laws, and this strategy can be adapted to both Runge-Kutta and ADER time discretization.
C. Par{\'e}s et.al. developed arbitrary order accuracy well-balanced finite volume methods \cite{castro2020well} and finite difference methods \cite{pares2021well} based on a well-balanced reconstruction operator for systems of balance laws. The main difficulty in the application of this technique comes from the calculation of the stationary solutions at every cell or node at each time step, which equivalent to seek for a solution of an ODE system with prescribed average or point value at the computational interval. And in \cite{gomez2021high,gomez2021collocation}, the authors gave a general implementation of this technique in order to apply to any systems of balance laws.
The well-balanced schemes for more complex systems of hyperbolic equations such as the multi-layer shallow water equations, the general relativistic magnetohydrodynamics and the Einstein field equations were designed in \cite{gaburro2021well,dumbser2023wellbalanced,castro2012central}.

Recently, the discontinuous Galerkin method coupled with Runge-Kutta time discretization (dubbed as RKDG method) has been developed and analyzed by Cockburn et al. \cite{cockburn1989tvb2,cockburn1989tvb3,cockburn1990runge4,cockburn1998runge5} for solving hyperbolic conservation laws. It has an essential advantage in simulating unstructured meshes without losing high-order accuracy and conservation properties.
In this paper, we develop a well-balanced DG method for the Euler equations with gravitation and the Ripa model based on the equilibrium variables, where we can preserve the generalized moving equilibria in one dimension and hydrostatic equilibria in two dimensions.
The idea of change of variables was adopted by Hughes et.al. \cite{hughes1986new}, where they derived symmetric forms of the Euler and Navier-Stokes equations using the entropy variables. By utilizing these entropy variables, the resulting finite element methods are able to satisfy the second law of thermodynamics and a discrete entropy inequality.
Mantri et.al. \cite{mantri2021well} developed a well-balanced DG scheme for 2$\times$2 hyperbolic balance law by solving a linear system of equations for the equilibrium variables given by the global fluxes. In \cite{zhang2023moving}, the approximation of the equilibrium variables in the DG piecewise polynomial space yields a moving water equilibria preserving DG method for the shallow water equations.
Suitable equilibrium variables must be chosen properly for different hyperbolic systems, based on the ideas from \cite{zhang2023moving}, in order to ensure the well-balance and accuracy of the proposed approach. Starting from the steady state provided by the steady adiabatic flow \cite{grosheintz2020well}, we define the specific entropy, non-zero momentum, and the Bernoulli equation to be our equilibrium variables for Euler equations. The well-balanced property is achieved by reformulating the DG space in terms of the equilibrium variables rather than conservative variables, together with the modified numerical fluxes based on the generalized hydrostatic reconstruction. To our knowledge, this is the first attempt at the discontinuous Galerkin method for the Euler equations with gravitation involving non-hydrostatic equilibria. Analogously, the energy, water discharge, and temperature are viewed as the equilibrium variables for the generalized non-hydrostatic equilibria in Ripa model. More complicated zero-velocity steady states which will be explained in detail in Section \ref{se:mo} lead to the main difference from the shallow water equations. The same framework for the equilibrium preserving schemes can also be extended to the constant water height and isobaric steady state, with special treatment of the numerical flux in isobaric steady state. In contrast to schemes \cite{xing2014exactly,gomez2021high}, our approach is free of the reference equilibrium state recovery or the ODE solver for a stationary solution. The theoretical analysis and numerical tests verify that the proposed scheme is well-balanced. Meanwhile, the DG method is naturally high-order accurate and shock-capturing with appropriate limiters.


An outline of the rest paper is as follows.
Section \ref{se:mo} presents the mathematical model of the Euler equations with gravitation and the Ripa system, as well as the corresponding steady-state solutions. We also discuss the transformation between the conservative and equilibrium variables.
In Section \ref{se:wb}, by introducing some necessary notations for discontinuous Galerkin methods, we propose a new well-balanced DG method that can maintain both the hydrostatic equilibria and the generalized one-dimensional moving equilibria for hyperbolic balance laws.
Numerical results in different circumstances are illustrated in Section \ref{se:nu} to validate that the DG scheme is well-balanced, high-order accurate and shock-capturing. Finally,  concluding remarks are presented in the last section.

\section{Model description}\label{se:mo}
To begin with, we introduce two prototype examples of hyperbolic balance laws considered in this paper, the corresponding stationary state solutions and the conservative and equilibrium variables transformation. 

\subsection{Euler equations with gravitation}\label{subse:euler}
The compressible Euler equations with the effect of the gravitational force have the following form  in $d$-dimensional case
\begin{equation}\label{euler2D}
    \left\{\begin{array}{l}
    \rho_t+\nabla \cdot(\rho  \boldsymbol{u})=0, \\
    (\rho  \boldsymbol{u})_t+ \nabla \cdot (\rho  \boldsymbol{u}\otimes \boldsymbol{u} + p \mathbf{I}_d )  =-\rho \nabla\phi,\\
    E_t+\nabla \cdot((E+p) \boldsymbol{u}) = -\rho  \boldsymbol{u}\cdot \nabla\phi,
    \end{array}\right.
\end{equation}
where $\rho, \boldsymbol{u},p$ represent the mass density, the velocity of the gas, and the pressure, respectively. The total fluid energy $E=\rho e + \frac{1}{2}\rho \|\boldsymbol{u}\|^2$ is the sum of the internal and kinetic energy, with $e$ being the internal energy. $\rho \boldsymbol{u}$ denotes the momentum vector and $\phi=\phi(\boldsymbol{x})$ is the time-independent gravitational potential. $\mathbf{I}_d$ is the identity matrix and the operators $\nabla$, $\nabla\cdot$, $\otimes$ denote the gradient, divergence and tensor product. An equation of state (EoS) which relates the specific internal energy to the density and pressure, is important to close this system of equations. For the ideal gas law, it is given by
\begin{equation}\label{eos}
  p = p(\rho,e) = (\gamma-1)\rho e = (\gamma-1)\left(E - \dfrac{1}{2}\rho \|\boldsymbol{u}\|^2\right),
\end{equation}
with $\gamma$ being the ratio of specific heats.

\subsubsection{Steady state solutions}
This model allows for the hydrostatic stationary solutions with $\boldsymbol{u}= 0$ and describes the mechanical balance between pressure and external forces such as gravity:
\begin{equation}\label{hydro:euler}
  \rho = \rho(\boldsymbol{x}), \quad  \boldsymbol{u}= 0, \quad \nabla p = -\rho\nabla \phi.
\end{equation}
It is an underdetermined system of equations. To fully characterize the equilibrium, additional assumptions such as a thermal equilibrium stratification need to be made.
As a concrete astrophysically relevant example of stationary states, isentropic equilibrium is a special case of barotropic hydrostatic equilibria,
in which the pressure is only a function of density. The thermodynamic relation for isentropic conditions is
\begin{equation}\label{is_re}
  d h = Tds+\dfrac{dp}{\rho},
\end{equation}
where the enthalpy $h$ is defined by $h=e+\frac{p}{\rho}$, $T$ is the temperature and $s$ is the specific entropy. For the constant specific entropy ($ds = 0$), (\ref{is_re}) can easily be integrated into the following isentropic hydrostatic equilibrium state
\begin{equation}\label{isen:euler}
  s = \text{constant}, \quad  \boldsymbol{u}= 0, \quad  h + \phi = \text{constant}.
\end{equation}
An equivalent formulation is the polytropic equilibrium state given by
\begin{equation}\label{pol_euler}
  \rho = \left( \dfrac{\gamma-1}{K\gamma}(c-\phi) \right)^\frac{1}{\gamma-1}, \quad \boldsymbol{u}=0, \quad p = \dfrac{1}{K^{\frac{1}{\gamma-1}}}\left(\dfrac{\gamma-1}{\gamma}(c-\phi)\right)^\frac{\gamma}{\gamma-1},
\end{equation}
with $c$ being a constant and $K$ being a constant and depends on the entropy alone, i.e. $K = K(s)$.

Another steady state with non-zero velocity  $\boldsymbol{u}\neq 0$ is referred to as moving or adiabatic equilibrium. It arises from the fact that the entropy inside a fluid particle remains constant for smooth flows, while the entropy does not have to be constant globally. It is advected with the flow. With the help of Bernoulli's equation
\begin{equation}\label{be_eq}
  \dfrac{1}{2}u^2 + h + \phi = \text{constant},
\end{equation}
which is derived in \cite{LANDAU1987313},
the steady adiabatic flow in one dimension takes the following form
\begin{equation}\label{adi:euler}
  s = \text{constant}, \quad \rho u= \text{constant}, \quad  \varepsilon := \dfrac{1}{2}u^2 + h + \phi = \text{constant}.
\end{equation}
Notice that Bernoulli's equation (\ref{be_eq}) is valid along each streamline, lines tangent to the velocity of the flow. It may differ from streamline to streamline. And the extension to multi-dimensional steady adiabatic flow is only exactly well-balanced with streamlines aligned along a computational grid axis. More details can be found in \cite{LANDAU1987313,grosheintz2020well}. We are mainly concerned with one- and two-dimensional isentropic hydrostatic equilibria (\ref{isen:euler}) and moving equilibrium state  (\ref{adi:euler}) in one dimension.

\subsubsection{Conservative and equilibrium variables}

For simplicity of presentation, we take the one-dimensional case as an example to describe the sets of conservative and equilibrium variables on which our well-balanced scheme depends. Before proceeding further, we comment that the ideal gas law is considered in this paper, but the methods presented below are applicable to any convex equation of state.
The conservative variables are defined by $\boldsymbol{U} = (\rho,\rho u,E)^T$, here the superscript $T$ denoting the transpose. For the ideal gas, the polytropic relation which relates the density and pressure at constant entropy, is characterized by
\begin{equation}\label{po_re}
  p=K\rho^\gamma,
\end{equation}
where $K$ is a constant only dependent on the entropy. Thus, using the EoS (\ref{eos}) together with the polytropic relation (\ref{po_re}), we can write the enthalpy as follows
\begin{equation}\label{h_ex}
h = e+\dfrac{p}{\rho} = \dfrac{\gamma}{\gamma-1}\dfrac{p}{\rho} = \dfrac{\gamma}{\gamma-1}K\rho^{\gamma-1}.
\end{equation}
For convenience, when defining the equilibrium variables, we use $K$ as a substitute for specific entropy $s$ in (\ref{adi:euler}), owing to no requirement for the explicit expression of the entropy and constant remaining of $K$ in space and time. Thereby, we can define the one-dimensional equilibrium variables $\boldsymbol{V} = (K,\rho u,\varepsilon)^T$. Given $\boldsymbol{U}$ and gravitational potential $\phi$, we can easily obtain the transform function from conservative variables to equilibrium variables, which is denoted as
\begin{equation}\label{uv:euler}
\boldsymbol V=\boldsymbol{V}(\boldsymbol{U},\phi) =
              \left(\begin{array}{c}
               \displaystyle \frac{p}{\rho^\gamma} \\
                \rho u\\
               \displaystyle \frac{1}{2}\frac{(\rho u)^2}{\rho^2}+\frac{\gamma}{\gamma-1}\frac{p}{\rho} + \phi
              \end{array}\right).
\end{equation}
Here $p$ is acquired from (\ref{eos}). Conversely, given $\boldsymbol {V}$ and $\phi$, the inverse function $\boldsymbol{U}(\boldsymbol{V},\phi)$ cannot be directly computed due to the nonlinearity of the Bernoulli's constant $\varepsilon$. Plugging (\ref{h_ex}) back into (\ref{be_eq}), we have
\begin{equation}\label{nl_solver}
  \dfrac{(\rho u)^2}{2\rho^2}+\dfrac{\gamma}{\gamma-1}K\rho^{\gamma-1} + \phi - \varepsilon = 0,
\end{equation}
from which we need to seek the correct root denoted by $\rho = \rho(K,\rho u,\varepsilon,\phi)$.
If the momentum $\rho u$ is zero, the solution of (\ref{nl_solver}) is trivial
\begin{equation*}
  \rho = \left( \dfrac{\gamma-1}{K\gamma}(\varepsilon-\phi) \right)^\frac{1}{\gamma-1}.
\end{equation*}
For the nonzero momentum $\rho u$, slightly different from the shallow water equations\cite{zhang2023moving}, (\ref{nl_solver}) cannot reduce to a cubic equation solver with explicit expressions thanks to the value of $\gamma$. By introducing the function which is convex for the ideal gas law:
\begin{equation*}
  \xi(\rho) = \dfrac{(\rho u)^2}{2\rho^2}+\dfrac{\gamma}{\gamma-1}K\rho^{\gamma-1},
\end{equation*}
we can get its unique minimum $\xi_{\star}$ at point
\begin{equation}\label{rho_point}
\rho_{\star} = \left(\dfrac{(\rho u)^2}{\gamma K}\right)^\frac{1}{\gamma+1},\quad \xi_{\star} =  \xi(\rho_{\star}).
\end{equation}
The value $\rho_{\star}$  is referred to as the critical density, which separates the solution into a subsonic and supersonic branch.

We are now in a position to claim that
\begin{itemize}
\item If $\varepsilon - \phi<\xi_{\star}$, no equilibrium exists.
\item If $\varepsilon - \phi=\xi_{\star}$, one equilibrium exists and the critical density $\rho_{\star}$ is the solution of  (\ref{nl_solver}). We can also calculate the velocity at the critical point
\begin{equation}\label{u_point}
  u_{\star}^2 = \dfrac{(\rho u)^2}{\rho_{\star}^2} = \gamma K \rho_{\star}^{\gamma-1}. 
\end{equation}
\item If $\varepsilon - \phi>\xi_{\star}$, two equilibrium solutions exist.
\end{itemize}
And in order to label the different flow regimes, we introduce the Mach number
\begin{equation}\label{mach_euler}
 M := \dfrac{|u|}{c}, \quad \text{with} \quad  c^2 = \left(\dfrac{\partial p}{\partial \rho}\right)_s = \gamma K \rho^{\gamma-1}.
\end{equation}
For the subsonic flow where $M <1$, we select the subsonic solution $\rho> \rho_{\star}$, otherwise the supersonic solution $\rho<\rho_{\star}$ in the supersonic flows $M >1$ is chosen. Moreover, from (\ref{u_point}) and the definition of the sound speed $c$ in (\ref{mach_euler}), we can observe the Mach number equals 1 when $\rho=\rho_{\star}$. Similar issues arise in the shallow water equations, and Noelle et al. \cite{noelle2007high} give a thorough discussion. In our calculation, Newton iteration coupled with the analogous initial guess choice in \cite{noelle2007high} is applied to solve (\ref{nl_solver}).
Once a correct root $\rho$ is found, the conservative variables are simply given by
\begin{equation}\label{vu:euler}
\boldsymbol{U} = \boldsymbol{U}(\boldsymbol{V},\phi) = \left(\begin{array}{c}
                \rho(K,\rho u,\varepsilon,\phi) \\
                \rho u\\
                \displaystyle\frac{(\rho u)^2}{2\rho(K,\rho u,\varepsilon,\phi)}+\frac{1}{\gamma-1}K\rho^\gamma
              \end{array}\right).
\end{equation}
As can be seen, the variable transformation between $\boldsymbol{U}$ and $\boldsymbol{V}$ is a one-to-one correspondence under the determined Mach number.

For two-dimensional isentropic hydrostatic equilibria preserving, we define $\boldsymbol U=(\rho,\rho u,\rho v,E)^T$ to be the conservative variables, and the following equilibrium variables $\boldsymbol V=(K,\rho u,\rho v,\varepsilon)$ with
\begin{equation}\label{nl_solver2d}
\varepsilon =\dfrac{1}{2}(u^2+v^2)+h+\phi= \dfrac{(\rho u)^2+(\rho v)^2}{2\rho^2}+\dfrac{\gamma}{\gamma-1}K\rho^{\gamma-1} + \phi.
\end{equation}
A similar variable transform as one dimension case can be taken. Such a definition is reasonable owing to the fact that (\ref{nl_solver2d}) reduces to $\varepsilon=h + \phi$ with zero velocity, which is consistent with the isentropic equilibrium state.

\subsection{Ripa model}\label{subse:ripa}
The shallow water equations with horizontal temperature gradients take the following form in a two-dimensional case
\begin{equation}\label{ripa}
    \left\{\begin{array}{l}
    h_t+(hu)_{x}+(hv)_{y}=0, \\
    (hu)_t+(hu^2+\frac{1}{2}gh^2\theta)_x+(huv)_y=-gh\theta b_x,\\
    (hv)_t+(huv)_x+(hv^2+\frac{1}{2}gh^2\theta)_y=-gh\theta b_y,\\
    (h\theta)_t+(hu\theta)_x+(hv\theta)_y = 0,
    \end{array}\right.
\end{equation}
where $h,b,\theta,g$ stands for the water height, the bottom topography function, the potential temperature field, and the gravitational constant, respectively, $u,v$ is the depth-averaged velocity of the fluid.  Here $\theta = \frac{g\Delta \Theta}{\Theta_{\text{ref}}}$ is the reduced gravity with $\Delta \Theta$ denotes the difference in potential temperature from a reference value $\Theta_{\text{ref}}$. Additionally, $hu,hv$ is the water discharge, and $p = \frac{1}{2}gh^2\theta$ represents the pressure dependent on the water temperature.

\subsubsection{Steady state solutions}
Similar to the shallow water equations, no general form of the moving water equilibria exists in two dimensions. Providing the velocity does not vanish, the well-known moving water equilibrium state solutions in one dimension can be derived as
\begin{equation}\label{moving_water:Ripa}
  hu = \text{constant}, \quad E := \frac{1}{2}u^2 + g\theta(h + b) = \text{constant}, \quad \theta=\text{constant}.
\end{equation}
When the velocity vanishes, the system admits several non-trivial steady-state solutions defined by the following unsolvable PDE system
\begin{equation}\label{ripa_zerov}
    \left\{\begin{array}{l}
    u=0, \ v=0,\\
    \left(\dfrac{1}{2}h^2\theta \right)_x=-h\theta b_x, \left(\dfrac{1}{2}h^2\theta \right)_y=-h\theta b_y.
    \end{array}\right.
\end{equation}
Nevertheless, additional assumptions can be enforced on $\theta,b,h$ to solve (\ref{ripa_zerov}).
The first steady state is the still water equilibrium which is obtained with constant temperature
\begin{equation}\label{still_water:Ripa}
  u = 0,\quad v=0,\quad \theta=\text{constant},\quad h + b = \text{constant}.
\end{equation}
It is a special case of the generalized steady state (\ref{moving_water:Ripa}). The second is the isobaric equilibrium
\begin{equation}\label{isobaric:Ripa}
  u = 0,\quad v=0, \quad b = \text{constant},\quad h^2\theta=\text{constant},
\end{equation}
where the height and temperature jump but velocity and pressure remain constant. The third case is the constant water height equilibrium
\begin{equation}\label{con_water:Ripa}
  u = 0, \quad v=0,\quad h = \text{constant},\quad b+\frac{1}{2}h\ln{\theta}=\text{constant}.
\end{equation}
Such more complicated zero-velocity steady states yield the main discrepancy from the shallow water equations.  All three steady states can be treated within the current framework, only the isobaric steady state (\ref{isobaric:Ripa}) requires special treatment of the numerical flux.

\subsubsection{Conservative and equilibrium variables}
In one dimension, we define the conservative variables $\boldsymbol U=(h,hu,h\theta)^T$. The equilibrium variables for moving water equilibrium are denoted by
\begin{equation}\label{mo_ev:ripa}
\boldsymbol V=\left(\begin{array}{c}
                E \\
                hu\\
                \theta
              \end{array}\right) =
              \left(\begin{array}{c}
                \frac{(hu)^2}{2h^2} + g\theta (h + b) \\
                hu\\
                \frac{h\theta}{h}
              \end{array}\right).
\end{equation}
On the one hand, given $\boldsymbol U$ and the bottom function $b$, the transform function denoted as $\boldsymbol{V} = \boldsymbol{V}(\boldsymbol{U},b)$ can be directly acquired from (\ref{mo_ev:ripa}).
On the other hand, suppose $\boldsymbol V$ and $b$ are given, the inverse transform function
\begin{equation}\label{tr_rel}
\boldsymbol{U} = \boldsymbol{U}(\boldsymbol{V},b) = \left(\begin{array}{c}
                h(E,hu,\theta,b) \\
                hu\\
                h(E,hu,\theta,b)\cdot \theta
              \end{array}\right)
\end{equation}
can be obtained by solving an equivalent cubic polynomial
\begin{equation}\label{cubic:ripa}
   g\theta h^3 - (E-gb\theta)h^2 + \frac{1}{2}(hu)^2 = 0,
\end{equation}
and what we need is to find the correct root of this cubic equation. For this reason, we introduce the Froude number
\begin{equation}\label{fr_ripa}
 Fr := |u|/\sqrt{gh\theta}
\end{equation}
to label the different flow regimes and distinguish between the following three state
\begin{equation*}
\left\{\begin{array}{lll}
    Fr<1,&  \text{subcritical flow}, \\
    Fr=1,&  \text{sonic flow}, \\
    Fr>1,&  \text{supercritical flow}.
    \end{array}\right.
\end{equation*}
It plays the same role as the Mach number (\ref{mach_euler}).
Assuming that the admissible values $E,hu,\theta,b$ satisfy $$E\geq \frac{3}{2}(g\theta|hu|)^{\frac{2}{3}} + gb\theta,$$
we can find the solutions analytically, which read as
\begin{itemize}
\setlength{\parskip}{0.0em}
  \item [1)] If  $hu=0$, the unique physically relevant root is $h = \frac{E}{g\theta} - b$.
  \item [2)] If $hu\neq 0$ and $E>\frac{3}{2}(g\theta|hu|)^{\frac{2}{3}} + gb\theta,$ (\ref{cubic:ripa}) will return three real-valued roots denoted by $$h_1<0<h_2<h_3.$$ They are written explicitly as
             \begin{equation}\label{roothe}
              \begin{aligned}
               &h_1 = \dfrac{E-gb\theta}{3g\theta}\left(1- 2\cos{\dfrac{t}{3}}\right),\\
               &h_2 = \dfrac{E-gb\theta}{3g\theta}\left(1+ \cos{\dfrac{t}{3}} - \sqrt{3} \sin{\dfrac{t}{3}}\right),\\
               &h_3 = \dfrac{E-gb\theta}{3g\theta}\left(1+ \cos{\dfrac{t}{3}} + \sqrt{3} \sin{\dfrac{t}{3}}\right),\\
               &t = \arccos{L}, \ L = -1 + \dfrac{27}{4}\dfrac{g^2\theta^2(hu)^2}{(E-gb\theta)^3}.
              \end{aligned}
             \end{equation}
             Here, the two positive real roots correspond to the supercritical ($h_2$) and subcritical ($h_3$) cases, respectively.
  \item [3)] If $hu\neq 0$ and $E=\frac{3}{2}(g\theta|hu|)^{\frac{2}{3}} + gb\theta,$ (\ref{cubic:ripa}) will return three real-valued roots denoted by $$h_1<0<h_2=h_3.$$
  The unique physically relevant double root is $h_2=h_3= \left(\dfrac{(hu)^2}{g\theta}\right)^{\frac{1}{3}}$, which is the sonic point.
\end{itemize}
It shows that for the Ripa system, the transform relationship between $\boldsymbol{U}$ and $\boldsymbol{V}$ is also a one-to-one correspondence under the determined Froude number.

In two dimensions, the conservative variables $\boldsymbol{U} = (h,hu,hv,h\theta)^T$, the equilibrium variables $\boldsymbol{V} = (E,hu,hv,\theta)^T$, as well as the variable transform are obtained in a similar way. Here $E$ is defined by
\begin{equation}\label{E_solver2d}
E=\dfrac{1}{2}(u^2+v^2) + g\theta (h + b) = \dfrac{(hu)^2+(hv)^2}{2h^2} + g\theta (h + b),
\end{equation}
which will reduce to the still water equilibrium state (\ref{still_water:Ripa}) when the velocities $u,v$ vanish.

\section{The well-balanced discontinuous Galerkin method}\label{se:wb}
This section develops the well-balanced discontinuous Galerkin method for hyperbolic balance laws, in quest of preserving both the hydrostatic and moving equilibrium state. The extension of the developed scheme to the two-dimensional case is only restricted to the hydrostatic equilibria.

\subsection{Notations}\label{subse:no}
For the computational domain $\Omega\in \mathbb{R}^d$, let $\mathcal {K}$ be a family of partitions such that
$$\Omega = \bigcup\{\mathcal {T}|\mathcal {T}\in \mathcal {K}\}.$$
For any element $\mathcal {T} \in \mathcal {K}$, we denote $\Delta_{\mathcal {T}}$ as the area of the cell $\mathcal {T}$.
Specifically speaking, supposing that the computational domain in one dimension is divided into $nx$ subintervals, 
namely the element $\mathcal {T}$ is taken as the cell $\mathcal I_j$ and
$$\Omega = \bigcup   _{j=1}^{nx} \mathcal I_j = \bigcup  _{j=1}^{nx}[x_{j-\frac{1}{2}}, x_{j+\frac{1}{2}}].$$
The $x_{j\pm \frac{1}{2}}$ is the left or right cell interface.
For two-dimensional case, we denote the mesh $\mathcal {K}$ by rectangular cells $\mathcal I_{ij}$, i.e.
$$\Omega = \bigcup  _{i=1}^{nx}\bigcup _{j=1}^{ny}{\mathcal I_{ij}} = \bigcup  _{i=1}^{nx}\bigcup _{j=1}^{ny} [x_{i-\frac{1}{2}},x_{i+\frac{1}{2}}] \times [y_{j-\frac{1}{2}},y_{j+\frac{1}{2}}].$$

We define the following piecewise polynomial finite element space
 \begin{equation}\label{fes1}
\mathcal{W}_h^k:= \left\{ w: w|_{\mathcal {T}}\in P^k(\mathcal {T}), \forall \mathcal {T}\in \mathcal {K}  \right\},
\end{equation}
where $P^k(\mathcal {T})$ denotes the space of polynomials in cell $\mathcal {T}$ of degree at most $k$, and
 \begin{equation}\label{fes2}
\boldsymbol{\mathcal{V}}_h^k:= \left\{ \boldsymbol v: \boldsymbol v = (v_1,\dots, v_{d+2})^T | \ v_1,\dots,v_{d+2} \in  \mathcal{W}_h^k  \right\},
\end{equation}
which consists of $d+2$ components in the $d$-dimensional equation systems we considered.
In the DG methods, for any unknown variable $u$, we project it into the finite element space $\mathcal{W}_h^k$ and still denote the numerical approximation as $u$ with abuse of notation.
The values $u^{int_{\mathcal T}},u^{ext_{\mathcal T}}$ denote the approximations at the cell boundary $\partial {\mathcal T}$, and they are defined by
\begin{equation}\label{ub}
u^{int_{\mathcal T}}(\boldsymbol{x}) := \lim _{\epsilon \rightarrow 0^+} u(\boldsymbol{x} - \epsilon\boldsymbol{n}), \
u^{ext_{\mathcal T}}(\boldsymbol{x}) := \lim _{\epsilon \rightarrow 0^+} u(\boldsymbol{x} + \epsilon\boldsymbol{n}), \ \forall \boldsymbol{x}\in \partial {\mathcal T}.
\end{equation}
Here $\boldsymbol{n}$ denotes the outward unit normal vector.

\subsection{The semi-discrete scheme with hydrostatic reconstruction}
Different from the traditional high-order well-balanced DG methods, instead of seeking an approximation to the conservative variables $\boldsymbol {U}$, we approximate the equilibrium variables $\boldsymbol {V}$ in the DG piecewise polynomial space $\boldsymbol{\mathcal{V}}_h^k$. Similar to the procedure in \cite{zhang2023moving}, for simplicity of presentation, we rewrite the hyperbolic balance laws as
\begin{equation}\label{re_hyper2d}
\begin{aligned}
\boldsymbol {U}(\boldsymbol{V})_t + \nabla \cdot\boldsymbol F(\boldsymbol {U}(\boldsymbol{V})) = \boldsymbol r(\boldsymbol {U}(\boldsymbol{V}),\omega).
 \end{aligned}
 \end{equation}
Here $\omega$ represents the gravitational potential $\phi(x)$ in Euler equations or the bottom topography $b(x)$ in Ripa systems.
Notice that we do not change the form of the equation itself; only the conservative variables are treated as a nonlinear function of the equilibrium variables, thus maintaining the conservation of the hyperbolic conservation laws. 

Based on the hydrostatic reconstruction technology \cite{li2018welldge}, we design our well-balanced semi-discrete DG method, which reads as:
Find $\boldsymbol{V}\in \boldsymbol{\mathcal{V}}_h^k$, s.t. for any test function $\boldsymbol{\varphi} = ({\varphi}_1,\dots,{\varphi}_4)\in \boldsymbol{\mathcal{V}}_h^k$ we have
\begin{equation}\label{scheme2d}
\dfrac{d}{dt}\int_{\mathcal T}\boldsymbol{U}(\boldsymbol{V})\cdot \boldsymbol{\varphi} \ d\boldsymbol{x} = \text{RHS}(\boldsymbol{V},\omega,\boldsymbol{\varphi})
\end{equation}
with
\begin{equation*}
\begin{aligned}
 \text{RHS}(\boldsymbol{V},\omega,\boldsymbol{\varphi})
 =\int_{\mathcal T}\boldsymbol{F}(\boldsymbol{U}(\boldsymbol{V})): \nabla\boldsymbol{\varphi} \ d\boldsymbol{x} -
  \int_{\partial \mathcal T} \widehat{\boldsymbol{F}}^{*} \cdot\boldsymbol{\varphi}^{int_{\mathcal T}} \ ds +\int_{\mathcal T}\boldsymbol r(\boldsymbol{U}(\boldsymbol{V}),\omega) \cdot \boldsymbol{\varphi} \ d\boldsymbol{x}.
 \end{aligned}
\end{equation*}
Here $``\cdot"$ is the Euclidean dot product, $``:"$ is the Frobenius inner product. 
The well-balanced numerical flux $\widehat{\boldsymbol{F}}^{*}$ is computed as follows. At each time step, we first obtain the boundary values by transform relationship (\ref{vu:euler}) or \eqref{tr_rel}
\begin{equation}\label{hy_u}
   {\boldsymbol{U}}^{int_{\mathcal T}} = {\boldsymbol{U}}\left({\boldsymbol{V}}^{int_{\mathcal T}},\omega^{int_{\mathcal T}}\right), \quad
   {\boldsymbol{U}}^{ext_{\mathcal T}} = {\boldsymbol{U}}\left({\boldsymbol{V}}^{ext_{\mathcal T}},\omega^{ext_{\mathcal T}}\right).
\end{equation}
For another, we evaluate their modification
\begin{equation}\label{hy_up}
   {\boldsymbol{U}}^{*,int_{\mathcal T}} = {\boldsymbol{U}}\left({\boldsymbol{V}}^{int_{\mathcal T}},\omega^{*}\right), \quad
   {\boldsymbol{U}}^{*,ext_{\mathcal T}} = {\boldsymbol{U}}\left({\boldsymbol{V}}^{ext_{\mathcal T}},\omega^{*}\right)
\end{equation}
with
\begin{equation}\label{hy_phi}
\begin{aligned}
   & \omega^{*} = \max{(\omega^{int_{\mathcal T}},\omega^{ext_{\mathcal T}})}.
\end{aligned}
\end{equation}
Then, the well-balanced numerical fluxes are taken as
\begin{equation}\label{flux2d}
\begin{aligned}
\widehat{\boldsymbol{F}}^{*}  =  \widehat{\boldsymbol{F}}({\boldsymbol{U}}^{*,int_{\mathcal T}},{\boldsymbol{U}}^{*,ext_{\mathcal T}},\boldsymbol{n}) - \boldsymbol{F}({\boldsymbol{U}}^{*,int_{\mathcal T}})\cdot \boldsymbol{n} + \boldsymbol{F}({\boldsymbol{U}}^{int_{\mathcal T}})\cdot \boldsymbol{n}.
\end{aligned}
\end{equation}
Considering  the stability of the scheme, we choose the simplest monotone Lax-Friedrichs numerical flux for  $\widehat {\boldsymbol {F}}(\boldsymbol U^{int_{\mathcal T}},\boldsymbol U^{ext_{\mathcal T}},\boldsymbol{n})$, that is
\begin{equation}\label{LF_FLUX}
\begin{aligned}
  \widehat {\boldsymbol {F}}(\boldsymbol U^{int_{\mathcal T}},\boldsymbol U^{ext_{\mathcal T}},\boldsymbol{n}) &= \dfrac{1}{2}\left(\boldsymbol  {F}(\boldsymbol U^{int_{\mathcal T}})\cdot \boldsymbol{n}  + \boldsymbol {F}( \boldsymbol U^{ext_{\mathcal T}}) \cdot \boldsymbol{n}  -{\alpha}(\boldsymbol U^{ext_{\mathcal T}} - \boldsymbol U^{int_{\mathcal T}})\right),\\
  \alpha &= \max\limits_{\boldsymbol {U}} \left( \left|\dfrac{\partial \boldsymbol F}{\partial\boldsymbol U} \cdot\boldsymbol n\right|\right),
\end{aligned}
\end{equation}
the maximum is taken globally in our computation. If the isolated flow discontinuities exist, the Roe flux \cite{roe1981approximate} or HLLC flux \cite{toro1994restoration} with carefully chosen waves speeds is applied for the sake of preserving the well-balanced property.

\subsection{Special treatment for the isobaric equilibrium}
Schemes for the moving equilibria preserving fail to maintain the isobaric equilibrium state exactly, even employing a substantial mesh refinement, which will be shown in Example \ref{Ripa:pof}. Therefore, we design a new well-balanced DG method for the isobaric equilibrium state \eqref{isobaric:Ripa}. Note that the isobaric equilibrium preserving DG method does not preserve the moving water equilibrium due to the different equilibrium variables adopted.

We define the conservative variables $\boldsymbol U=(h,hu,hv,h\theta)^T$ and the equilibrium variables $\boldsymbol V=(h,u,v,p)^T$ from the isobaric steady state solution.
The variable transformation between $\boldsymbol U$ and $\boldsymbol V$ is much easier, which reads as
\begin{equation}\label{iso_vt}
\begin{aligned}
  \boldsymbol{V} = \boldsymbol{V}(\boldsymbol{U}) =
  \left(\begin{array}{c}
          h \\
         \displaystyle \frac{hu}{h} \\
       \displaystyle   \frac{hv}{h} \\
         \displaystyle \frac{g}{2}h\cdot h\theta
        \end{array}
  \right), \quad
  \boldsymbol{U} = \boldsymbol{U}(\boldsymbol{V}) =
  \left(\begin{array}{c}
          h \\
          h\cdot u \\
          h\cdot v \\
        \displaystyle  \frac{2}{g}\frac{p}{h}
  \end{array}
  \right).
\end{aligned}
\end{equation}
Based on the approximation of the equilibrium variables in the DG piecewise polynomial space, we get the following well-balanced semi-discrete DG scheme: Find $\boldsymbol{V}\in \boldsymbol{\mathcal{V}}_h^k$, s.t. for any test function $\boldsymbol{\varphi}\in \boldsymbol{\mathcal{V}}_h^k$ we have
\begin{equation}\label{scheme:ripaiso2d}
\dfrac{d}{dt}\int_{\mathcal T}\boldsymbol{U}(\boldsymbol{V})\cdot \boldsymbol{\varphi} \ d\boldsymbol{x} = \text{RHS}^{\text{iso}}(\boldsymbol{V},\omega,\boldsymbol{\varphi}),
\end{equation}
where
\begin{equation*}
\text{RHS}^{\text{iso}}(\boldsymbol{V},\omega,\boldsymbol{\varphi})
 =\int_{\mathcal T}\boldsymbol{F}(\boldsymbol{U}(\boldsymbol{V})): \nabla\boldsymbol{\varphi} \ d\boldsymbol{x} -
  \int_{\partial \mathcal T} \widehat{\boldsymbol{F}}^{\text{mod}} \cdot\boldsymbol{\varphi}^{int_{\mathcal T}} \ ds +\int_{\mathcal T}\boldsymbol r(\boldsymbol{U}(\boldsymbol{V}),\omega) \cdot \boldsymbol{\varphi} \ d\boldsymbol{x}.
\end{equation*}
Inspired from \cite{dong2022well}, we make a modification to the Lax-Friedrichs numerical flux \eqref{LF_FLUX} by introducing a parameter
\begin{equation}\label{delta}
\begin{aligned}
   & \delta= \max\left( \dfrac{-\min(\boldsymbol {u}^{int_{\mathcal T}} \cdot\boldsymbol n,0)}{\max(|\boldsymbol {u}^{int_{\mathcal T}} \cdot\boldsymbol n|,1)},  \dfrac{\max(\boldsymbol {u}^{ext_{\mathcal T}} \cdot\boldsymbol n,0)}{\max(|\boldsymbol {u}^{ext_{\mathcal T}} \cdot\boldsymbol n|,1)} \right).
\end{aligned}
\end{equation}
Some straightforward calculation yields $\delta\in [0,1]$.
Hence, the modified well-balanced numerical flux becomes
\begin{equation}\label{flux:ripaiso2d}
\begin{aligned}
&\widehat{\boldsymbol{F}}^{\text{mod}} = \widehat{\boldsymbol{F}}^{\text{mod}}(\boldsymbol U^{int_{\mathcal T}},\boldsymbol U^{ext_{\mathcal T}},\boldsymbol{n}) = \dfrac{1}{2}\left(\boldsymbol  {F}(\boldsymbol U^{int_{\mathcal T}})\cdot \boldsymbol{n}  + \boldsymbol {F}( \boldsymbol U^{ext_{\mathcal T}}) \cdot \boldsymbol{n}  -{\alpha}\delta(\boldsymbol U^{ext_{\mathcal T}} - \boldsymbol U^{int_{\mathcal T}})\right),
\end{aligned}
\end{equation}
with the boundary values $\boldsymbol U^{int_{\mathcal T}},\boldsymbol U^{ext_{\mathcal T}}$ obtained from the variable transformation \eqref{iso_vt}. This modification does not affect accuracy.

\subsection{The fully-discrete DG scheme}
For hyperbolic conservation laws, the semi-discrete scheme is usually completed by combining high-order strong stability preserving Runge-Kutta (SSP-RK) or multistep time discretization methods. Herein, we apply the following third-order SSP-RK  method for our time discretization:
\begin{equation}\label{3rk}
\begin{aligned}
& \int_{\mathcal {T}} \boldsymbol{U}(\boldsymbol{V}^{(1)}) \cdot \boldsymbol{\varphi} \ dx = \int_{\mathcal T} \boldsymbol{U}(\boldsymbol{V}^{n}) \cdot \boldsymbol{\varphi} \ dx + \Delta t  \ \text{RHS}(\boldsymbol{V}^n,\omega,\boldsymbol{\varphi}), \\
& \int_{\mathcal {T}} \boldsymbol{U}(\boldsymbol{V}^{(2)}) \cdot \boldsymbol{\varphi} \ dx = \dfrac{3}{4}\int_{\mathcal T} \boldsymbol{U}(\boldsymbol{V}^{n}) \cdot \boldsymbol{\varphi} \ dx + \dfrac{1}{4}\left(\int_{\mathcal T} \boldsymbol{U}(\boldsymbol{V}^{(1)}) \cdot \boldsymbol{\varphi} \ dx + \Delta t  \ \text{RHS}(\boldsymbol{V}^{(1)},\omega,\boldsymbol{\varphi}) \right),\\
& \int_{\mathcal {T}} \boldsymbol{U}(\boldsymbol{V}^{n+1}) \cdot \boldsymbol{\varphi} \ dx = \dfrac{1}{3}\int_{\mathcal T} \boldsymbol{U}(\boldsymbol{V}^{n}) \cdot \boldsymbol{\varphi} \ dx + \dfrac{2}{3}\left(\int_{\mathcal T} \boldsymbol{U}(\boldsymbol{V}^{(2)}) \cdot \boldsymbol{\varphi} \ dx + \Delta t  \ \text{RHS}(\boldsymbol{V}^{(2)},\omega,\boldsymbol{\varphi}) \right),
\end{aligned}
\end{equation}
where $\text{RHS}(\boldsymbol{V}^n,\omega,\boldsymbol{\varphi})$ is the spatial operator in (\ref{scheme2d}) or \eqref{scheme:ripaiso2d}. The caveat is that Newton iteration is necessary for solving a nonlinear system of equations at each inner stage of Runge-Kutta time stepping since the solutions are in terms of the equilibrium variables $\boldsymbol{V}$ which are nonlinear with respect to $\boldsymbol{U}$.
Taking Euler forward as an example, see the first equation in (\ref{3rk}),
\begin{equation}\label{1rk}
 \int_{\mathcal {T}} \boldsymbol{U}(\boldsymbol{V}^{n+1})\cdot \boldsymbol{\varphi} \ dx = \mathcal{L}_{\mathcal {T}}(\boldsymbol{V}^{n},\boldsymbol{\varphi})
\end{equation}
with
\begin{equation}\label{res}
 \mathcal{L}_{\mathcal {T}}(\boldsymbol{V}^{n},\boldsymbol{\varphi}) = \int_{\mathcal {T}} \boldsymbol{U}(\boldsymbol{V}^{n}) \cdot \boldsymbol{\varphi} \ dx + \Delta t  \ \text{RHS}(\boldsymbol{V}^n,\omega,\boldsymbol{\varphi}).
\end{equation}
We denote $\mathcal{L}_{\mathcal {T}}^{[l]} = \mathcal{L}_{\mathcal {T}}(\boldsymbol{V}^{n},{\varphi}_l \boldsymbol {e}_l)$ as the right hand side of equations satisfied by the $l$-th equilibrium variable to be solved, here 
$\boldsymbol {e}_l:=(0,\dots,0,\mathop{1}\limits_{\mathop{\uparrow}\limits_{l\text{-th}}},0,\dots,0)^T$
represents the unit vector. 
For Euler systems, the momentum vector $\rho \boldsymbol u$ defined in equilibrium variables can be acquired directly from the second and third components of (\ref{1rk}). The coefficients of the polynomial $K^{n+1}, \varepsilon^{n+1} \in \mathcal{W}_h^k$ are obtained by iteratively solving the nonlinear systems
\begin{equation}\label{newiter}
\begin{aligned}
& G_1(K^1,\dots,K^l,\varepsilon^1,\dots,\varepsilon^l)=\displaystyle{\int_{\mathcal {T}} \rho\left(\sum _{i=1}^l K^i \varphi_1^i,\sum _{i=1}^l \varepsilon^i \varphi_4^i,\rho \boldsymbol u^{n+1},\omega \right) {\varphi}_1 \ dx} - \mathcal{L}_{\mathcal {T}}^{[1]}=0,\\
& G_2(K^1,\dots,K^l,\varepsilon^1,\dots,\varepsilon^l)=\displaystyle{\int_{\mathcal {T}} E\left(\sum _{i=1}^l K^i \varphi_1^i,\sum _{i=1}^l \varepsilon^i \varphi_4^i,\rho \boldsymbol u^{n+1},\omega \right) {\varphi}_4 \ dx} - \mathcal{L}_{\mathcal {T}}^{[4]}=0.
\end{aligned}
\end{equation}
Here $l$ denotes the number of polynomial basis functions of degree $k$, and $l=k+1$ in one-dimensional and $(k+1)(k+2)/2$ in two-dimensional. The same is true for Ripa model.
For more details and a good initial guess from which Newton iteration starts, we refer to \cite{zhang2023moving} for a thorough discussion.

Slope limiters might be needed for the DG methods to control spurious numerical oscillations when containing discontinuous solutions. In our experiments, we adopt total variation bounded (TVB) limiter \cite{cockburn1989tvb3,cockburn1998runge5}.
In order not to destroy the moving equilibrium state in a one-dimensional case and hydrostatic equilibria in two dimensions, we apply the limiter procedure on the local characteristic fields of the equilibrium function $\boldsymbol {V}(\boldsymbol {x})$ with the same modification as \cite{zhang2023moving}.
Specifically, the TVB limiter procedure involves two parts. First, we check whether the limiter is needed in each cell based on the equilibrium variables $\boldsymbol{V}$ thanks to the equilibrium variables in the DG piecewise polynomial space at each time step.  Then, we apply this limiter for those  `trouble' cells on the polynomials $\boldsymbol{V}(\boldsymbol{x})$. When the steady state $\boldsymbol{V}=\text{constant}$ is reached, it indicates no limiting is needed. So the limiting procedure will not destroy the well-balanced property.

\begin{remark}
Minor modification exists for the Ripa model. Taking one-dimensional as an instance, the Jacobian ${\boldsymbol A}_{j+1/2}$ in \cite{cockburn1989tvb3} is replaced by
\begin{equation*}
\begin{aligned}
{\boldsymbol A}_{j+1/2}=\left[  \dfrac{\partial \boldsymbol V}{\partial\boldsymbol W}
                                \dfrac{\partial \boldsymbol F}{\partial\boldsymbol W}
                          \left(\dfrac{\partial \boldsymbol V}{\partial\boldsymbol W}\right)^{-1} \right]_{\boldsymbol U= \boldsymbol U_{j+1/2}},
\end{aligned}
\end{equation*}
where $\boldsymbol W = (h,hu,\theta)^T$ and $\boldsymbol U= (h,hu,h\theta)^T$ is our conservative variable. The average state $\boldsymbol {U}_{j+1/2} $ is chosen as the simple mean of the left and right values. The two-dimensional case can be dealt with in a similar approach.
\end{remark}

\begin{remark}
Despite the scheme in \cite{mantri2021well} being still in terms of equilibrium variables, a linear system of equations for $\boldsymbol V_t$ obtained by chain rule yields the loss of mass conservation. In our proposed approach, we can preserve the conservation of the hyperbolic balance laws. The formulation of the target equation itself does not change and we still solve the conservative form of the problem, leading to the locally nonlinear iteration in each cell. Moreover, in contrast to the scheme \cite{xing2014exactly}, our method is easily accomplished without the reference equilibrium state recovery and a special source term approximation.
\end{remark}


\subsection{Well-balanced property}
We outline the well-balanced property for the semi-discrete DG method in the following theorem.
\begin{theorem}\label{wbp_semi}
The semi-discrete DG method (\ref{scheme2d}) with hydrostatic reconstruction numerical flux (\ref{flux2d}) is exact for one-dimensional moving equilibria and two-dimensional hydrostatic equilibrium states.
\end{theorem}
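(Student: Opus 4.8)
The plan is to show that once the discrete solution $\boldsymbol V$ coincides with an equilibrium, the spatial operator vanishes: $\text{RHS}(\boldsymbol V,\omega,\boldsymbol\varphi)=0$ for every test function $\boldsymbol\varphi\in\boldsymbol{\mathcal V}_h^k$, whence $\tfrac{d}{dt}\int_{\mathcal T}\boldsymbol U(\boldsymbol V)\cdot\boldsymbol\varphi\,d\boldsymbol x=0$ and the state is frozen in time. The decisive structural observation is that at each equilibrium of interest the \emph{equilibrium} variables are spatially constant: for the one-dimensional moving state $\boldsymbol V=(K,\rho u,\varepsilon)^T$ is constant by (\ref{adi:euler}) (and $\boldsymbol V=(E,hu,\theta)^T$ by (\ref{moving_water:Ripa}) for Ripa), while for the two-dimensional hydrostatic state $\boldsymbol V$ is constant with the momenta set to zero by (\ref{isen:euler}) (resp. (\ref{still_water:Ripa})). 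A constant field lies in $\boldsymbol{\mathcal V}_h^k$ and is globally continuous, so it is represented exactly and, crucially, $\boldsymbol V^{int_{\mathcal T}}=\boldsymbol V^{ext_{\mathcal T}}$ across every interface. This single fact is what the change of variables buys us.

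First I would analyze the interface flux. Since $\boldsymbol V^{int_{\mathcal T}}=\boldsymbol V^{ext_{\mathcal T}}$ and both modified states in (\ref{hy_up}) are evaluated with the \emph{same} potential $\omega^{*}$ from (\ref{hy_phi}), the transform $\boldsymbol U(\cdot,\omega^{*})$ returns $\boldsymbol U^{*,int_{\mathcal T}}=\boldsymbol U^{*,ext_{\mathcal T}}$. Consistency of the Lax--Friedrichs flux (\ref{LF_FLUX}) then annihilates its dissipation, so $\widehat{\boldsymbol F}(\boldsymbol U^{*,int_{\mathcal T}},\boldsymbol U^{*,ext_{\mathcal T}},\boldsymbol n)=\boldsymbol F(\boldsymbol U^{*,int_{\mathcal T}})\cdot\boldsymbol n$, and the hydrostatic-reconstruction flux (\ref{flux2d}) telescopes to $\widehat{\boldsymbol F}^{*}=\boldsymbol F(\boldsymbol U^{int_{\mathcal T}})\cdot\boldsymbol n$, the pure interior physical flux. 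This is exactly the purpose of the correction $-\boldsymbol F(\boldsymbol U^{*,int_{\mathcal T}})+\boldsymbol F(\boldsymbol U^{int_{\mathcal T}})$, and because only consistency was used, the same conclusion holds for any consistent flux (Roe, HLLC), as the paper claims.

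Next I would integrate the volume flux term by parts on $\mathcal T$. Using that inside the cell $\boldsymbol U(\boldsymbol V,\omega(\boldsymbol x))$ reproduces the exact smooth steady state, $\int_{\mathcal T}\boldsymbol F(\boldsymbol U(\boldsymbol V)):\nabla\boldsymbol\varphi\,d\boldsymbol x=-\int_{\mathcal T}\nabla\cdot\boldsymbol F(\boldsymbol U(\boldsymbol V))\cdot\boldsymbol\varphi\,d\boldsymbol x+\int_{\partial\mathcal T}(\boldsymbol F(\boldsymbol U^{int_{\mathcal T}})\cdot\boldsymbol n)\cdot\boldsymbol\varphi^{int_{\mathcal T}}\,ds$. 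Substituting the flux identity from the previous step cancels this boundary integral against $-\int_{\partial\mathcal T}\widehat{\boldsymbol F}^{*}\cdot\boldsymbol\varphi^{int_{\mathcal T}}\,ds$, leaving $\text{RHS}=\int_{\mathcal T}\big(-\nabla\cdot\boldsymbol F(\boldsymbol U)+\boldsymbol r(\boldsymbol U,\omega)\big)\cdot\boldsymbol\varphi\,d\boldsymbol x$. I would then close the argument by invoking that the defining equilibrium relations (constant entropy, constant momentum, Bernoulli's constant for the moving state; $\nabla p=-\rho\nabla\phi$ for the hydrostatic state) are precisely the steady balance $\nabla\cdot\boldsymbol F(\boldsymbol U)=\boldsymbol r(\boldsymbol U,\omega)$ established in Section \ref{se:mo}, so the integrand vanishes pointwise and $\text{RHS}=0$.

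The main obstacle I anticipate is the integration-by-parts step: one must justify that the nonlinear reconstruction $\boldsymbol U(\boldsymbol V,\omega(\boldsymbol x))$ really is the genuine steady-state profile inside each cell, so that the residual $\nabla\cdot\boldsymbol F-\boldsymbol r$ vanishes \emph{everywhere} and not merely in a projected or quadrature-averaged sense. This is clean for an exact equilibrium—well-balancedness here is an exact-arithmetic statement—but it hinges on feeding the exact potential $\omega$ into the transform rather than a polynomial surrogate, and on the equilibrium variables having been selected so that holding them constant is equivalent to solving the steady PDE. The remaining work is the case-by-case verification of this equivalence for each listed state (the isobaric equilibrium is excluded here, as it relies on the separate modified flux $\widehat{\boldsymbol F}^{\text{mod}}$ in (\ref{flux:ripaiso2d})), which is routine bookkeeping given Section \ref{se:mo}.
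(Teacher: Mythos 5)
Your proposal is correct and follows essentially the same route as the paper's proof: constancy of the equilibrium variables gives $\boldsymbol U^{*,int_{\mathcal T}}=\boldsymbol U^{*,ext_{\mathcal T}}$, consistency of the flux collapses $\widehat{\boldsymbol F}^{*}$ to $\boldsymbol F(\boldsymbol U^{int_{\mathcal T}})\cdot\boldsymbol n$, and integration by parts reduces $\text{RHS}$ to the steady balance $\nabla\cdot\boldsymbol F=\boldsymbol r$, which vanishes. The only point the paper adds that you gloss over is the discontinuous-equilibrium case, where a stationary shock sits exactly on a cell interface and the identity $\widehat{\boldsymbol F}^{*}=\boldsymbol F(\boldsymbol U^{int_{\mathcal T}})\cdot\boldsymbol n$ follows from the exactness of the Roe/HLLC Riemann solver rather than from mere consistency.
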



\begin{proof}
We suppose that the initial values are in an exact equilibrium state, that is $\boldsymbol{V}(\boldsymbol{x}) = \text{constant} := c$ with $\boldsymbol{x}\in \mathbb{R}^d, d=1,2$. Here we comment that for a two-dimensional isentropic hydrostatic equilibrium state, (\ref{nl_solver2d}) reduces to $\varepsilon = h+\phi$, and the equilibrium variables remain constant. The same is true for the still water equilibrium state. It is straightforward that the approximation denoted by $\boldsymbol {V}(\boldsymbol{x})$ again is constant and equals $c$ as well.
From the updated boundary values (\ref{hy_up}), we have $\boldsymbol U^{*,int_{\mathcal T}} = \boldsymbol U^{*,ext_{\mathcal T}}$. Plugging back into (\ref{flux2d}) together with the consistent property of the numerical flux, we get
\begin{equation}\label{fwb}
\begin{aligned}
 \widehat{\boldsymbol{F}}^{*}  &= \widehat{\boldsymbol{F}}({\boldsymbol{U}}^{*,int_{\mathcal T}},{\boldsymbol{U}}^{*,int_{\mathcal T}},\boldsymbol{n}) - \boldsymbol{F}({\boldsymbol{U}}^{*,int_{\mathcal T}})\cdot \boldsymbol{n} + \boldsymbol{F}({\boldsymbol{U}}^{int_{\mathcal T}})\cdot \boldsymbol{n} = \boldsymbol F(\boldsymbol U^{int_{\mathcal T}})\cdot \boldsymbol{n}.
\end{aligned}
\end{equation}
For the stationary shocks separating the smooth regions are all located at the cell boundaries, the same results $ \widehat{\boldsymbol{F}}^{*} = \boldsymbol F(\boldsymbol U^{int_{\mathcal T}})\cdot \boldsymbol{n}$ can be obtained thanks to the exact Riemann solver by Roe flux or HLLC flux.
Hence we have
\begin{equation*}
\begin{aligned}
\text{RHS} &= \int_{\mathcal T}\boldsymbol{F}(\boldsymbol{U}(\boldsymbol{V})): \nabla\boldsymbol{\varphi} \ d\boldsymbol{x} -
  \int_{\partial \mathcal T} \widehat{\boldsymbol{F}}^{*} \cdot\boldsymbol{\varphi}^{int_{\mathcal T}} \ ds +\int_{\mathcal T}\boldsymbol r(\boldsymbol{U}(\boldsymbol{V}),\omega) \cdot \boldsymbol{\varphi} \ d\boldsymbol{x} \\
& =  \int_{\mathcal {T}}\boldsymbol{F}(\boldsymbol{U}(\boldsymbol{V})): \nabla\boldsymbol{\varphi} \ d\boldsymbol{x} -
\int_{\partial \mathcal T} \left(\boldsymbol F(\boldsymbol U^{int_{\mathcal T}})\cdot \boldsymbol{n}\right) \cdot \boldsymbol{\varphi}^{int_{\mathcal T}} \ ds
+ \int_{\mathcal {T}}\boldsymbol r(\boldsymbol{U}(\boldsymbol{V}),\omega)\cdot \boldsymbol{\varphi} \ d\boldsymbol{x} \\
& = -\int_{\mathcal {T}} (\nabla \cdot\boldsymbol{F}(\boldsymbol{U}(\boldsymbol{V}))) \cdot \boldsymbol{\varphi} \ d\boldsymbol{x}+\int_{I_j}\boldsymbol r(\boldsymbol{U}(\boldsymbol{V}),\omega)\cdot \boldsymbol{\varphi} \ d\boldsymbol{x} \\
& = -\int_{\mathcal {T}} \left(\nabla \cdot \boldsymbol{F}(\boldsymbol{U}(\boldsymbol{V})) - \boldsymbol r(\boldsymbol{U}(\boldsymbol{V}),\omega)\right)\cdot \boldsymbol{\varphi} \ d\boldsymbol{x}=0.
\end{aligned}
\end{equation*}
The last equality follows from the equilibrium state solution.  This completes the proof of well-balanced property.
\end{proof}

\begin{theorem}
For the Ripa model, the semi-discrete DG method (\ref{scheme:ripaiso2d})-(\ref{flux:ripaiso2d}) is exact for the isobaric equilibrium state (\ref{isobaric:Ripa}).
\end{theorem}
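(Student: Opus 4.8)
The plan is to mirror the integration-by-parts argument of Theorem~\ref{wbp_semi}, with the decisive new ingredient being the collapse of the dissipation parameter $\delta$ at zero-velocity states. First I would assume the discrete solution sits exactly in the isobaric equilibrium \eqref{isobaric:Ripa}. In the equilibrium variables $\boldsymbol V=(h,u,v,p)^T$ of \eqref{iso_vt} this means $u_h\equiv 0$, $v_h\equiv 0$, and $p_h\equiv\text{const}$ (all exactly representable in $\mathcal W_h^k$), while the height $h_h$ is permitted to be a nonconstant polynomial; note that storing $p$ as an equilibrium variable forces $h^2\theta=\tfrac{2p}{g}$ to be constant pointwise through \eqref{iso_vt}, independent of $h_h$. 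This is the crucial departure from Theorem~\ref{wbp_semi}: not every equilibrium variable is constant, so $h$ — and hence $h\theta$ — jumps across cell interfaces, and a naive Lax--Friedrichs penalty would not vanish.

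The hard part, and the whole reason the modified flux \eqref{flux:ripaiso2d} is introduced, is to show that the dissipation nonetheless disappears. I would evaluate $\delta$ from \eqref{delta} at this state: since $\boldsymbol u^{int_{\mathcal T}}=\boldsymbol u^{ext_{\mathcal T}}=\boldsymbol 0$, both $\boldsymbol u^{int_{\mathcal T}}\cdot\boldsymbol n$ and $\boldsymbol u^{ext_{\mathcal T}}\cdot\boldsymbol n$ vanish, each argument of the outer maximum equals $0$, and hence $\delta=0$. Consequently the term $\alpha\delta(\boldsymbol U^{ext_{\mathcal T}}-\boldsymbol U^{int_{\mathcal T}})$ in \eqref{flux:ripaiso2d} is annihilated \emph{even though} $\boldsymbol U^{ext_{\mathcal T}}-\boldsymbol U^{int_{\mathcal T}}\neq\boldsymbol 0$, because of the jump in $h$ and $h\theta$. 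With the unmodified flux ($\delta\equiv 1$) this term would be proportional to those jumps and would break the balance; this is exactly the obstacle that the parameter $\delta$ removes.

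Once $\delta=0$, the flux reduces to the central average $\widehat{\boldsymbol F}^{\text{mod}}=\tfrac12\bigl(\boldsymbol F(\boldsymbol U^{int_{\mathcal T}})\cdot\boldsymbol n+\boldsymbol F(\boldsymbol U^{ext_{\mathcal T}})\cdot\boldsymbol n\bigr)$. Next I would observe that for $u=v=0$ the only nonzero entries of $\boldsymbol F(\boldsymbol U)\cdot\boldsymbol n$ are the momentum components $p\,n_x$ and $p\,n_y$; since $p^{int_{\mathcal T}}=p^{ext_{\mathcal T}}=p$, the interior and exterior physical fluxes agree, so $\widehat{\boldsymbol F}^{\text{mod}}=\boldsymbol F(\boldsymbol U^{int_{\mathcal T}})\cdot\boldsymbol n$, exactly as in \eqref{fwb}.

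Finally I would integrate by parts in $\text{RHS}^{\text{iso}}$, combining the volume flux term with the surface term to obtain $\text{RHS}^{\text{iso}}=-\int_{\mathcal T}\bigl(\nabla\cdot\boldsymbol F(\boldsymbol U(\boldsymbol V))-\boldsymbol r\bigr)\cdot\boldsymbol\varphi\,d\boldsymbol x$, and then verify that the integrand vanishes pointwise. For $u=v=0$ the momentum fluxes equal the constant $p$, so their divergence is zero, while the mass and temperature fluxes vanish identically; and since $b\equiv\text{const}$ in \eqref{isobaric:Ripa}, the source $\boldsymbol r=(0,-gh\theta b_x,-gh\theta b_y,0)^T$ is identically zero. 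Hence $\text{RHS}^{\text{iso}}=0$, and the scheme preserves the isobaric equilibrium exactly.
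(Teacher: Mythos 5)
Your proposal is correct and follows essentially the same route as the paper: both hinge on $\delta=0$ at zero velocity killing the dissipation term despite the nonzero jumps in $h$ and $h\theta$, on the source vanishing because $b$ is constant, and on the constancy of $p$ making the remaining momentum-flux contributions cancel. The only cosmetic difference is that you integrate by parts back to the strong-form residual $\nabla\cdot\boldsymbol F-\boldsymbol r=0$, whereas the paper pulls the constant $p$ out and applies the divergence theorem to the test function componentwise; these are the same computation.
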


\begin{proof}
Assuming that the initial values are in an exact equilibrium state (\ref{isobaric:Ripa}), the velocity $\boldsymbol u$ vanishes.
From the definition \eqref{delta}, we have $\delta=0$. Hence the numerical flux \eqref{flux:ripaiso2d} reduces to
\begin{equation*}
\widehat{\boldsymbol{F}}^{\text{mod}} = \dfrac{1}{2}\left(\boldsymbol  {F}(\boldsymbol U^{int_{\mathcal T}})\cdot \boldsymbol{n}  + \boldsymbol {F}( \boldsymbol U^{ext_{\mathcal T}}) \cdot \boldsymbol{n}\right),
\end{equation*}
and owing to the constant bottom function in an equilibrium state, we have
\begin{equation}\label{rhs_iso}
\begin{aligned}
\text{RHS}^{\text{iso}}
  = \int_{\mathcal T}\boldsymbol{F}(\boldsymbol{U}(\boldsymbol{V})): \nabla\boldsymbol{\varphi} \ d\boldsymbol{x} -
  \int_{\partial \mathcal T} \dfrac{1}{2}\left(\boldsymbol  {F}(\boldsymbol U^{int_{\mathcal T}})\cdot \boldsymbol{n}  + \boldsymbol {F}( \boldsymbol U^{ext_{\mathcal T}}) \cdot \boldsymbol{n}\right) \cdot \boldsymbol{\varphi}^{int_{\mathcal T}} \ ds.
\end{aligned}
\end{equation}
We can rewrite \eqref{rhs_iso} to be the following four parts
$$\text{RHS}^{\text{iso}} = \sum_{l=1}^4\text{RHS}^{\text{iso},[l]} :=  \sum_{l=1}^4\text{RHS}^{\text{iso}}(\boldsymbol{V},\omega,{\varphi}_l \boldsymbol {e}_l).$$
It is trivial that the first and fourth components of $\text{RHS}^{\text{iso}}$ are equal to zero. We denote the outward unit normal vector $\boldsymbol{n}=(n_x,n_y).$ Using the fact that $p(\boldsymbol x) = \text{constant} := c$ and the approximation denoted by ${p}(\boldsymbol x)$ again is constant and equals $c$ as well, we can obtain
\begin{equation*}
\begin{aligned}
  \text{RHS}^{\text{iso},[2]}
  &=\int_{\mathcal T}\boldsymbol{F}^{[2]}(\boldsymbol{U}(\boldsymbol{V})): \nabla{\varphi}_2 \ d\boldsymbol{x} -
  \int_{\partial \mathcal T} \dfrac{1}{2}\left(\boldsymbol  {F}^{[2]}(\boldsymbol U^{int_{\mathcal T}})\cdot \boldsymbol{n}  + \boldsymbol {F}^{[2]}( \boldsymbol U^{ext_{\mathcal T}}) \cdot \boldsymbol{n}\right){\varphi}_2^{int_{\mathcal T}} \ ds\\
  &= \int_{\mathcal T} p({\varphi_2})_x \ d\boldsymbol{x} - \int_{\partial \mathcal T} \dfrac{1}{2}(p^{int_{\mathcal T}} n_x + p^{ext_{\mathcal T}} n_x){\varphi_2}^{int_{\mathcal T}} \ ds \\
  &= c\left(\int_{\mathcal T} (\varphi_2)_x \ d\boldsymbol{x} - \int_{\partial \mathcal T} n_x {\varphi}_2^{int_{\mathcal T}} \ ds\right) = 0.
\end{aligned}
\end{equation*}
The superscript $\mbox{}^{[2]}$ means the second component of the fluxes. The second equality is due to the steady state $\boldsymbol u=0$, and the last equality follows from the divergence theorem. Similarly,
\begin{equation*}
\begin{aligned}
  \text{RHS}^{\text{iso},[3]}
  &= \int_{\mathcal T} p (\varphi_3)_y \ d\boldsymbol{x} - \int_{\partial \mathcal T} \dfrac{1}{2}(p^{int_{\mathcal T}} n_y + p^{ext_{\mathcal T}} n_y){\varphi}_3^{int_{\mathcal T}} \ ds \\
  &= c\left(\int_{\mathcal T} (\varphi_3)_y \ d\boldsymbol{x} - \int_{\partial \mathcal T} n_y {\varphi}_3^{int_{\mathcal T}} \ ds\right) = 0.
\end{aligned}
\end{equation*}
 This completes the proof of well-balanced property.
\end{proof}

The same property holds for the fully-discrete DG method. Some trivial induction gives rise to the following property, and we omit the proof here.
\begin{theorem}\label{wbp_fully}
The fully-discrete DG method,  space discretization (\ref{scheme2d})-(\ref{flux2d})  together with the third-order SSP-RK time discretization (\ref{3rk}), is exact for one-dimensional moving equilibria and two-dimensional hydrostatic equilibrium state.
\end{theorem}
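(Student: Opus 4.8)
The plan is to proceed by induction over the three stages of the SSP-RK3 scheme \eqref{3rk}, using Theorem~\ref{wbp_semi} as the single engine: whenever a stage value is the exact equilibrium constant, the spatial operator $\text{RHS}(\cdot,\omega,\boldsymbol{\varphi})$ evaluated on it vanishes for every test function $\boldsymbol{\varphi}\in\boldsymbol{\mathcal{V}}_h^k$. First I would record the initialization exactly as in the semi-discrete proof: if the data lie in an equilibrium state, then $\boldsymbol{V}^n=c$ is reproduced exactly in $\boldsymbol{\mathcal{V}}_h^k$, because the constant $c$ already belongs to the polynomial space, so its approximation is $c$ itself.

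For the inductive step I would dispatch the three stages in turn. Suppose every input available at a given stage equals $c$. By Theorem~\ref{wbp_semi} the corresponding $\text{RHS}$ term is zero, so the stage equation collapses to a weighted combination of the tested quantities $\int_{\mathcal{T}}\boldsymbol{U}(c,\omega)\cdot\boldsymbol{\varphi}\,d\boldsymbol{x}$. Since the SSP-RK3 weights at each stage sum to one ($\tfrac34+\tfrac14=1$ and $\tfrac13+\tfrac23=1$), the right-hand side is again exactly $\int_{\mathcal{T}}\boldsymbol{U}(c,\omega)\cdot\boldsymbol{\varphi}\,d\boldsymbol{x}$ for all $\boldsymbol{\varphi}$. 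I would then observe that $\boldsymbol{V}^{(\cdot)}=c$ manifestly satisfies this identity, so the constant equilibrium solves the stage system; feeding $\boldsymbol{V}^{(1)}=c$ into stage two and then $\boldsymbol{V}^{(2)}=c$ into stage three propagates the property to $\boldsymbol{V}^{n+1}=c$. Note that the same argument applies verbatim to the two-dimensional hydrostatic case, where \eqref{nl_solver2d} reduces to $\varepsilon=h+\phi$ and the equilibrium variables are again constant.

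The main obstacle, and the only place where this is more than bookkeeping, is justifying that $\boldsymbol{V}^{(\cdot)}=c$ is the solution actually \emph{produced} at each stage, rather than merely \emph{a} solution. Because $\boldsymbol{U}(\boldsymbol{V},\omega)$ is nonlinear in $\boldsymbol{V}$, each stage requires solving a nonlinear algebraic system for the coefficients of $\boldsymbol{V}^{(\cdot)}$ by Newton iteration, so one must rule out a spurious non-equilibrium root. I would close this gap by invoking the one-to-one correspondence between $\boldsymbol{U}$ and $\boldsymbol{V}$ under a fixed Mach number \eqref{mach_euler} (respectively Froude number \eqref{fr_ripa}) established in Section~\ref{se:mo}: on the selected branch the map $\boldsymbol{V}\mapsto\boldsymbol{U}(\boldsymbol{V},\omega)$ is a bijection, hence the induced map on the finite-dimensional coefficient space is locally invertible near $c$, and the Newton iteration initialized as in \cite{zhang2023moving} converges to this unique physically relevant root. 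With uniqueness in hand the induction closes, $\boldsymbol{V}^{n+1}=c$, and the fully-discrete scheme preserves the one-dimensional moving equilibria and two-dimensional hydrostatic equilibria exactly.
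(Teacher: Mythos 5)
Your proposal is correct and follows exactly the route the paper intends: the paper omits the proof of Theorem~\ref{wbp_fully}, describing it as a ``trivial induction'' over the SSP-RK stages using Theorem~\ref{wbp_semi}, which is precisely your argument. Your additional care in justifying that the constant equilibrium is the root actually \emph{selected} by the stagewise nonlinear solve (via the one-to-one correspondence between $\boldsymbol{U}$ and $\boldsymbol{V}$ on the fixed Mach/Froude branch) addresses a point the paper leaves implicit and strengthens the argument.
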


\begin{theorem}
For the Ripa model, the fully-discrete DG method, space discretization (\ref{scheme:ripaiso2d})-(\ref{flux:ripaiso2d}) together with the third-order SSP-RK time discretization (\ref{3rk}), is exact for the isobaric equilibrium state (\ref{isobaric:Ripa}).
\end{theorem}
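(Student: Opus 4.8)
The plan is to reduce the fully-discrete claim to the corresponding semi-discrete theorem already proven for the Ripa isobaric state, exploiting the convex-combination structure of the SSP-RK3 scheme (\ref{3rk}). The key observation is that each of the three Runge-Kutta stages is assembled from forward-Euler updates whose spatial operator is precisely $\text{RHS}^{\text{iso}}$, and the semi-discrete result guarantees that this operator annihilates the isobaric equilibrium.

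First I would assume that $\boldsymbol{V}^n$ lies in the isobaric equilibrium (\ref{isobaric:Ripa}), so that $u=v=0$, the bottom $b$ is constant, and $p$ is constant over $\Omega$. By the semi-discrete theorem we then have $\text{RHS}^{\text{iso}}(\boldsymbol{V}^n,\omega,\boldsymbol{\varphi})=0$ for every $\boldsymbol{\varphi}\in\boldsymbol{\mathcal{V}}_h^k$. Substituting this into the first stage of (\ref{3rk}) yields
\begin{equation*}
\int_{\mathcal{T}}\boldsymbol{U}(\boldsymbol{V}^{(1)})\cdot\boldsymbol{\varphi}\ dx=\int_{\mathcal{T}}\boldsymbol{U}(\boldsymbol{V}^{n})\cdot\boldsymbol{\varphi}\ dx
\end{equation*}
for all test functions. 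Invoking the one-to-one correspondence between $\boldsymbol U$ and $\boldsymbol V$ in (\ref{iso_vt}) together with the well-posedness of the local Newton solve, I would conclude that $\boldsymbol{V}^{(1)}=\boldsymbol{V}^n$ is the unique solution, so the first intermediate stage remains an isobaric equilibrium.

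Next I would propagate this equality through the remaining stages by induction. Since $\boldsymbol{V}^{(1)}=\boldsymbol{V}^n$ is again isobaric, the semi-discrete theorem gives $\text{RHS}^{\text{iso}}(\boldsymbol{V}^{(1)},\omega,\boldsymbol{\varphi})=0$; inserting this together with $\boldsymbol{V}^{(1)}=\boldsymbol{V}^n$ into the second stage collapses the convex combination to $\int_{\mathcal{T}}\boldsymbol{U}(\boldsymbol{V}^{(2)})\cdot\boldsymbol{\varphi}\ dx=\int_{\mathcal{T}}\boldsymbol{U}(\boldsymbol{V}^{n})\cdot\boldsymbol{\varphi}\ dx$, whence $\boldsymbol{V}^{(2)}=\boldsymbol{V}^n$ by the same uniqueness argument. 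Repeating once more for the third stage produces $\int_{\mathcal{T}}\boldsymbol{U}(\boldsymbol{V}^{n+1})\cdot\boldsymbol{\varphi}\ dx=\int_{\mathcal{T}}\boldsymbol{U}(\boldsymbol{V}^{n})\cdot\boldsymbol{\varphi}\ dx$, so that $\boldsymbol{V}^{n+1}=\boldsymbol{V}^n$ and the isobaric equilibrium survives a full time step.

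The main obstacle I anticipate is the injectivity step: deducing $\boldsymbol{V}^{(i)}=\boldsymbol{V}^n$ from the equality of the projected integrals rather than from a linear cancellation. Because $\boldsymbol{U}(\boldsymbol{V})$ is genuinely nonlinear in $\boldsymbol V$ (note the $p/h$ term in (\ref{iso_vt})), the map $\boldsymbol{V}\mapsto\int_{\mathcal{T}}\boldsymbol{U}(\boldsymbol{V})\cdot\boldsymbol{\varphi}\ dx$ is not linear, so I must argue that $\boldsymbol{V}^n$ is the \emph{unique} root of the nonlinear system solved at each stage, not merely one among several. This follows from the local invertibility of the transformation (\ref{iso_vt}) and the fact that the Newton iteration is initialized at the equilibrium itself, but it is the single place where the convex-combination bookkeeping does not suffice on its own.
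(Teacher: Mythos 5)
Your proposal is correct and follows exactly the route the paper intends: the paper omits this proof entirely, remarking only that ``some trivial induction'' from the semi-discrete theorem yields the fully-discrete result, and your stage-by-stage induction through the SSP-RK3 convex combinations is precisely that argument. Your explicit attention to the injectivity step --- that $\boldsymbol{V}^{(i)}=\boldsymbol{V}^n$ must be deduced from equality of the projected integrals via the one-to-one variable transformation and the local nonlinear solve, rather than by linear cancellation --- is in fact more careful than anything the paper provides.
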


\section{Numerical examples}\label{se:nu}
This section implements our proposed well-balanced discontinuous Galerkin method for the one- and two-dimensional Ripa model and Euler equations with gravitation.
A quadratic ($P^2$) polynomial basis is adopted for our spatial discretization in all numerical experiments. The time step constraint is acquired by
  $$\Delta t = \text{CFL}\frac{ h_m }{\max_{l,\boldsymbol U}{|\lambda_l(\boldsymbol U)|}},$$ where $h_m$ denotes the mesh size and $\lambda_l(\boldsymbol U)$ is the eigenvalues of the systems. The CFL number is taken as 0.1 for one and two dimensions. The integrals of fluxes and source terms were computed numerically using $(2k+1)^d$ Gauss-Legendre points, where $d$ is the dimensionality of the system, in order to accurately compute the integrals. The constant in the TVB slope limiter is set as 0, except for the accuracy tests in which no slope limiter is applied. Unless otherwise specified, in the Ripa model, the gravitation constant $g$ is taken as 9.812 $m/s^2$, and we use the moving equilibria preserving DG method \eqref{scheme2d}-\eqref{flux2d} for simulation.
As stated in \cite{zhang2023moving}, most of the extra cost is spent on solving the nonlinear equation systems, which are local in each cell.  The solution of these local nonlinear equations in a fully-discrete scheme (\ref{3rk}) is implemented by the Newton-Raphson method.

\subsection{Euler equations with gravity}
\subsubsection{One-dimensional tests}
\begin{example}{\bf Accuracy test}\label{euler:accuracy1D}
\end{example}
To test the high order accuracy of the developed well-balanced DG scheme for one-dimensional Euler equations with gravity, we consider the following stationary solution taken from \cite{chandrashekar2015second}
\begin{equation}\label{euler:inismoo}
 \rho(x,t)=\exp(-x), \quad u(x,t) = 0, \quad p(x,t) = (1+x)\exp(-x),
\end{equation}
on the computational domain $[0,1]$. The gravitational field is given by $\phi(x) = \frac{1}{2}x^2$, and the ratio of specific heats is $\gamma=1.4$. We run the simulations up to $t=0.1$, and calculate the $L^1$ errors of the numerical solution relative to the exact stationary solution. The errors and orders of accuracy for the conservative variables $\rho$, $\rho u$, $E$ and the equilibrium variables $K$, $\varepsilon$ are shown in Table \ref{euler:DGacc1D}, from which we can observe the excepted third-order accuracy.
\begin{table}[htb]
  \centering
  \caption{Example \ref{euler:accuracy1D}:  $L^1$ errors and numerical orders of accuracy with initial condition (\ref{euler:inismoo}), $t = 0.1$.}\label{euler:DGacc1D}
  \begin{tabular}{ c c c c c c c c c c c}
  \toprule
    \multirow{2}{*} {$nx$} & \multicolumn{2}{c}{$\rho$}&\multicolumn{2}{c}{$\rho u $}&\multicolumn{2}{c}{$ E$}& \multicolumn{2}{c}{$K$} &\multicolumn{2}{c}{$ \varepsilon $}\\
   \cmidrule(lr){2-3} \cmidrule(lr){4-5} \cmidrule(lr){6-7} \cmidrule(lr){8-9} \cmidrule(lr){10-11}
    ~ &$L^1$ error &order&$L^1$ error &order&$L^1$ error &order&$L^1$ error &order&$L^1$ error &order \\
   \midrule
   20 & 2.52E{-07} &     -- &  3.10E{-07} &  --  &  7.77E{-07} &  --  & 4.94E{-07} &     -- & 7.92E{-07} &     --\\
   40 & 3.14E{-08} &   3.00 &  3.92E{-08} & 2.98 &  9.70E{-08} & 3.00 & 6.19E{-08} &   3.00 & 9.97E{-08} &   2.99\\
   80 & 3.92E{-09} &   3.00 &  4.94E{-09} & 2.99 &  1.21E{-08} & 3.00 & 7.75E{-09} &   3.00 & 1.25E{-08} &   2.99\\
   160& 4.91E{-10} &   3.00 &  6.20E{-10} & 2.99 &  1.51E{-09} & 3.00 & 9.70E{-10} &   3.00 & 1.57E{-09} &   3.00\\
   320& 6.13E{-11} &   3.00 &  7.76E{-11} & 3.00 &  1.88E{-10} & 3.01 & 1.21E{-10} &   3.00 & 1.97E{-10} &   2.99\\
  \bottomrule
  \end{tabular}
\end{table}

\begin{example}{\bf Test for one-dimensional isentropic equilibrium}\label{euler:ise_1d}
\end{example}
To verify the well-balanced property, we consider the isentropic hydrostatic atmosphere studied in \cite{chandrashekar2015second} on the computational domain $[0,1]$. The steady state solutions with zero velocity are given by
\begin{equation}\label{euler:iseini_1d}
\begin{aligned}
  \rho(x) =  \left( 1-\frac{\gamma-1}{\gamma} \phi(x)\right)^{\frac{1}{\gamma-1}}, \ u(x)=0, \ p(x) = \left( 1-\frac{\gamma-1}{\gamma} \phi(x)\right)^{\frac{\gamma}{\gamma-1}},
\end{aligned}
\end{equation}
where $\gamma = 1.4$ and three different potential functions $\phi(x) = x, \frac{1}{2}x^2, \sin(2\pi x)$ are tested. The boundaries are treated as solid walls, and the final time is set as $t=2$. The $L^1$ and $L^{\infty}$ errors with 100 uniform cells for both the conservative and equilibrium variables are listed in Table \ref{euler:isewb_1d}. Errors in the order of machine accuracy can be observed under different gravitational fields.
\begin{table}[htb]
  \centering
  \caption{Example \ref{euler:ise_1d}: $L^1$ and $L^{\infty}$ errors for one-dimensional isentropic hydrostatic atmosphere.}\label{euler:isewb_1d}
  \begin{tabular}{ c c c c c c c }
   \toprule
    $\phi(x)$ &{error}&{$\rho$}&{$\rho u$}&{$E$}&{$K$}&{$\varepsilon$}\\
    \midrule
    \multirow{2}{*}{$x$}       &$L^1$        & 1.53E{-14} &  4.73E{-15} & 2.32E{-14} &  3.20E{-14} & 7.37E{-14} \\
                         ~     &$L^{\infty}$ & 4.75E{-14} &  1.20E{-14} & 5.31E{-14} &  9.78E{-14} & 2.13E{-13} \\
    \midrule
    \multirow{2}{*}{$ \frac{1}{2}x^2$}&$L^1$        & 2.22E{-14} &  6.35E{-15} & 2.74E{-14} &  2.99E{-14} & 7.07E{-14} \\
                                ~     &$L^{\infty}$ & 5.37E{-14} &  1.85E{-14} & 9.59E{-14} &  7.45E{-14} & 1.60E{-13} \\
    \midrule
    \multirow{2}{*}{$\sin(2\pi x)$} &$L^1$        & 2.89E{-14} &  1.14E{-14} & 3.28E{-14} &  3.33E{-14} & 8.49E{-14} \\
                              ~     &$L^{\infty}$ & 1.16E{-13} &  2.62E{-14} & 1.21E{-13} &  1.01E{-13} & 2.78E{-13} \\
    \bottomrule
  \end{tabular}
\end{table}

\begin{example}{\bf Well-balanced property test for the steady adiabatic flow}\label{euler:wb_adi}
\end{example}
In this example, we demonstrate the well-balanced property for the steady adiabatic flow, which consists of non-hydrostatic equilibria. We consider the linear gravitational field $\phi(x)=x$. Following the same setup in \cite{grosheintz2020well}, the initial conditions on a domain $[0,2]$ are given by
\begin{equation}\label{euler:adi_ini}
(\rho,u,p)(x,0) = (\rho_{eq}(x), u_{eq}(x), p_{eq}(x)),
\end{equation}
where $ (\rho_{eq}, u_{eq}, p_{eq})$ is the equilibrium defined by the point values
\begin{equation}\label{euler:adi_data}
(\rho_0,u_0,p_0) = (1, -M c_{s,0},1)
\end{equation}
at $x_0 = 0$. The ratio of specific heats is taken as $\gamma=5/3$ and $c_{s,0} = \gamma^{1/2}$ is the speed of sound at $x_0$. Three cases, including a hydrostatic ($M=0$), a subsonic ($M=0.01$), and a supersonic ($M=2.5$) equilibrium flow, are considered. Fig. \ref{euler:iniplot} plots the initial density, velocity, and pressure for different Mach numbers $M$. The values of the ghost points are equal to the initial conditions. We compute the solutions up to $t = 4$ for the hydrostatic and subsonic cases, and $t=1$ for the supersonic case. Table \ref{euler:adi_10021D} shows the $L^1$ and $L^{\infty}$ errors for all values of $M$ with 100 uniform cells. Both the conservative and the equilibrium variables are in equilibrium within machine precision.

\begin{figure}[htb!]
  \centering
  \subfigure[density $\rho$]{
  \centering
  \includegraphics[width=4.9cm,scale=1]{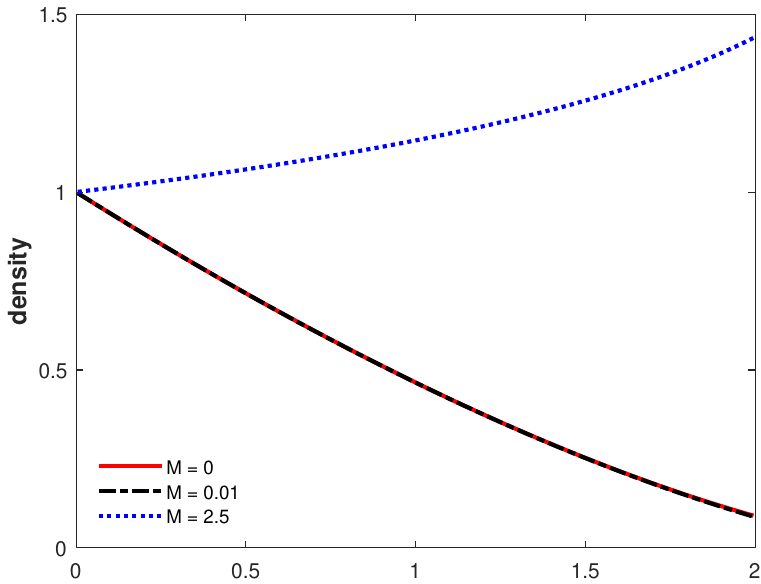}
  }
  \subfigure[velocity $u$]{
  \centering
  \includegraphics[width=4.9cm,scale=1]{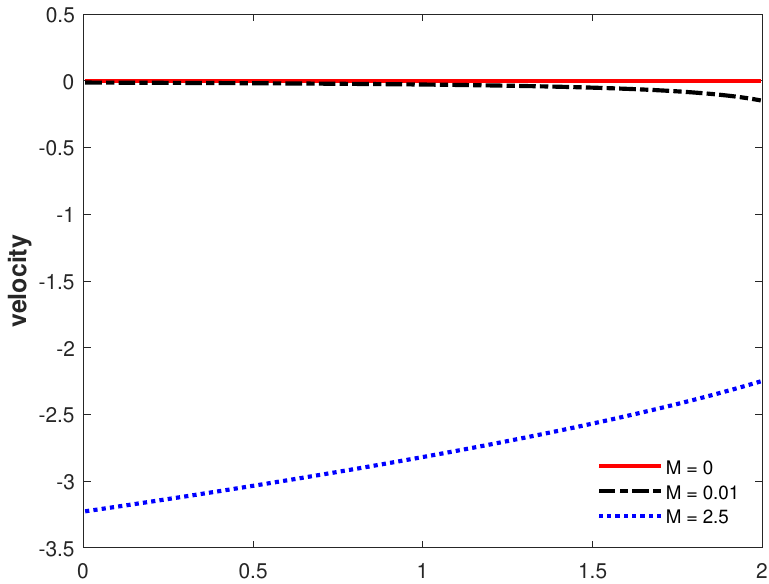}
  }
  \subfigure[pressure $p$]{
  \centering
  \includegraphics[width=4.9cm,scale=1]{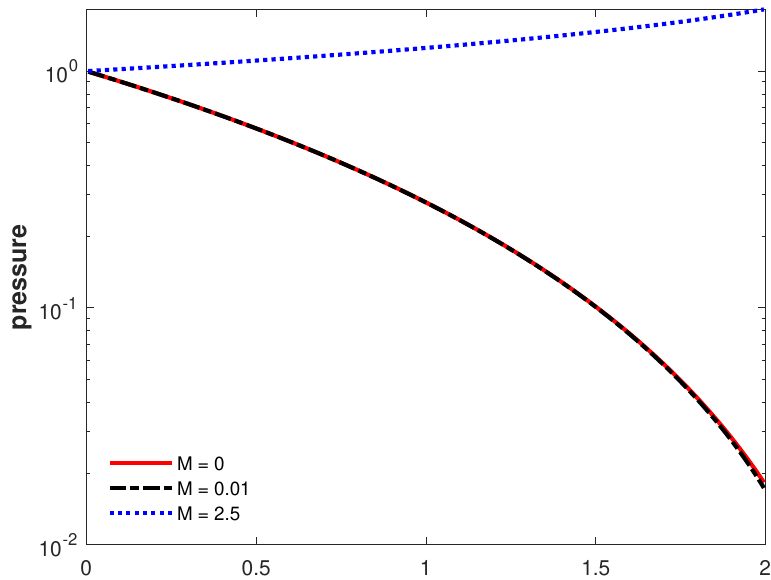}
  }
  \caption{Example \ref{euler:wb_adi}: From left to right: the initial density $\rho$, velocity $u$ and pressure $p$ computed by (\ref{euler:adi_ini}) for three values of $M$. }\label{euler:iniplot}
\end{figure}

\begin{table}[htb]
  \centering
  \caption{Example \ref{euler:wb_adi}: $L^1$ and $L^{\infty}$ errors for one-dimensional steady adiabatic flow.}\label{euler:adi_10021D}
  \begin{tabular}{ c c c c c c c }
   \toprule
    case &{error}&{$\rho$}&{$\rho u$}&{$E$}&{$K$}&{$\varepsilon$}\\
    \midrule
    \multirow{2}{*}{$M=0$} &$L^1$        & 3.37E{-14} &  1.29E{-14} & 3.06E{-14} &  1.07E{-13} & 1.29E{-13} \\
                         ~     &$L^{\infty}$ & 6.28E{-14} &  1.48E{-14} & 2.75E{-14} &  1.78E{-13} & 1.56E{-13} \\
    \midrule
    \multirow{2}{*}{$M=0.01$}&$L^1$        & 3.43E{-14} &  1.46E{-14} & 3.06E{-14} &  1.11E{-13} & 1.45E{-13} \\
                           ~     &$L^{\infty}$ & 6.14E{-14} &  1.52E{-14} & 2.69E{-14} &  1.02E{-13} & 1.28E{-13} \\
    \midrule
    \multirow{2}{*}{$M=2.5$} &$L^1$        & 2.38E{-13} &  3.21E{-13} & 5.72E{-13} &  1.84E{-13} & 8.73E{-13} \\
                           ~     &$L^{\infty}$ & 2.76E{-13} &  3.71E{-13} & 6.73E{-13} &  2.41E{-13} & 9.34E{-13} \\
    \bottomrule
  \end{tabular}
\end{table}

\begin{example}{\bf Small perturbation of the steady adiabatic flow}\label{euler:per_adi}
\end{example}
In this test, we investigate the capability to capture small perturbations of the equilibrium state (\ref{adi:euler}). Almost the same setup in Example \ref{euler:wb_adi}, we first impose a periodic velocity perturbation
\begin{equation}\label{euler:vp}
u(0,t) = A\sin(4\pi t)
\end{equation}
on the hydrostatic ($M=0$) equilibrium flow with a big amplitude  $A = 0.1$ and a small amplitude $A=10^{-6}$, respectively. The stopping time is $t=1.5$ before the waves propagate to the upper boundary. We calculate the results on a coarse mesh with 100 cells and a finer mesh of 2000 cells for comparison. The pressure perturbation and velocity for the small amplitude are presented in Fig. \ref{euler:persmall}.  The traditional third-order non-well-balanced scheme (abbreviated as `nwb') is also employed, and we plot the results in the same figure. Fig. \ref{euler:perbig} displays similar solutions for the big amplitude. Although the non-well-balanced scheme behaves well in the propagation of large perturbation waves, it is difficult to simulate the smaller disturbance due to the existence of gravity. In contrast, our well-balanced DG method has its advantages in capturing the small perturbation of the hydrostatic equilibrium state, and the solutions are free of spurious numerical oscillations near the discontinuity.

\begin{figure}[htb!]
  \centering
  \subfigure[pressure perturbation]{
  \centering
  \includegraphics[width=5.5cm,scale=1]{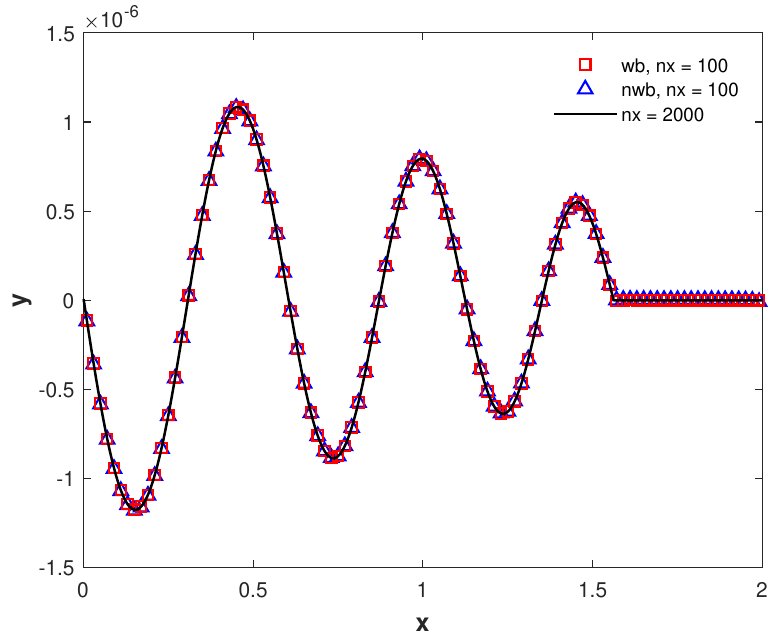}
  }
  \subfigure[velocity ]{
  \centering
  \includegraphics[width=5.5cm,scale=1]{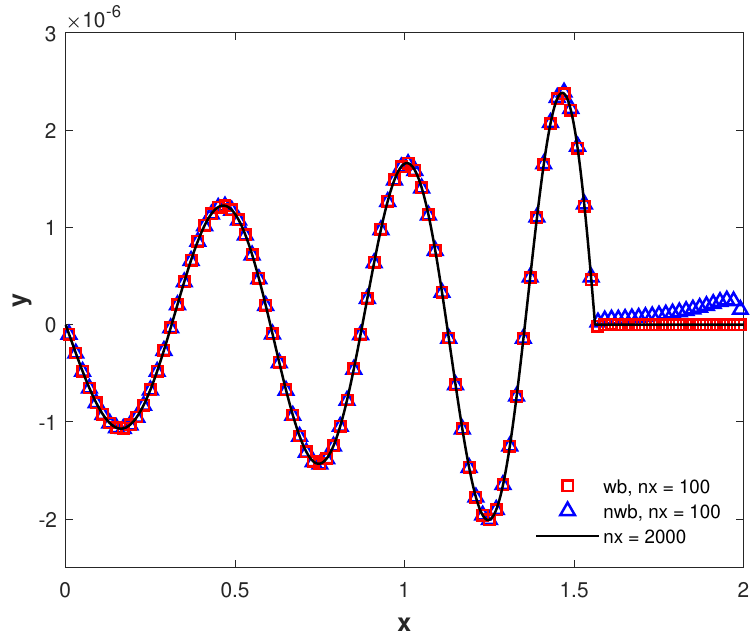}
  }
  \caption{Example \ref{euler:per_adi}: Pressure perturbation and velocity of the hydrostatic atmosphere ($M=0$) for the small amplitude wave propagation $A=10^{-6}$ in (\ref{euler:vp}), using the well-balanced scheme with 100 and 2000 cells, the non-well-balanced scheme with 100 cells for comparison. }\label{euler:persmall}
\end{figure}

\begin{figure}[htb!]
  \centering
  \subfigure[pressure perturbation]{
  \centering
  \includegraphics[width=5.5cm,scale=1]{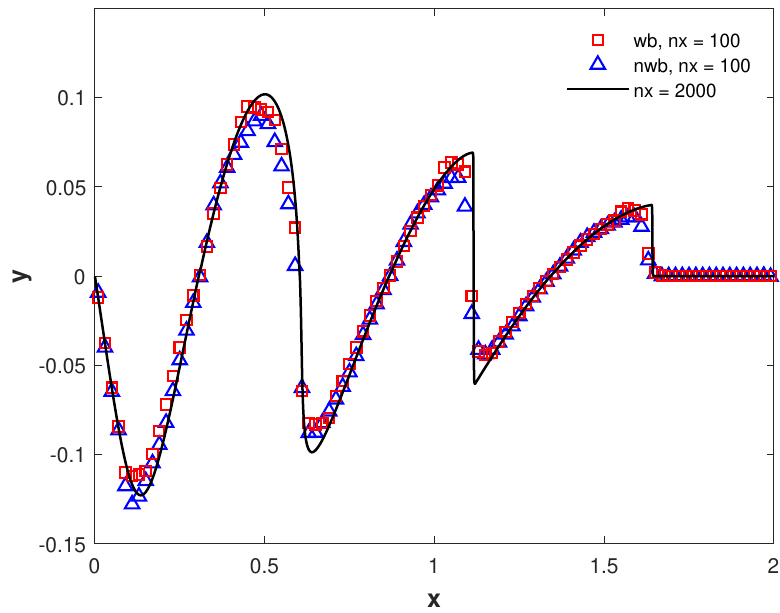}
  }
  \subfigure[velocity]{
  \centering
  \includegraphics[width=5.5cm,scale=1]{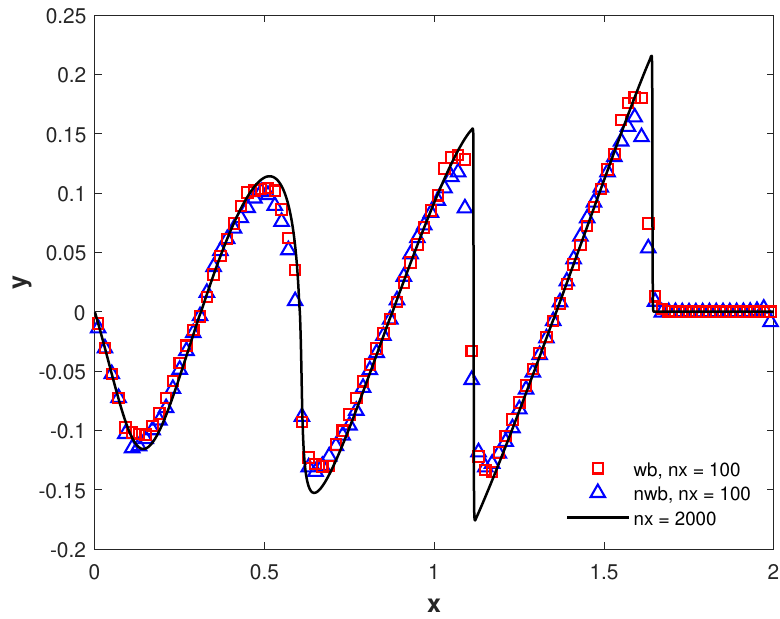}
  }
  \caption{Example \ref{euler:per_adi}: Pressure perturbation and velocity of the hydrostatic atmosphere ($M=0$) for the big amplitude wave propagation $A=0.1$ in (\ref{euler:vp}), using the well-balanced scheme with 100 and 2000 cells, the non-well-balanced scheme with 100 cells for comparison. }\label{euler:perbig}
\end{figure}

Next, we add a perturbation to the initial pressure
\begin{equation}\label{euler:adi_peradd}
(\rho,u,p)(x,0) = (\rho_{eq}(x), u_{eq}(x), p_{eq}(x)+A\exp(-100(x-\bar x)^2))
\end{equation}
with the location of the disturbance
\begin{equation}\label{euler:xloc}
\bar x =\left\{\begin{array}{lll}
    1.0,&\text{for} \  M=0, \\
    1.1,&\text{for} \  M=0.01,\\
    1.5,&\text{for} \  M=2.5.
    \end{array}\right.
\end{equation}
Here $ (\rho_{eq}, u_{eq}, p_{eq})$ is the same as (\ref{euler:adi_ini}) and a small amplitude $A=10^{-6}$ is considered. We take the stopping time $t=0.45$ for the hydrostatic and subsonic cases, and $t=0.25$ for the supersonic case. Figs. \ref{euler:persmall_hydro}, \ref{euler:persmall_sub} and \ref{euler:persmall_sup} demonstrate the three different numerical solutions by our developed method on a coarse mesh of 50 uniform cells. For comparison, we also run the same tests by the third order hydrostatically well-balanced DG scheme \cite{li2018welldge} (abbreviated as `wb$\_$Li') and the non-well-balanced scheme, respectively.
As expected, it shows a failed simulation of the non-well-balanced scheme for all Mach numbers. The hydrostatically well-balanced method only consists of our proposed method for the hydrostatic case $M=0$. In contrast, our well-balanced DG method gives well-resolved solutions that agree with the reference solutions under a relatively coarse mesh.
Furthermore, to show the performance for problems containing discontinuity, we consider a large amplitude  $A = 1$. The stopping time is chosen the same as before. We only show the pressure and velocity as well as their perturbation with 100 uniform cells for the subsonic atmosphere ($M=0.01$) in Fig. \ref{euler:perbig_sub}. Herein, the reference solutions are obtained by
our scheme with much refined 2000 cells. It demonstrates that our scheme works well for non-hydrostatic equilibrium states and provides oscillatory-free solutions.

\begin{figure}[htb!]
  \centering
  \subfigure[pressure perturbation]{
  \centering
  \includegraphics[width=5.5cm,scale=1]{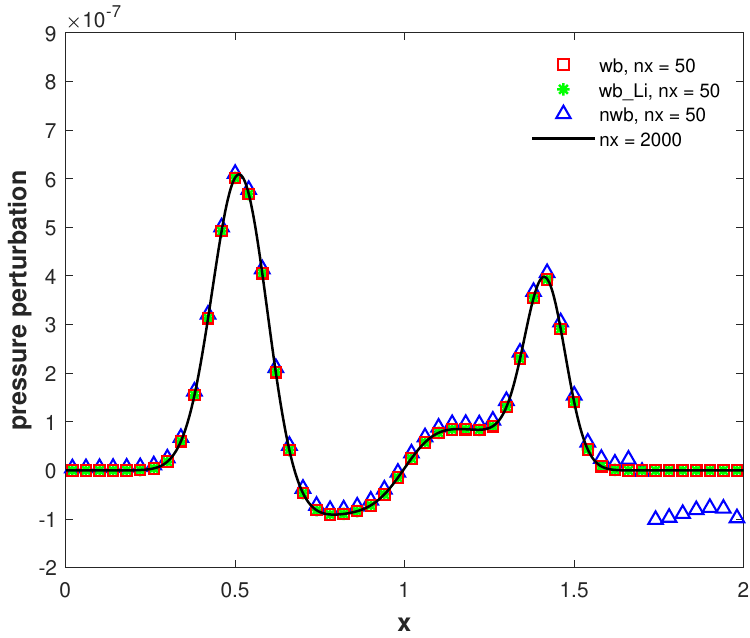}
  }
  \subfigure[velocity perturbation]{
  \centering
  \includegraphics[width=5.5cm,scale=1]{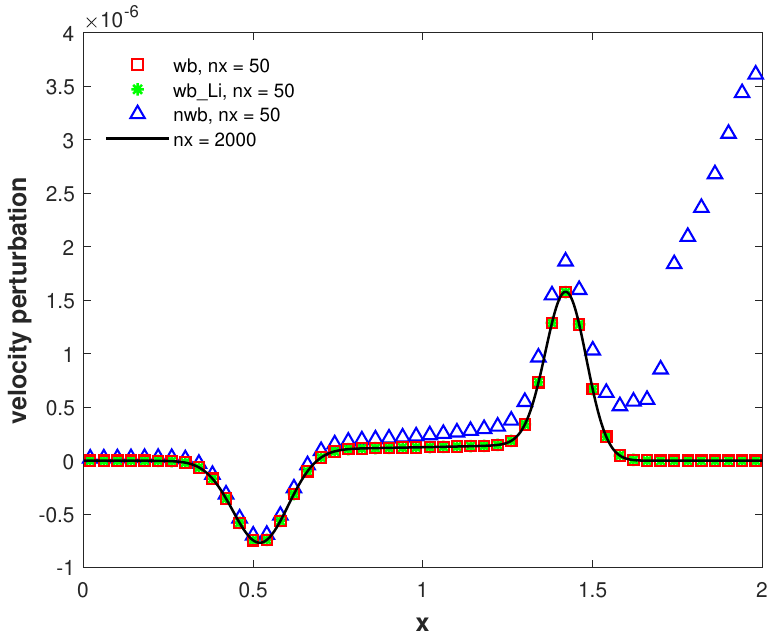}
  }
  \caption{Example \ref{euler:per_adi}: Pressure perturbation and velocity perturbation of the hydrostatic atmosphere ($M=0$) for the small amplitude wave propagation $A=10^{-6}$  in (\ref{euler:adi_peradd}). }\label{euler:persmall_hydro}
\end{figure}

\begin{figure}[htb!]
  \centering
  \subfigure[pressure perturbation]{
  \centering
  \includegraphics[width=5.5cm,scale=1]{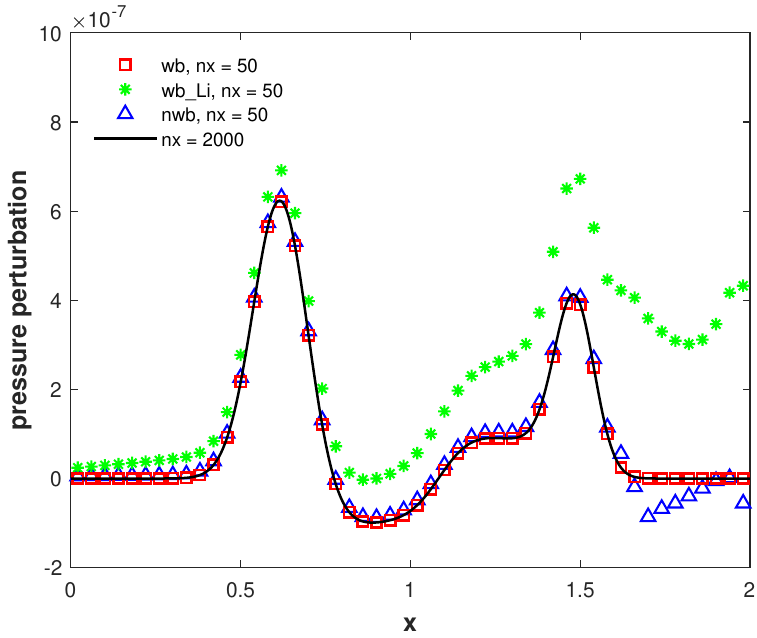}
  }
  \subfigure[velocity perturbation]{
  \centering
  \includegraphics[width=5.5cm,scale=1]{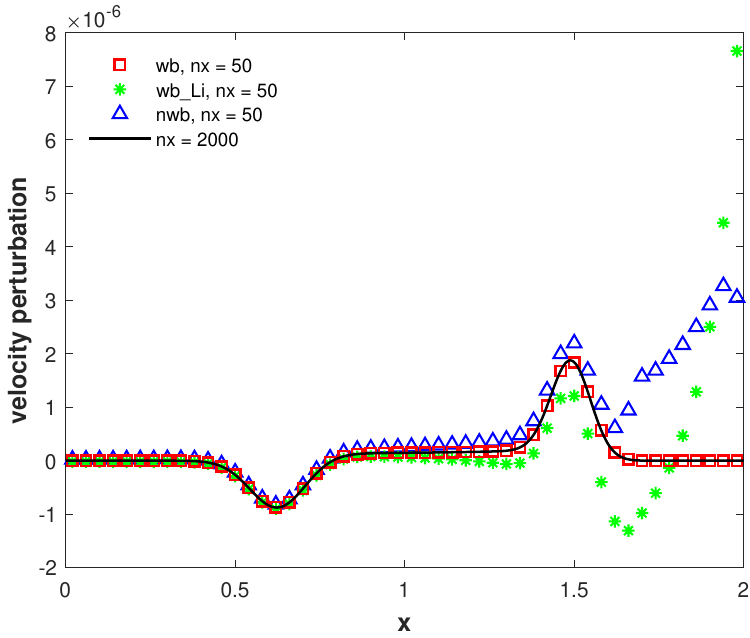}
  }
  \caption{Example \ref{euler:per_adi}: Pressure perturbation and velocity perturbation of the subsonic atmosphere ($M=0.01$) for the small amplitude wave propagation $A=10^{-6}$ in (\ref{euler:adi_peradd}). }\label{euler:persmall_sub}
\end{figure}

\begin{figure}[htb!]
  \centering
  \subfigure[pressure perturbation]{
  \centering
  \includegraphics[width=5.5cm,scale=1]{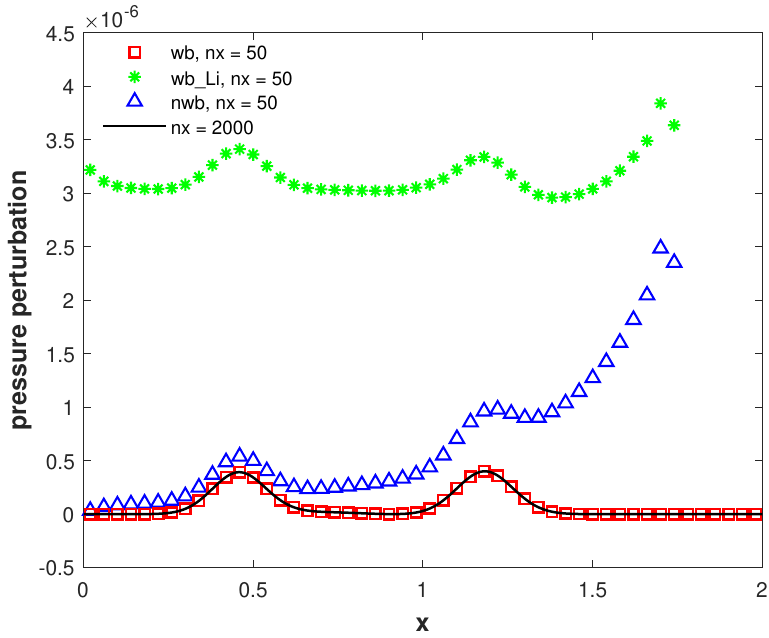}
  }
  \subfigure[velocity perturbation]{
  \centering
  \includegraphics[width=5.5cm,scale=1]{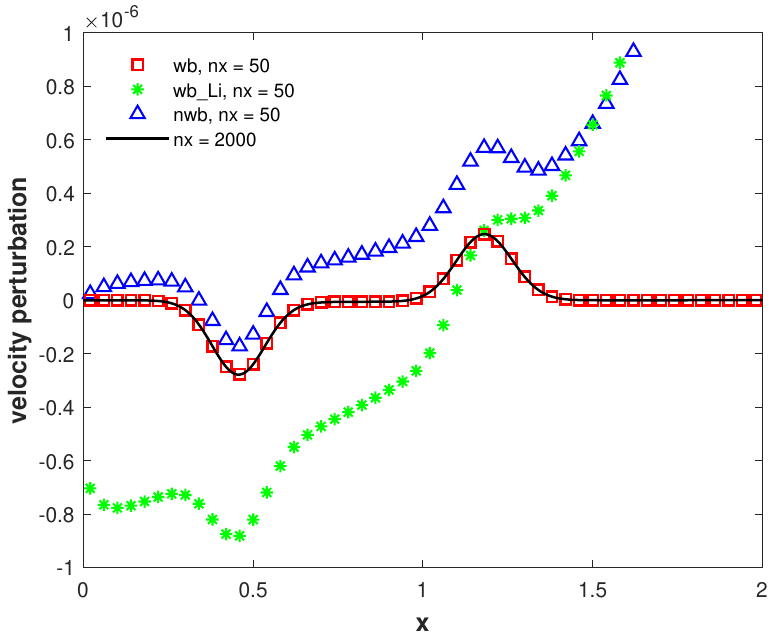}
  }
  \caption{Example \ref{euler:per_adi}: Pressure perturbation and velocity perturbation of the supersonic atmosphere ($M=2.5$) for the small amplitude wave propagation $A=10^{-6}$ in (\ref{euler:adi_peradd}). }\label{euler:persmall_sup}
\end{figure}

\begin{figure}[htb!]
  \centering
  \subfigure[pressure perturbation]{
  \centering
  \includegraphics[width=5.5cm,scale=1]{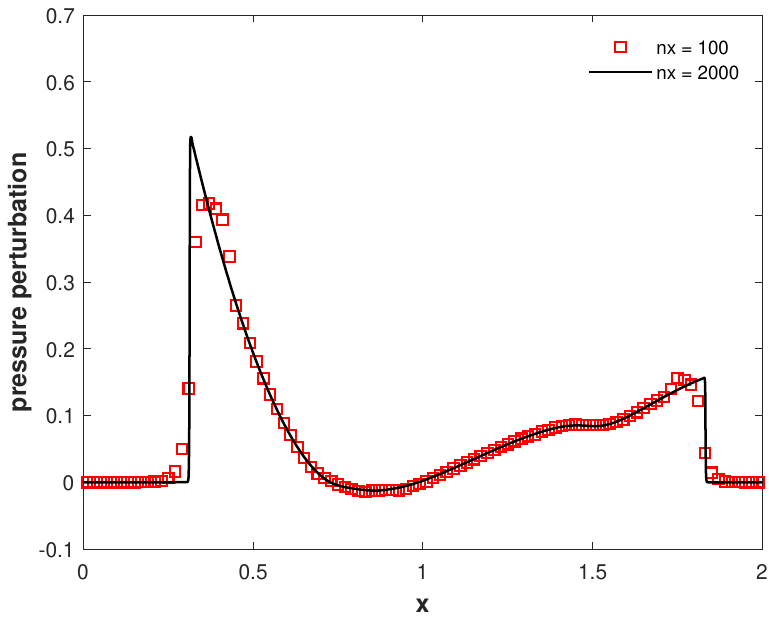}
  }
  \subfigure[velocity perturbation]{
  \centering
  \includegraphics[width=5.5cm,scale=1]{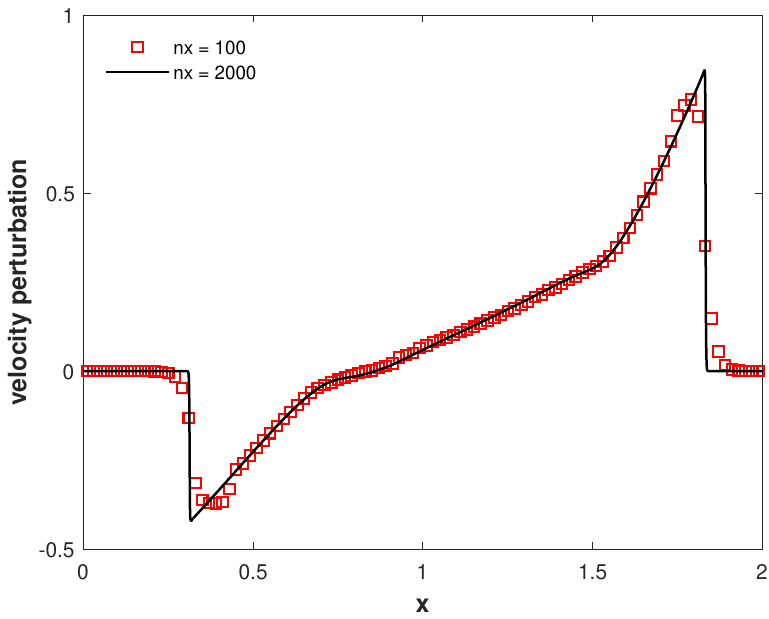}
  }
  \subfigure[pressure]{
  \centering
  \includegraphics[width=5.5cm,scale=1]{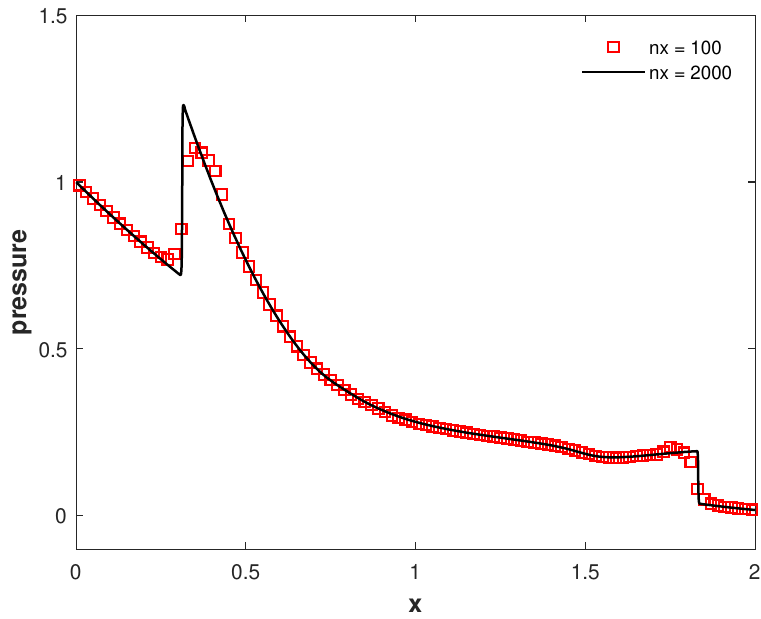}
  }
  \subfigure[velocity]{
  \centering
  \includegraphics[width=5.5cm,scale=1]{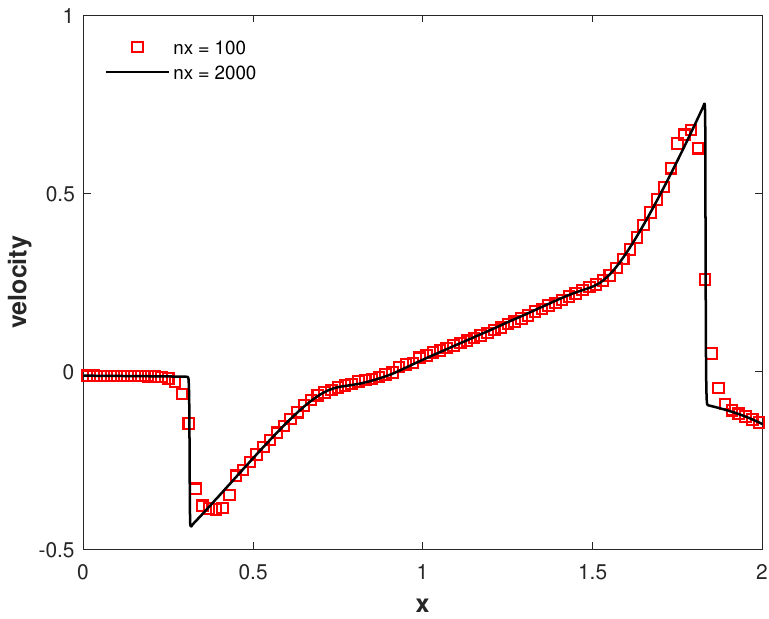}
  }
  \caption{Example \ref{euler:per_adi}: Small perturbation of the subsonic atmosphere ($M=0.01$) for the big amplitude wave propagation $A=1$ in (\ref{euler:adi_peradd}), using 100 and 2000 grid cells. Top: results of pressure and velocity perturbation, bottom: results of pressure and velocity.}\label{euler:perbig_sub}
\end{figure}

\begin{example}{\bf Well-balanced property test for a discontinuous equilibrium}\label{euler:wb_aditr}
\end{example}
To check the well-balanced property for a discontinuous equilibrium, we consider a subsonic flow in the lower half and supersonic in the upper half. They are joined by a stationary shock under the gravitational field $\phi(x)=\frac{1}{2}x^2$. Let the computational domain be $[0,2]$ and the shock is located in the middle of the domain $x_0=1$. The adiabatic index is $\gamma=5/3$. First, we define the pre-shock values
\begin{equation}\label{euler:trans_ini1}
\rho_{0,1} = 1, c_{0,1}^2 = \gamma, u_{0,1} = -M c_{0,1}
\end{equation}
with the Mach number $M = 2.5$. Then, by Rankine-Hugoniot jump conditions for a stationary shock \cite{LANDAU1987313}, we have the post-shock values
\begin{equation}\label{euler:trans_ini2}
\rho_{0,2} = \rho_{0,1}\dfrac{(\gamma+1)M^2}{(\gamma-1)M^2+2}, p_{0,2} = p_{0,1}\left(\dfrac{2\gamma M^2}{\gamma+1} - \dfrac{\gamma-1}{\gamma+1}\right), u_{0,2} = \dfrac{ \rho_{0,1}}{ \rho_{0,2}} u_{0,1}.
\end{equation}
The initial conditions in the upper $(k=1)$ and lower $(k=2)$ halves are given by
\begin{equation}\label{euler:aditr_ini}
(\rho,u,p)(x,0) = (\rho_{eq,k}(x), u_{eq,k}(x), p_{eq,k}(x)),
\end{equation}
where $ (\rho_{eq,k}, u_{eq,k}, p_{eq,k})$ is the equilibrium defined by $\rho_{0,k},u_{0,k},p_{0,k}$ at $x_0=1$. In our computation, we put the shock exactly at the cell boundary, and the Roe flux is needed for solving the approximate Riemann problem. We run the solution up to $t=1$ with 100 uniform cells, and list the $L^1$ and $L^{\infty}$ errors at double precision in Table \ref{euler:transwb1D}. For comparison, not only our proposed scheme, but also the non-well-balanced scheme and hydrostatically well-balanced scheme \cite{li2018welldge} together with piecewise constant $(P^0)$ and quadratic $(P^2)$ polynomial basis are applied for our simulation. We plot the corresponding density $\rho$, velocity $u$, pressure $p$ in Fig. \ref{euler:transcom1}. The difference between the numerical solutions at $t=1$ and the initial conditions are shown in Fig. \ref{euler:transcom}. For the `nwb' method and `wb$\_$Li' method, we can observe that first-order schemes fail to compute this discontinuous equilibrium state thoroughly, and third-order schemes accrue large errors near the stationary shock. In contrast, our well-balanced method can indeed maintain the moving equilibria with a round-off error.

\begin{table}[!ht]
  \centering
  \caption{Example \ref{euler:wb_aditr}: $L^1$ and $L^{\infty}$ errors for one-dimensional discontinuous equilibrium flow.}\label{euler:transwb1D}
  \begin{tabular}{ c c c c c c }
   \toprule
    {error}&{$\rho$}&{$\rho u$}&{$E$}&{$K$}&{$\varepsilon$}\\
    \midrule
    $L^1$        & 2.46E{-13} &  1.96E{-13}  &  1.36E{-13} & 1.63E{-13} & 5.21E{-13}\\
    $L^{\infty}$ & 9.33E{-12} &  3.50E{-12}  &  3.97E{-12} & 1.89E{-12} & 5.01E{-13}\\
    \bottomrule
  \end{tabular}
\end{table}

\begin{figure}[htb!]
  \centering
  \subfigure[ density $\rho$ ]{
  \centering
     \includegraphics[width= 4.8cm,scale=1]{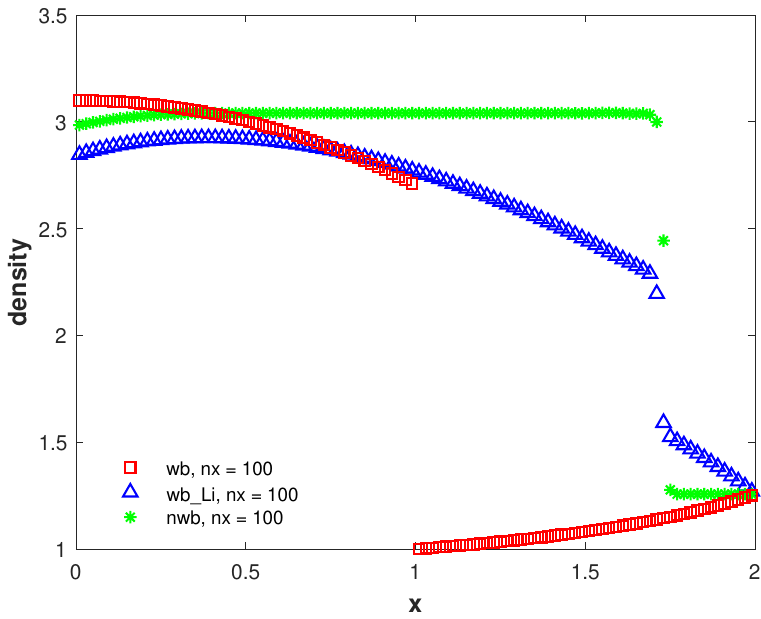}
  }
  \subfigure[velocity $u$]{
  \centering
     \includegraphics[width= 4.8cm,scale=1]{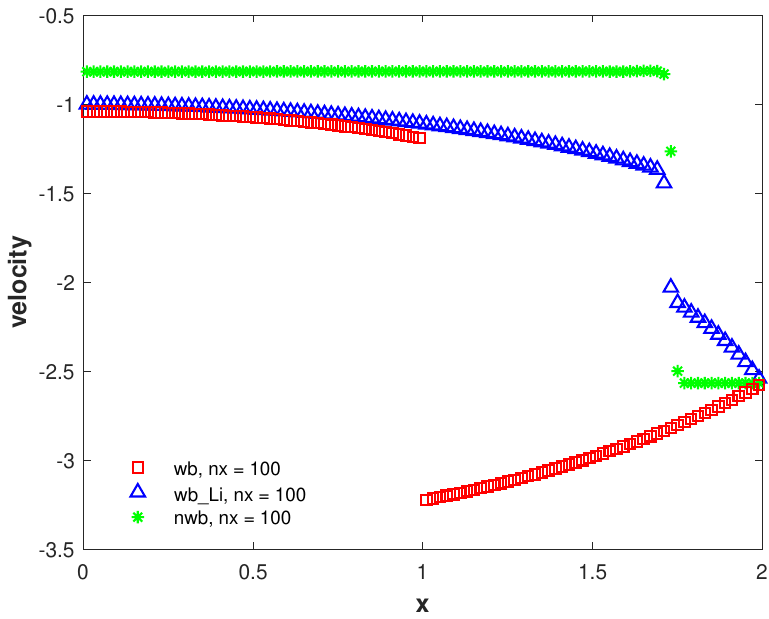}
  }
  \subfigure[pressure $p$]{
  \centering
     \includegraphics[width= 4.8cm,scale=1]{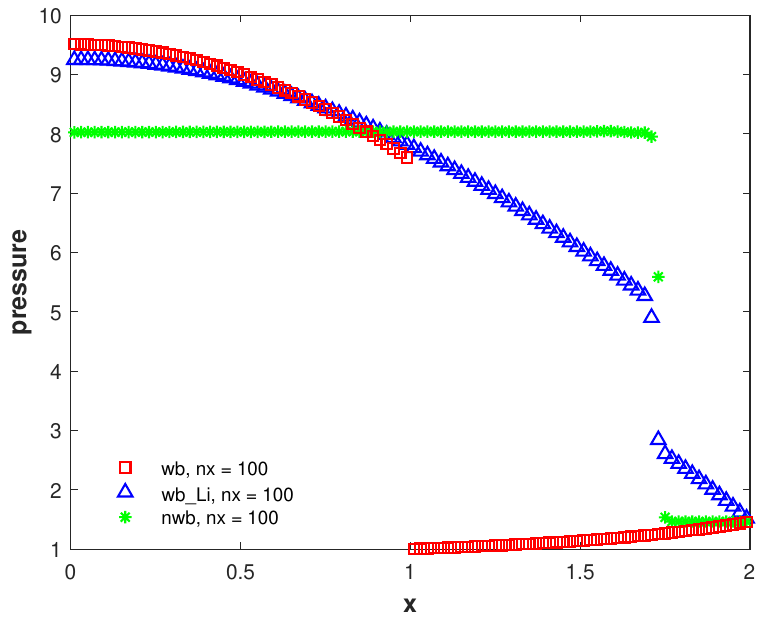}
  }
  \subfigure[ density $\rho$ ]{
  \centering
     \includegraphics[width= 4.8cm,scale=1]{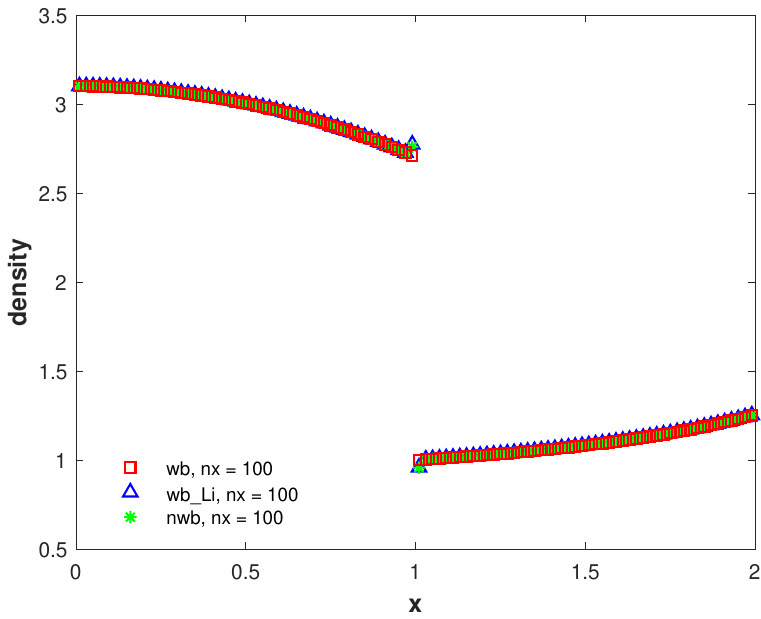}
  }
  \subfigure[velocity $u$]{
  \centering
     \includegraphics[width= 4.8cm,scale=1]{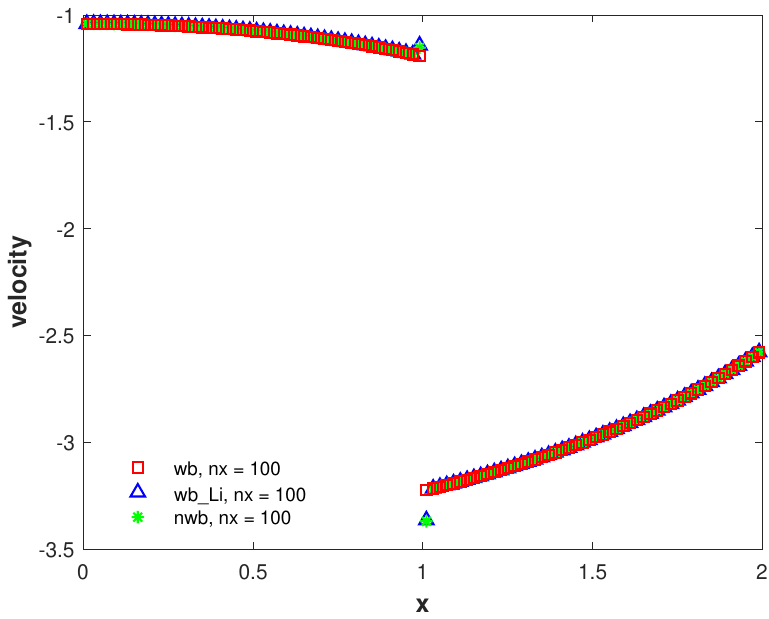}
  }
  \subfigure[pressure $p$]{
  \centering
     \includegraphics[width= 4.8cm,scale=1]{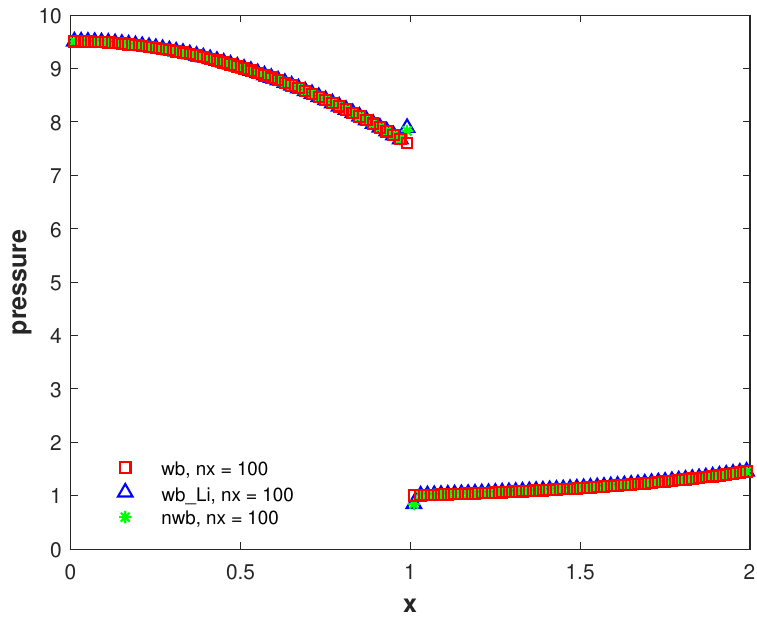}
  }
  \caption{Example \ref{euler:wb_aditr}: Numerical solutions of the density, velocity, and pressure (from left to right), using our well-balanced scheme, hydrostatically well-balanced scheme, and non-well-balanced scheme. Top: piecewise constant $(P^0)$ solution; bottom: quadratic $(P^2)$ solution.}\label{euler:transcom1}
\end{figure}

\begin{figure}[htb!]
  \centering
  \subfigure[ density $\rho$ ]{
  \centering
     \includegraphics[width= 4.8cm,scale=1]{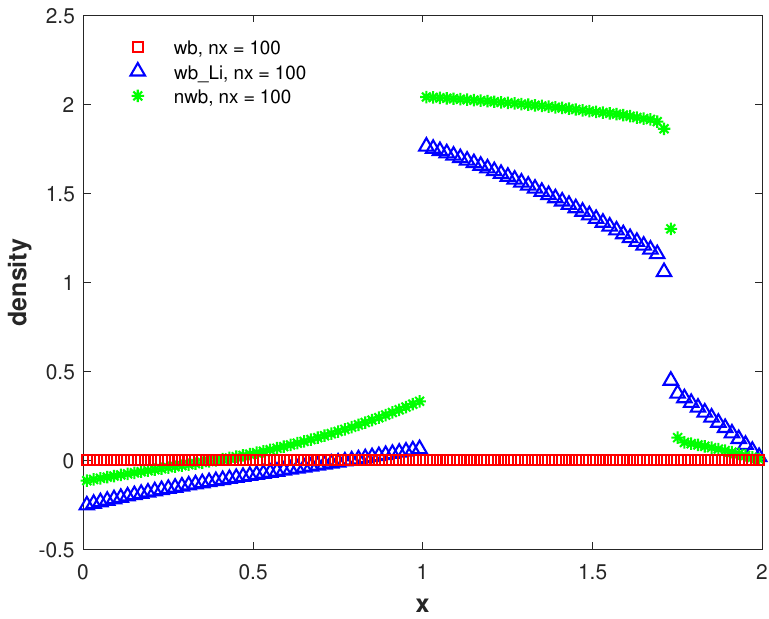}
  }
  \subfigure[velocity $u$]{
  \centering
     \includegraphics[width= 4.8cm,scale=1]{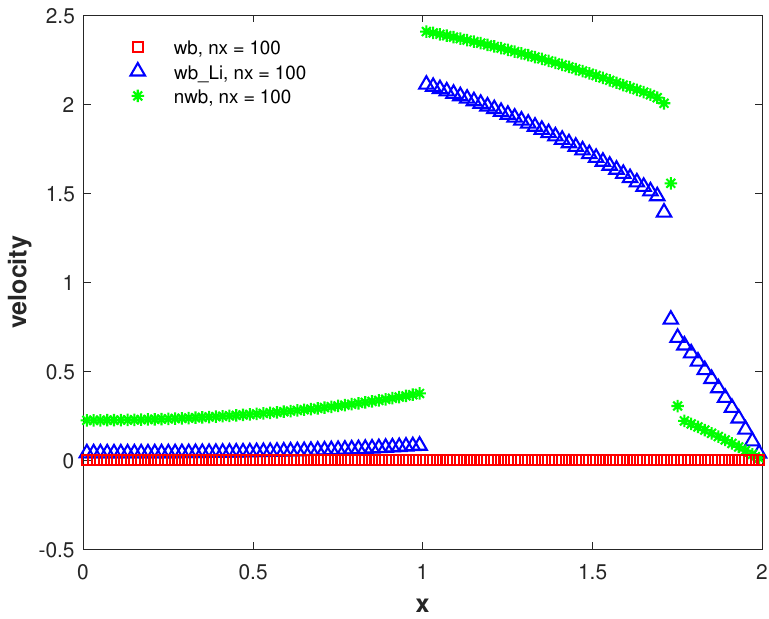}
  }
  \subfigure[pressure $p$]{
  \centering
     \includegraphics[width= 4.8cm,scale=1]{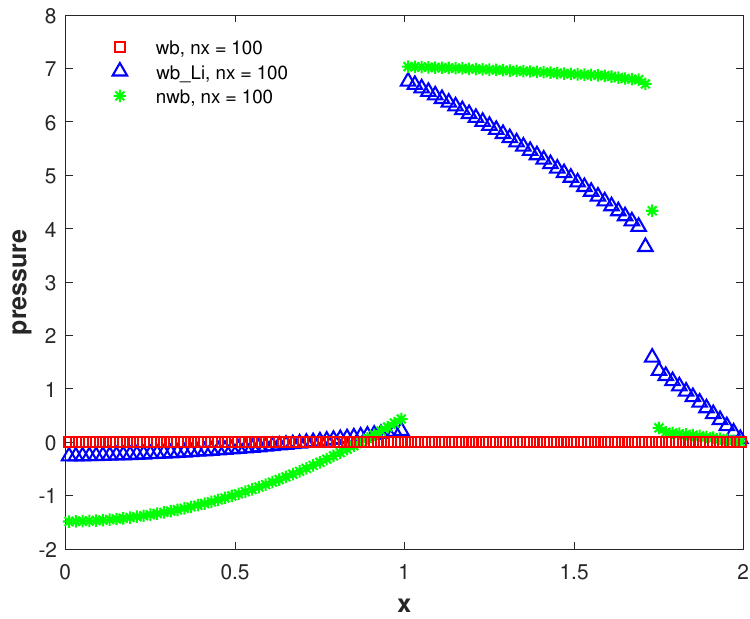}
  }
  \subfigure[ density $\rho$ ]{
  \centering
     \includegraphics[width= 4.8cm,scale=1]{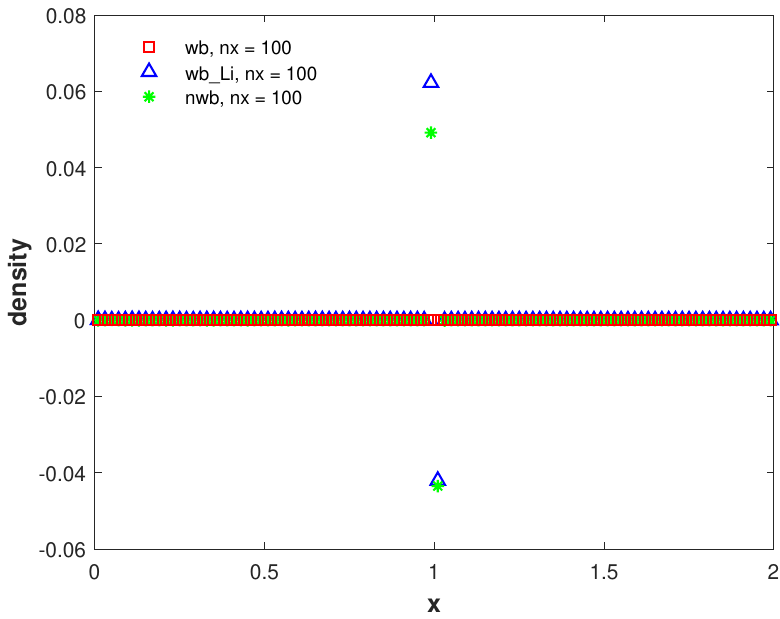}
  }
  \subfigure[velocity $u$]{
  \centering
     \includegraphics[width= 4.8cm,scale=1]{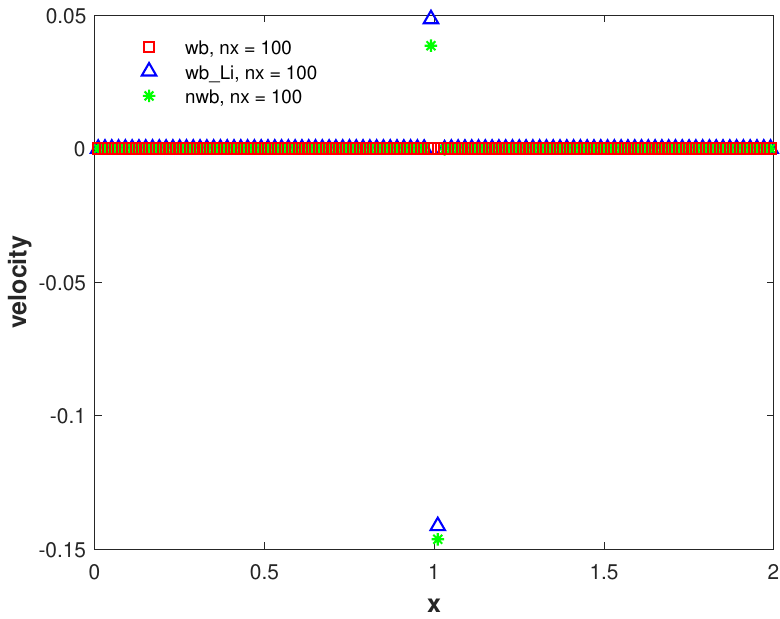}
  }
  \subfigure[pressure $p$]{
  \centering
     \includegraphics[width= 4.8cm,scale=1]{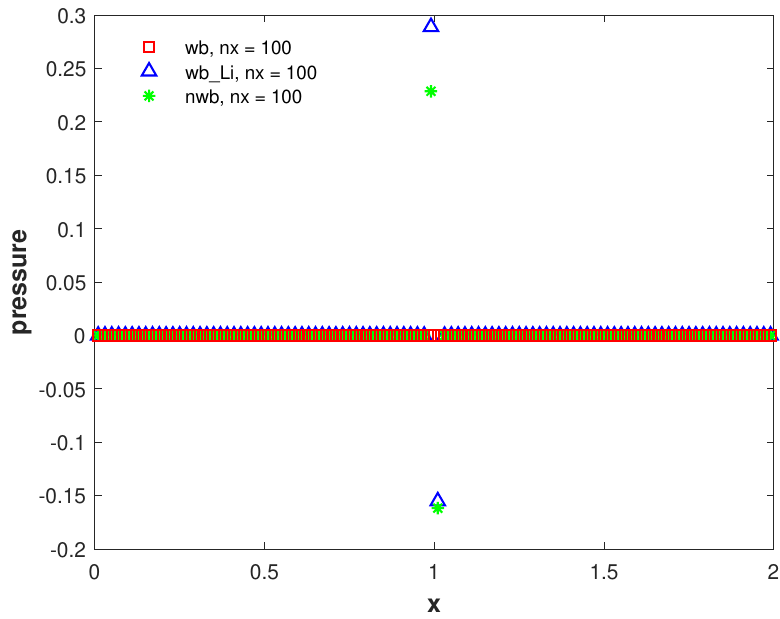}
  }
  \caption{Example \ref{euler:wb_aditr}: The difference between the numerical density, velocity, pressure (from left to right) and the original equilibrium state, using our well-balanced scheme, hydrostatically well-balanced scheme, and non-well-balanced scheme. Top: piecewise constant $(P^0)$ solution; bottom: quadratic $(P^2)$ solution.}\label{euler:transcom}
\end{figure}

\begin{example}{\bf Shock tube problem under gravitational fields }\label{euler:shock_tube}
\end{example}
We take the classical shock tube problem studied in \cite{li2016highfve}, which the initial conditions are given by
\begin{equation}\label{euler:sod_ini}
(\rho,u,p)(x,0) = \left\{\begin{array}{lll}
    (1,0,1),&  \text{if} \  x \leq 0.5, \\
    (0.125,0,0.1),&\text{otherwise},\\
    \end{array}\right.
\end{equation}
together with a linear gravitational field $\phi(x) = x$ on the computational domain $[0,1]$. The ratio of specific heats is $\gamma=1.4$. The boundary conditions are treated as solid walls. We run the simulation until $t=0.2$ with 200 and much refined 2000 cells for comparison. Fig. \ref{euler:shock2002} demonstrates the results of density $\rho$, velocity $u$, energy $E$, and pressure $p$, which achieve non-oscillatory and high-resolution and agree well with the reference solutions.
\begin{figure}[htb!]
  \centering
  \subfigure[density]{
  \centering
  \includegraphics[width=5cm,scale=1]{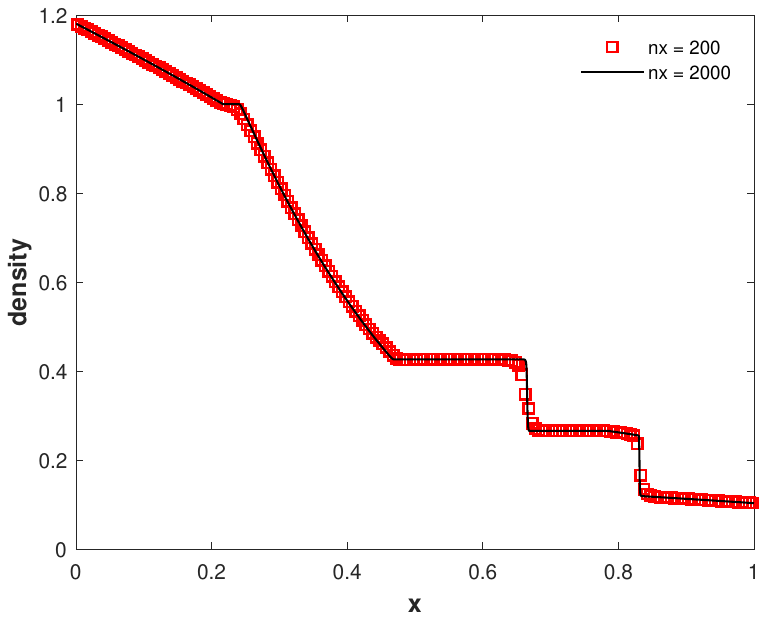}
  }
  \subfigure[velocity]{
  \centering
  \includegraphics[width=5cm,scale=1]{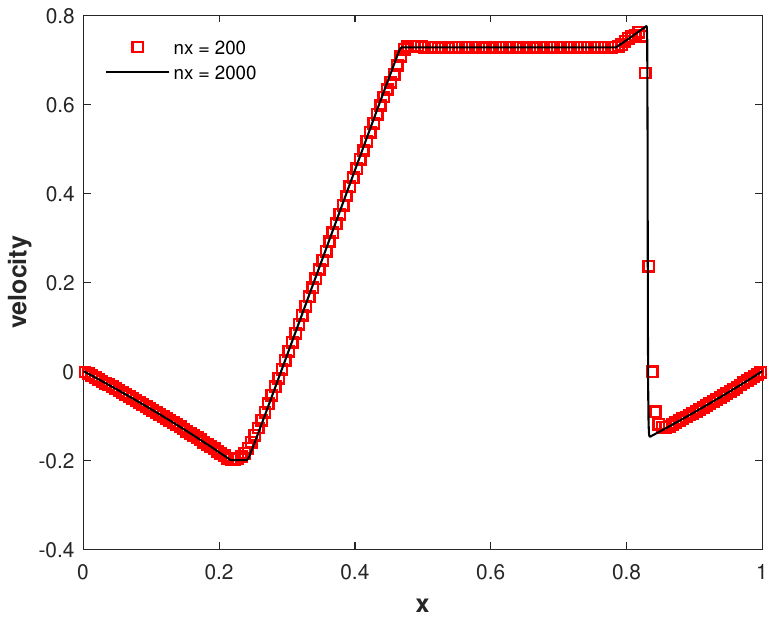}
  }
  \subfigure[energy]{
  \centering
  \includegraphics[width=5cm,scale=1]{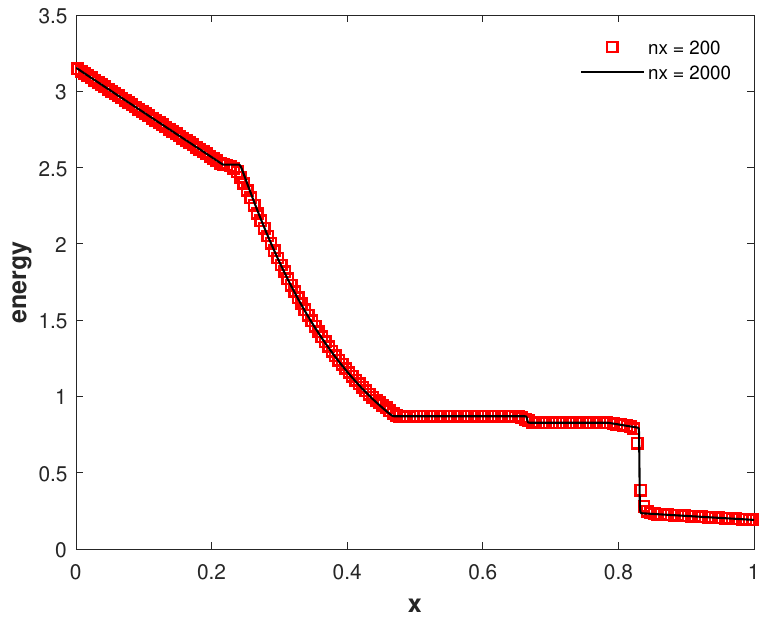}
  }
  \subfigure[pressure]{
  \centering
  \includegraphics[width=5cm,scale=1]{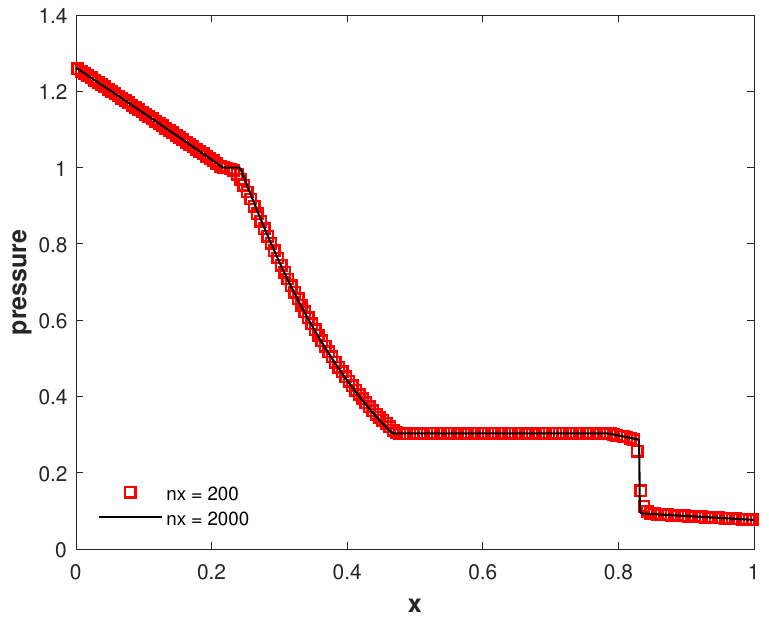}
  }
  \caption{Example \ref{euler:shock_tube}: Shock tube problem with initial value (\ref{euler:sod_ini}) at time $t=0.2$, using 200 and 2000 grid cells.}\label{euler:shock2002}
\end{figure}

\subsubsection{Two-dimensional tests}
\begin{example}{\bf Accuracy test}\label{euler:accuracy2D}
\end{example}
In this example, we test whether our scheme can achieve high order accuracy for a smooth solution in two-dimensional Euler equations.
The exact solutions on the computational domain $[0,2]\times[0,2]$ are given by
\begin{equation}\label{euler:acc2d}
  \begin{aligned}
    & \rho(x,y,t) = 1 + 0.2\sin(\pi(x+y-t(u_0+v_0))), \\
    & u(x,y,t) = u_0, \\
    & v(x,y,t) = v_0, \\
    & p(x,y,t) = p_0 + t(u_0+v_0) - x - y + 0.2\cos(\pi(x+y-t(u_0+v_0)))/\pi,
  \end{aligned}
\end{equation}
where the parameters $u_0=v_0=1,p_0=5.5$ and $\gamma=1.4$. The gravity is given by a linear gravitational potential $\phi_x = \phi_y=1$. The ghost points at the boundaries are set as the exact solutions. We compute the numerical solutions up to $t=0.1$ and show the  $L^1$ errors and the corresponding convergence rates in Table \ref{euler:DGacc2D}. It is found that the third order accuracy can be achieved by our proposed well-balanced DG scheme.
\begin{table}[htb]
  \centering
  \caption{Example \ref{euler:accuracy2D}:  $L^1$ errors and numerical orders of accuracy with initial condition (\ref{euler:acc2d}), $t = 0.1$.}\label{euler:DGacc2D}
  \begin{tabular}{c c c c c c c c }
  \toprule
    \multirow{2}{*} {$nx\times ny$} & \multicolumn{2}{c}{$\rho$}&\multicolumn{2}{c}{$ \rho u $}&\multicolumn{2}{c}{$ \rho v$}\\
   \cmidrule(lr){2-3} \cmidrule(lr){4-5} \cmidrule(lr){6-7}
    ~ &$L^1$ error &order&$L^1$ error &order&$L^1$ error &order\\
   \midrule
   10$\times$10 & 4.24E{-03} &     -- &  4.04E{-03} &  --  &  4.04E{-03} &  --  \\
   20$\times$20 & 5.71E{-04} &   2.89 &  4.80E{-04} & 3.07 &  4.80E{-04} & 3.07 \\
   40$\times$40 & 7.25E{-05} &   2.98 &  6.69E{-05} & 2.84 &  6.69E{-05} & 2.84 \\
   80$\times$80 & 8.69E{-06} &   3.06 &  9.98E{-06} & 2.74 &  9.98E{-06} & 2.74 \\
   160$\times$160 & 1.10E{-06} &   2.98 &  1.35E{-06} & 2.88 &  1.35E{-06} & 2.88 \\
   \midrule
   \multirow{2}{*} {$nx\times ny$} & \multicolumn{2}{c}{$E$}&\multicolumn{2}{c}{$ K $}&\multicolumn{2}{c}{$ \epsilon$}\\
   \cmidrule(lr){2-3} \cmidrule(lr){4-5} \cmidrule(lr){6-7}
    ~ &$L^1$ error &order&$L^1$ error &order&$L^1$ error &order\\
   \midrule
     10$\times$10 & 2.67E{-02} &     -- &  1.80E{-02} &    --  &  4.45E{-02} &     --  \\
     20$\times$20 & 3.48E{-03} &   2.94 &  2.31E{-03} &   2.96 &  5.72E{-03} &   2.96 \\
     40$\times$40 & 4.52E{-04} &   2.95 &  2.89E{-04} &   3.00 &  7.25E{-04} &   2.98 \\
     80$\times$80 & 5.71E{-05} &   2.98 &  3.55E{-05} &   3.03 &  9.12E{-05} &   2.99 \\
   160$\times$160 & 7.43E{-06} &   2.94 &  4.26E{-06} &   3.06 &  1.10E{-05} &   3.05 \\
  \bottomrule
  \end{tabular}
\end{table}

\begin{example}{\bf  Test for two-dimensional polytropic equilibrium}\label{euler:Poly1_2d}
\end{example}
To demonstrate the performance of our scheme for two-dimensional polytropic equilibrium, we consider an astrophysical problem studied in\cite{kappeli2014well}.
Let the computational domain be $[-0.5,0.5]\times[-0.5,0.5]$. The analytical hydrostatic equilibrium and the gravitation potential are given by
\begin{equation}\label{euler:hypoly_2d}
  \begin{aligned}
  &\rho(r) = \rho_c\dfrac{\sin(\alpha r)}{\alpha r}, \ u(r) =v(r) = 0, \ p(r) = \rho(r)^2, \\
  &\phi(r) = -2\rho_c\dfrac{\sin(\alpha r)}{\alpha r},
  \end{aligned}
\end{equation}
where $r = \sqrt{x^2+y^2}$ is the radial variable, the parameters are set as $\alpha = \sqrt{2\pi}$, $\rho_c=1$ and $\gamma=2$. We compute the numerical solutions up to $t = 0.5$ with 50$\times$50 uniform cells. The $L^1$ and $L^{\infty}$ errors at double precision are listed in Table \ref{euler:polywb_1002D}, from which the desired well-balanced property can be observed.

\begin{table}[htb]
  \centering
  \caption{Example \ref{euler:Poly1_2d}: $L^1$ and $L^{\infty}$ errors for two-dimensional polytropic hydrostatic equilibrium flow (\ref{euler:hypoly_2d}). }\label{euler:polywb_1002D}
  \begin{tabular}{ c c c c c c c }
   \toprule
    {error}&{$\rho$}&{$\rho u$}&{$\rho v$}&{$E$}&{$K$}&{$\varepsilon$}\\
    \midrule
    $L^1$        & 7.40E{-15} &  3.56E{-15} & 3.65E{-15} &  5.02E{-15} & 1.79E{-14} & 1.77E{-14}\\
    $L^{\infty}$ & 2.98E{-14} &  1.78E{-14} & 1.57E{-14} &  2.00E{-14} & 6.45E{-14} & 6.71E{-14}\\
    \bottomrule
  \end{tabular}
\end{table}

Next, we add a small Gaussian hump perturbation to the initial pressure profile
\begin{equation}\label{euler:hyp_ppres_2d}
  \begin{aligned}
  p(r) = \rho(r)^2 + A\exp(-100r^2)
  \end{aligned}
\end{equation}
with a small amplitude $A=10^{-6}$. We set the stopping time $t = 0.2$ before the excited waves reach the boundary. Transmission boundary conditions are imposed. In order to make a comparison, we run the same test both by our well-balanced method and the traditional third-order non-well-balanced scheme with 100$\times$100 uniform cells, and illustrate the contours of the pressure perturbation and velocity in Fig. \ref{euler:hypoly1_wbnwb2d}. We can see a failure simulation by the non-well-balanced scheme. In contrast, our resulting method works well and preserves the axial symmetry, which indicates the significance of the well-balanced scheme in capturing the small disturbance to the equilibrium state.

\begin{figure}[htb!]
  \centering
  \subfigure[pressure perturbation]{
  \centering
  \includegraphics[width=5.5cm,scale=1]{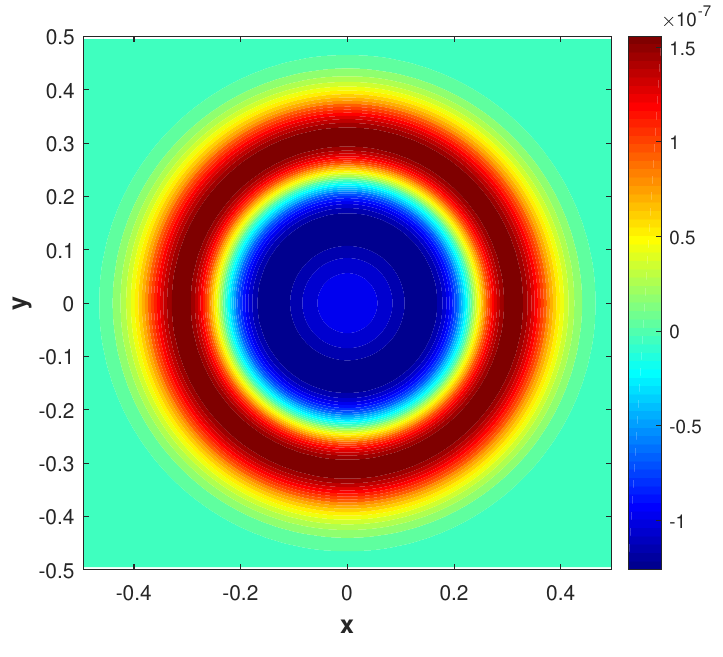}
  }
  \subfigure[velocity]{
  \centering
  \includegraphics[width=5.5cm,scale=1]{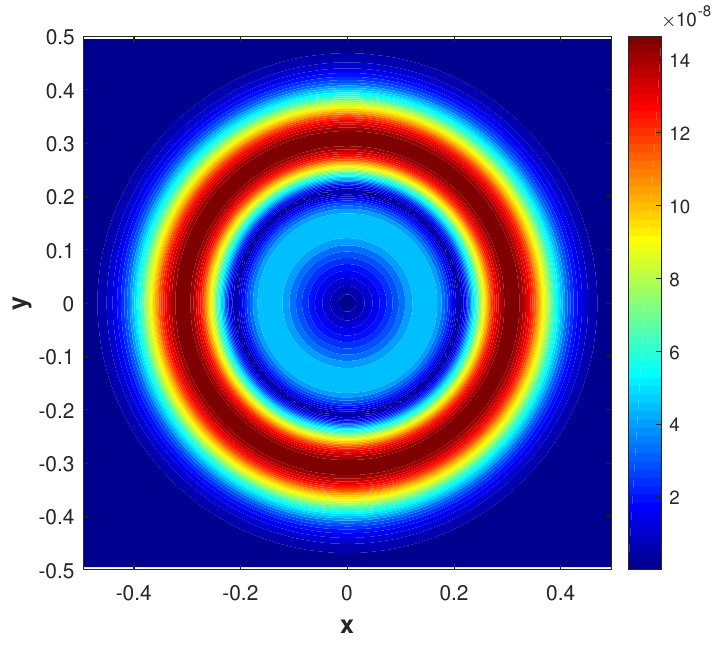}
  }
  \subfigure[pressure perturbation]{
  \centering
  \includegraphics[width=5.5cm,scale=1]{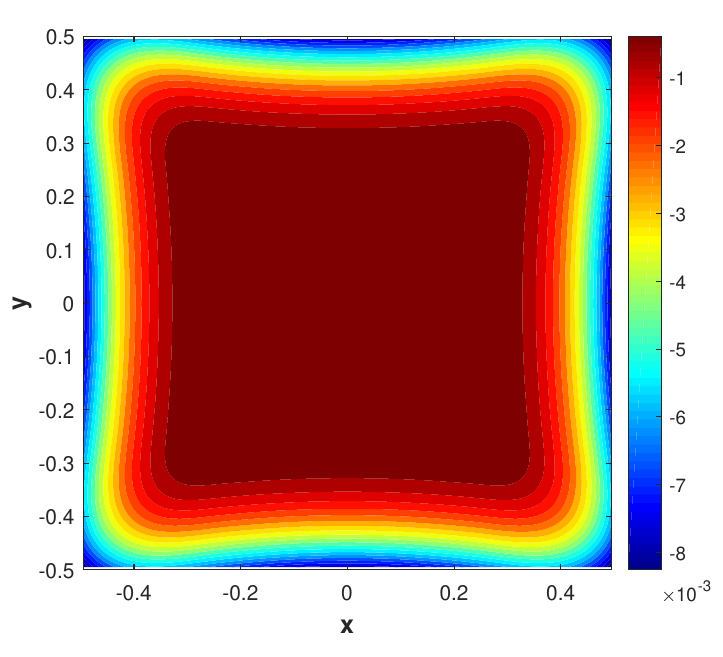}
  }
  \subfigure[velocity]{
  \centering
  \includegraphics[width=5.5cm,scale=1]{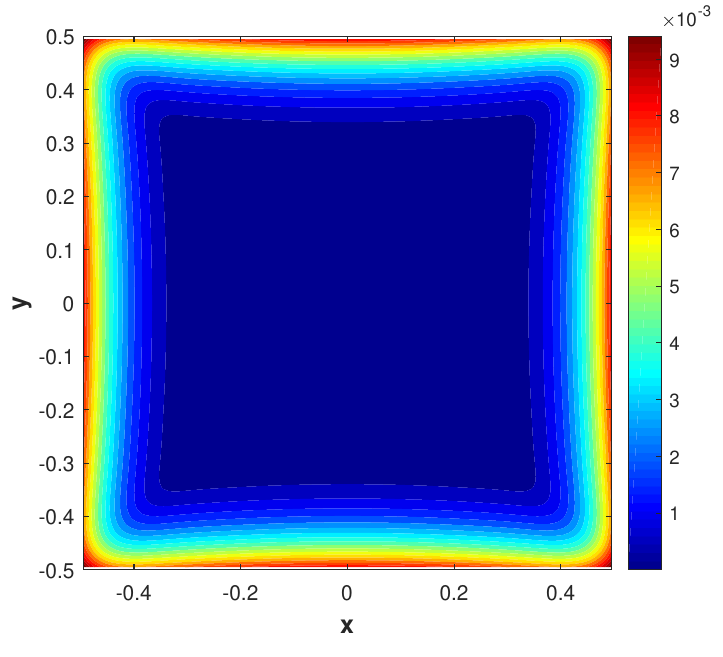}
  }
  \caption{Example \ref{euler:Poly1_2d}: The contours of the pressure perturbation and velocity $\sqrt{u^2+v^2}$ for the small amplitude $A=10^{-6}$ in (\ref{euler:hyp_ppres_2d}), with 20 uniformly spaced contour lines at time $t=0.2$, using 100$\times$100 cells. Top: results by our well-balanced DG scheme. Bottom: results by the non-well-balanced DG scheme. }\label{euler:hypoly1_wbnwb2d}
\end{figure}

\begin{example}{\bf Test for two-dimensional isentropic equilibrium}\label{euler:Poly2_2d}
\end{example}
This typical example we test is given by \cite{li2018welldge}, which shows the capability of our developed method for the preservation and perturbation of the isentropic hydrostatic equilibrium state in two dimensions.  The isentropic equilibrium state is given by
\begin{equation}\label{euler:isepoly_2d}
  \begin{aligned}
  &\rho(x,y) =  \left( 1-\frac{\gamma-1}{\gamma} \phi(x,y)\right)^{\frac{1}{\gamma-1}}, \\
  & u(x,y)=v(x,y) = 0,\\
  & p(x,y) = \left( 1-\frac{\gamma-1}{\gamma} \phi(x,y)\right)^{\frac{\gamma}{\gamma-1}},
  \end{aligned}
\end{equation}
coupled with the linear gravitation potential $ \phi(x,y) = x+y$ on the  computational domain $[0,1]\times[0,1]$. The ratio of specific heats is $\gamma=1.4$. We compute the numerical solutions up to $t = 0.5$ with 50$\times$50 uniform cells and present the $L^1$ and $L^{\infty}$ errors at double precision in Table \ref{euler:isewb_1002D}. We can observe that the well-balanced property is indeed maintained.

\begin{table}[htb]
  \centering
  \caption{Example \ref{euler:Poly2_2d}: $L^1$ and $L^{\infty}$ errors for two-dimensional isentropic hydrostatic equilibrium flow (\ref{euler:isepoly_2d}).}\label{euler:isewb_1002D}
  \begin{tabular}{ c c c c c c c}
   \toprule
    {error}&{$\rho$}&{$\rho u$}&{$\rho v$}&{$E$}&{$K$}&{$\varepsilon$}\\
    \midrule
    $L^1$        & 6.34E{-15} &  1.84E{-15} & 1.87E{-15} &  1.73E{-14} & 5.67E{-15} & 1.23E{-14}\\
    $L^{\infty}$ & 2.44E{-14} &  1.02E{-14} & 1.12E{-14} &  6.21E{-14} & 3.53E{-14} & 2.46E{-13}\\
    \bottomrule
  \end{tabular}
\end{table}

Subsequently, a small perturbation is added to the initial pressure profile
 \begin{equation}\label{euler:ise_ppres_2d}
  \begin{aligned}
  p(x,y,0) = p(x,y) + A\exp\left(-\frac{100\rho_0}{p_0}((x-0.3)^2+(y-0.3)^2)\right)
  \end{aligned}
\end{equation}
with the parameters $\rho_0=1.21,p_0=1$ and a small amplitude $A=10^{-6}$. Transmission boundary conditions are considered. The simulation is computed up to $t = 0.15$ with 100$\times$100 uniform cells. Fig. \ref{euler:hypoly2_wbnwb2d} displays the contours of the pressure perturbation and velocity calculated by our proposed method as well as the non-well-balanced method. It suggests that our well-balanced scheme performs better than the non-well-balanced scheme. It can achieve high resolution and capture the small features of the disturbance very well.

\begin{figure}[htb!]
  \centering
  \subfigure[pressure perturbation]{
  \centering
  \includegraphics[width=5.5cm,scale=1]{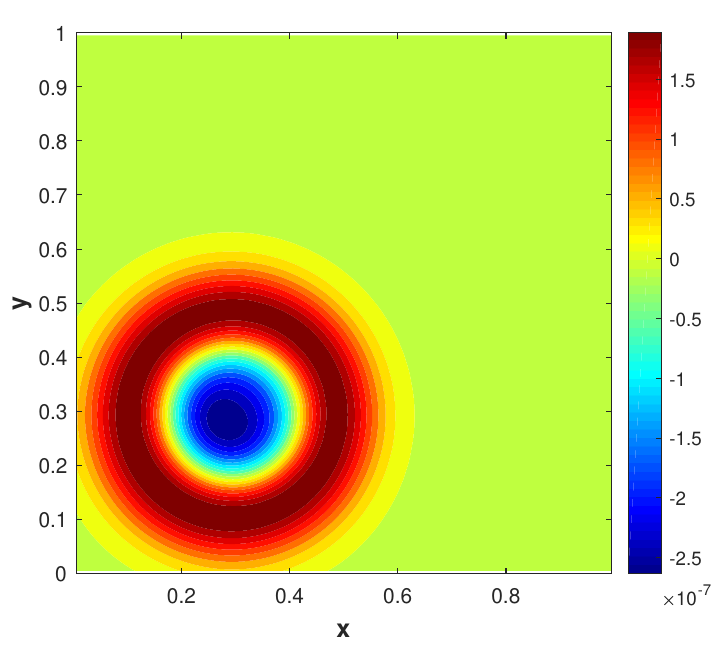}
  }
  \subfigure[velocity]{
  \centering
  \includegraphics[width=5.5cm,scale=1]{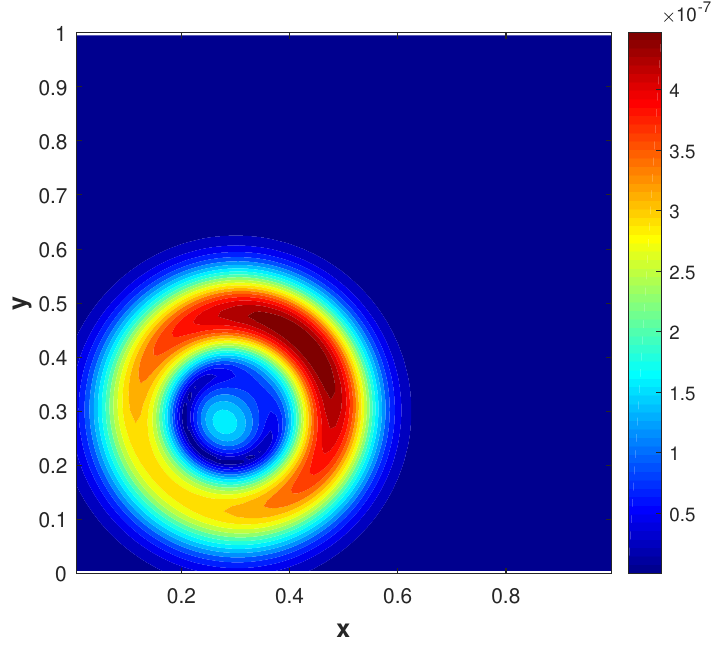}
  }
  \subfigure[pressure perturbation]{
  \centering
  \includegraphics[width=5.5cm,scale=1]{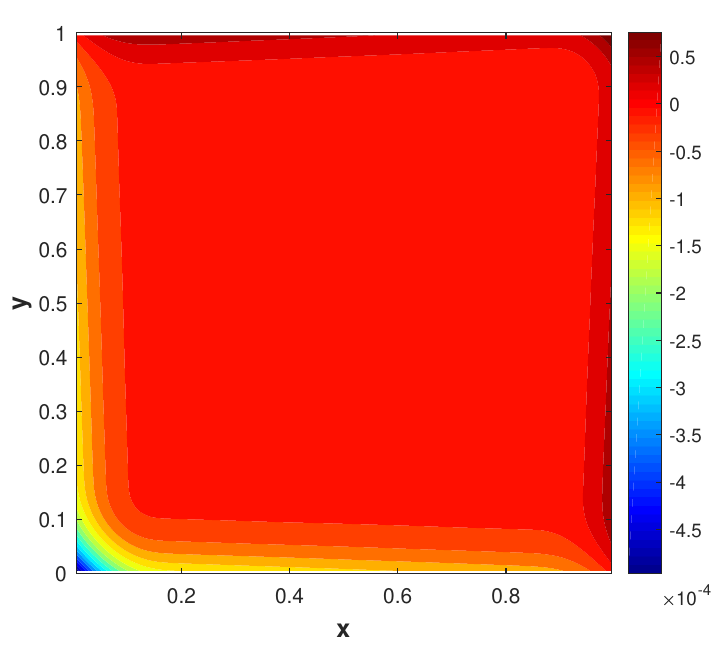}
  }
  \subfigure[velocity]{
  \centering
  \includegraphics[width=5.5cm,scale=1]{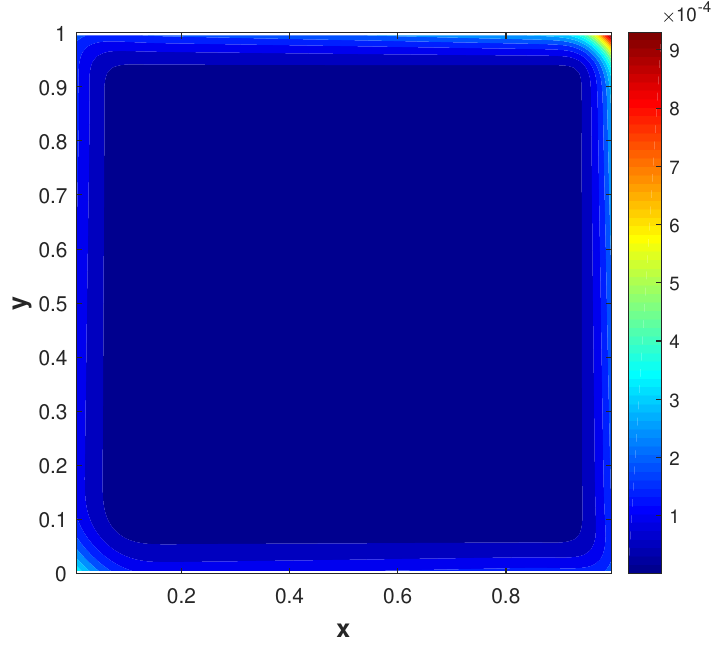}
  }
  \caption{Example \ref{euler:Poly2_2d}: The contours of the pressure perturbation and velocity $\sqrt{u^2+v^2}$ for the small amplitude $A=10^{-6}$ in (\ref{euler:ise_ppres_2d}), with 20 uniformly spaced contour lines at time $t=0.15$, using 100$\times$100 cells. Top: results by our well-balanced DG scheme. Bottom: results by the non-well-balanced DG scheme. }\label{euler:hypoly2_wbnwb2d}
\end{figure}

\subsection{Ripa model}
\subsubsection{One-dimensional tests}

\begin{example}{\bf Test for the moving water well-balanced property}\label{Ripa:exactC1D_moving}
\end{example}
We will verify that the DG method proposed maintains the moving water well-balanced property in this example. Three different steady-state solutions, which consist of subcritical, supercritical, and transcritical flow, will be investigated over the following bottom topography
\begin{equation}\label{Ripa:smo1D_moving}
  b(x)=\left\{\begin{array}{lll}
    0.2-0.05(x-10)^2,&\text{if}\ x\in[8,12], \\
    0,&\text{otherwise},\\
    \end{array}\right.
\end{equation}
for a channel of 25 meters. Taken from \cite{britton2020high}, they are extensions of the classical numerical examples used in \cite{xing2014exactly} for testing the moving water equilibria of the shallow water equations.

\smallskip
{\noindent \textbf{(a) Subcritical flow}}\label{Ripa:exc:sub}
The initial conditions are given by
\begin{equation}\label{Ripa:inisub}
  E(x,0) = 22.06605 \cdot 5,  \ hu(x,0)=4.42\sqrt{5}, \ \theta(x,0) = 5,
\end{equation}
together with the boundary conditions
\begin{equation*}
  hu(0,t)=4.42\sqrt{5}, \ h(25,t) = 2.
\end{equation*}
The Froude number is initially smaller than one throughout the computational domain.

\smallskip
{\noindent \textbf{(b) Supercritical flow}}\label{Ripa:exc:super}
The initial conditions are given by
\begin{equation}\label{Ripa:inisuper}
  E(x,0) = 91.624 \cdot 5,  \ hu(x,0)=24\sqrt{5}, \ \theta(x,0) = 5,
\end{equation}
together with the boundary conditions
\begin{equation*}
  hu(0,t)=24\sqrt{5}, \ h(0,t) = 2.
\end{equation*}
The Froude number here is larger than one everywhere.

\smallskip
{\noindent \textbf{(c) Transcritical flow}}\label{Ripa:exc:tranwithout}
The initial conditions are given by
\begin{equation}\label{Ripa:initrwithout}
  E(x,0) = 11.0907140397782 \cdot 5, \ hu(x,0)=1.53\sqrt{5},\ \theta(x,0) = 5,
\end{equation}
together with the boundary conditions
\begin{equation*}
  hu(0,t)=1.53\sqrt{5}, \ h(25,t) = 0.405748088283403 \  \text{for the subsonic flow}.
\end{equation*}
The Froude number at the beginning is chosen as
\begin{equation*}
\left\{\begin{array}{lll}
    Fr_j(0)<1,&  \text{if} \   x_j <10, \\
    Fr_j(0)>1,&  \text{if} \   x_j >10.
    \end{array}\right.
\end{equation*}
As we can see, the above three cases are all given in terms of the equilibrium variables and are exactly in equilibrium for the initial data. We calculate the numerical solutions with 200 uniform cells and set the stopping time $t=1$. Table \ref{Ripa:wbmoving_smo1D} shows the $L^1$ and $L^{\infty}$ errors of the above three cases at double precision. We can see that the moving water equilibrium state is exactly preserved up to the round-off error by our proposed well-balanced DG scheme.

\begin{table}[htb]
  \centering
  \caption{Example \ref{Ripa:exactC1D_moving}: $L^1$ and $L^{\infty}$ errors for one-dimensional moving water equilibrium state.}\label{Ripa:wbmoving_smo1D}
  \begin{tabular}{ c c c c c c c}
   \toprule
    case &{error}&{$h$}&{$hu$}&{$h\theta$}&{$E$}&{$\theta$}\\
    \midrule
    \multirow{2}{*}{(a) }&$L^1$        & 6.24E{-14} & 4.36E{-14} &  3.25E{-13} &  2.37E{-12} &  1.64E{-14} \\
                        ~&$L^{\infty}$ & 3.38E{-15} & 6.57E{-15} &  1.71E{-14} &  1.29E{-13} &  4.12E{-15} \\
    \midrule
    \multirow{2}{*}{(b) }&$L^1$        & 5.98E{-14} & 3.38E{-13} &  2.82E{-13} &  1.61E{-11} &  1.10E{-14} \\
                       ~ &$L^{\infty}$ & 6.29E{-15} & 4.50E{-14} &  3.21E{-14} &  1.37E{-12} &  1.71E{-15} \\
    \midrule
    \multirow{2}{*}{(c) }&$L^1$        & 2.44E{-13} & 9.11E{-14} &  1.22E{-12} &  1.08E{-12} &  3.67E{-14} \\
                       ~ &$L^{\infty}$ & 7.79E{-13} & 2.75E{-13} &  3.90E{-12} &  8.72E{-13} &  1.38E{-14} \\
    \bottomrule
  \end{tabular}
\end{table}

\begin{example}{\bf Small perturbation of moving water equilibrium}\label{Ripa:permoving_smo1D}
\end{example}
The same example in \cite{britton2020high} demonstrates the ability to capture the propagation of small perturbations of moving water equilibria, which cannot be realized by a non-well-balanced scheme. Almost the same setup in Example \ref{Ripa:exactC1D_moving}, we impose a small pulse $A = 0.0001$ on the water height in the interval $[5.75,6.25]$, the discharge $hu$ and the temperature $\theta$ keep the same as in (\ref{Ripa:inisub}) - (\ref{Ripa:initrwithout}). We simulate with transmissive boundary conditions until stopping time $t=0.75$ for the first and third cases, $t=0.45$ for the second case, using 200 uniform grid cells and a finer mesh of 1000 cells for comparison. Figs. \ref{Ripa:persubsmall}, \ref{Ripa:persupsmall} and \ref{Ripa:pertranssmall} display the numerical results of the surface level $h+b$, discharge $hu$ and $h\theta$ for the subcritical, supercritical and transcritical flow respectively. It can be seen that our developed method is successful in giving well-resolved and oscillatory-free solutions.

\begin{figure}[htb!]
  \centering
  \subfigure[surface level $h+b$]{
  \centering
  \includegraphics[width=4.9cm,scale=1]{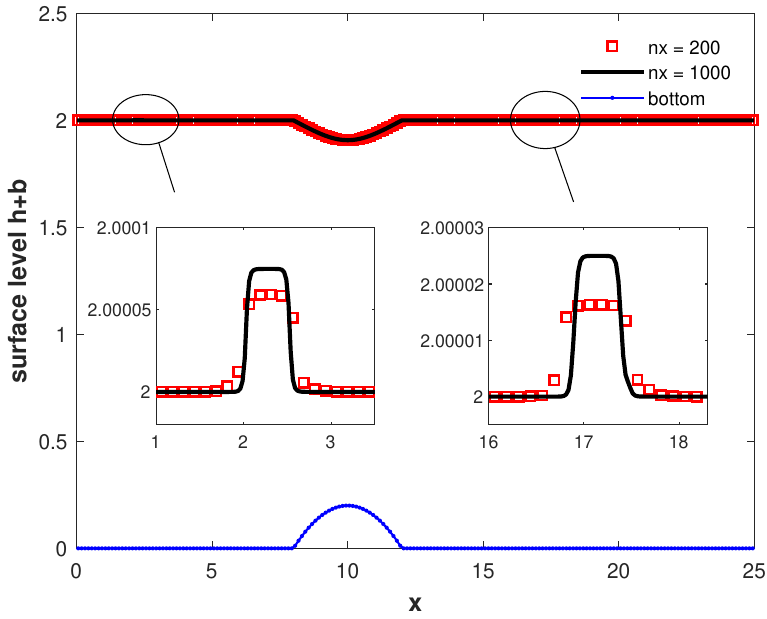}
  }
  \subfigure[discharge $hu$]{
  \centering
  \includegraphics[width=4.9cm,scale=1]{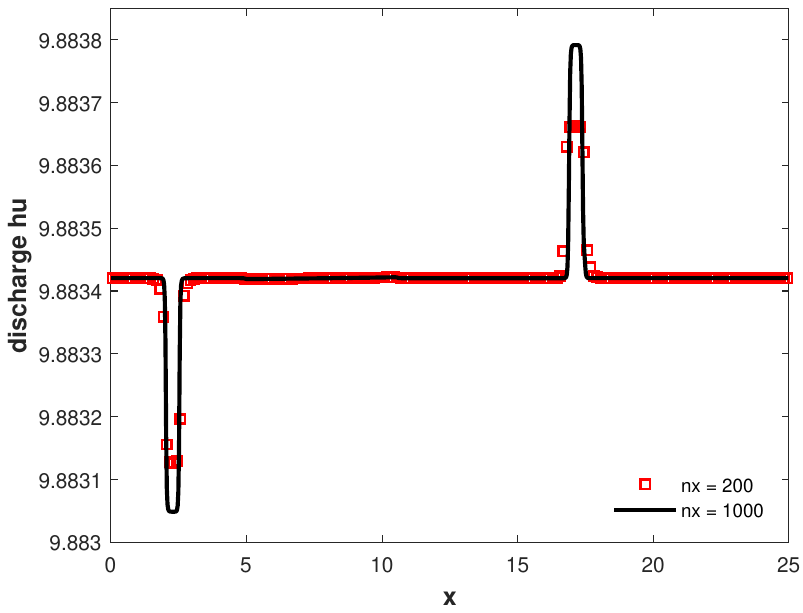}
  }
  \subfigure[ $h\theta$]{
  \centering
  \includegraphics[width=4.9cm,scale=1]{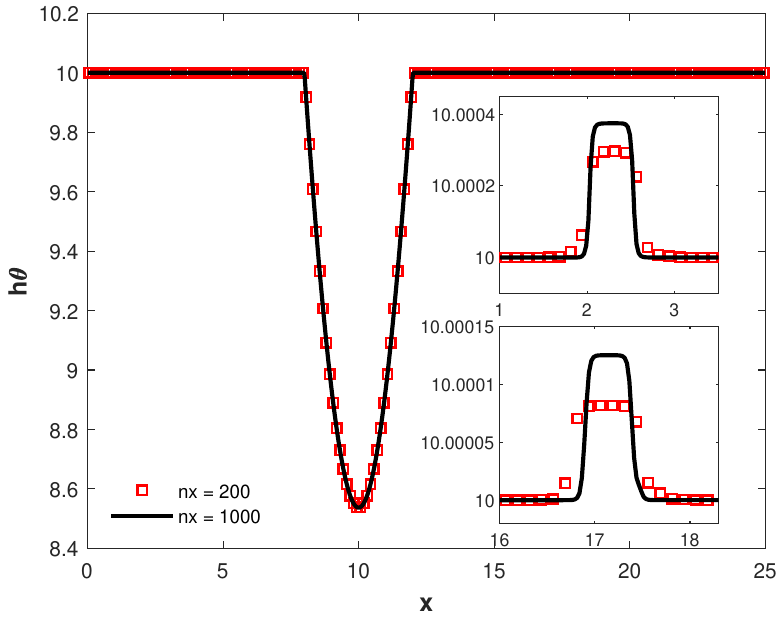}
  }
  \caption{Example \ref{Ripa:permoving_smo1D}: Small perturbation of the subcritical flow for the small pulse $A=0.0001$, using 200 and 1000 grid cells at time $t=0.75$. }\label{Ripa:persubsmall}
\end{figure}

\begin{figure}[htb!]
  \centering
  \subfigure[surface level $h+b$]{
  \centering
  \includegraphics[width=4.9cm,scale=1]{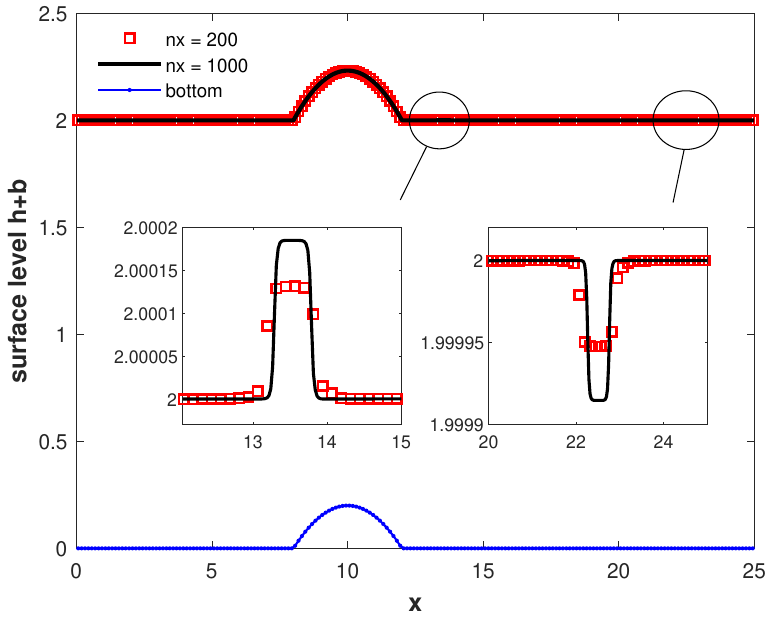}
  }
  \subfigure[discharge $hu$]{
  \centering
  \includegraphics[width=4.9cm,scale=1]{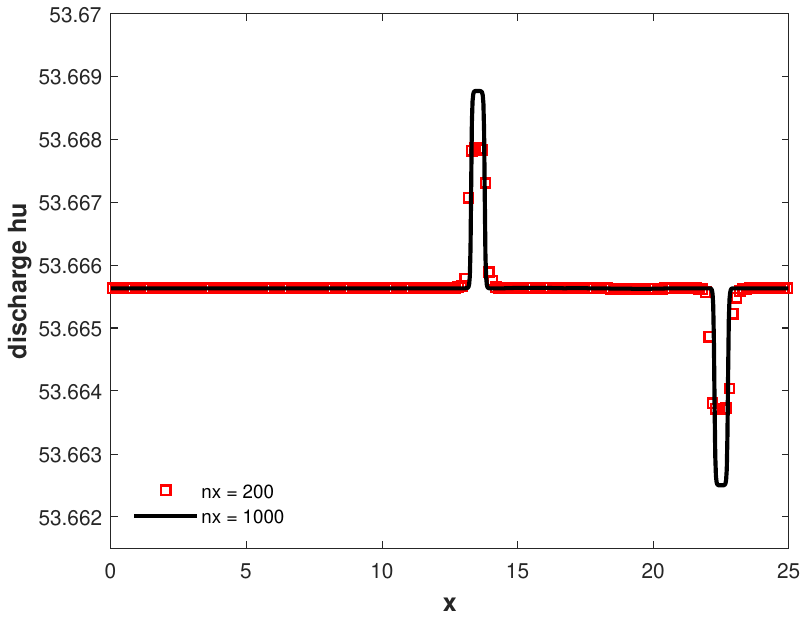}
  }
  \subfigure[ $h\theta$]{
  \centering
  \includegraphics[width=4.9cm,scale=1]{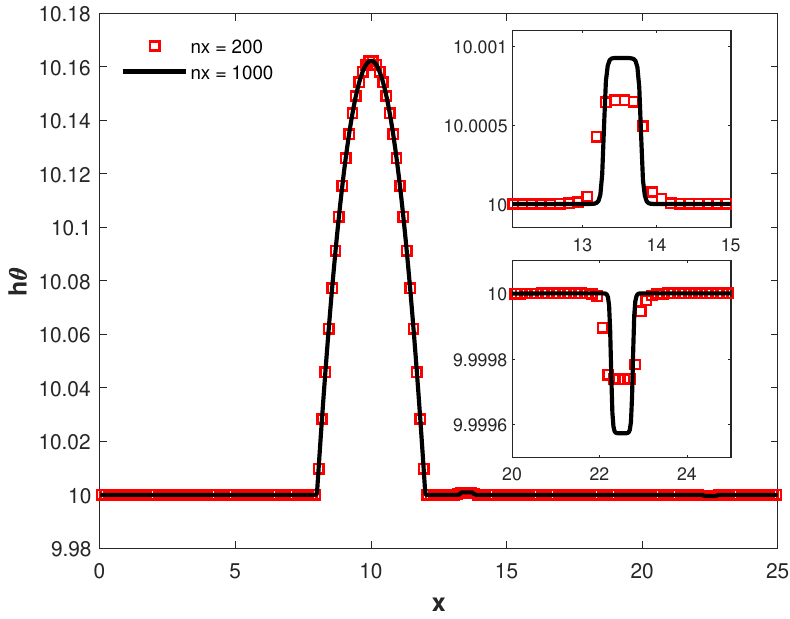}
  }
  \caption{Example \ref{Ripa:permoving_smo1D}: Small perturbation of the supercritical flow for the small pulse $A=0.0001$, using 200 and 1000 grid cells at time $t=0.45$. }\label{Ripa:persupsmall}
\end{figure}

\begin{figure}[htb!]
  \centering
  \subfigure[surface level $h+b$]{
  \centering
  \includegraphics[width=4.9cm,scale=1]{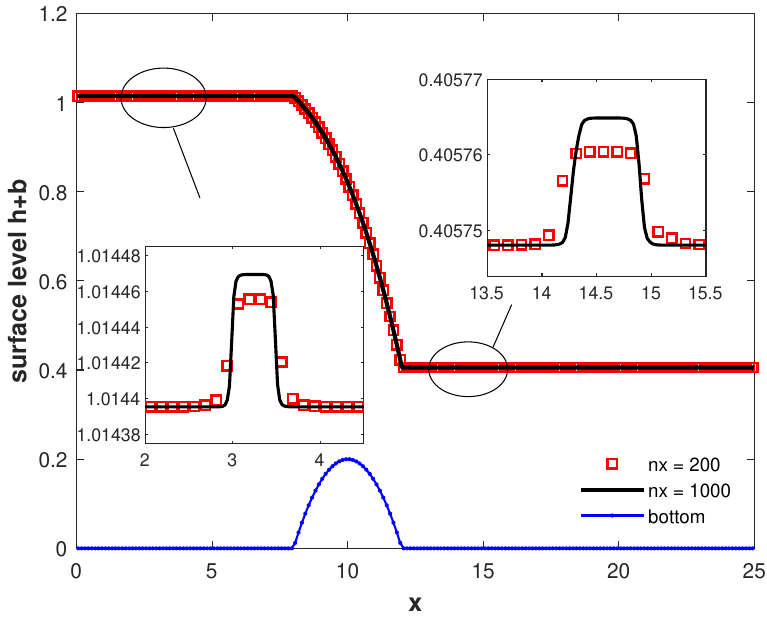}
  }
  \subfigure[discharge $hu$]{
  \centering
  \includegraphics[width=4.9cm,scale=1]{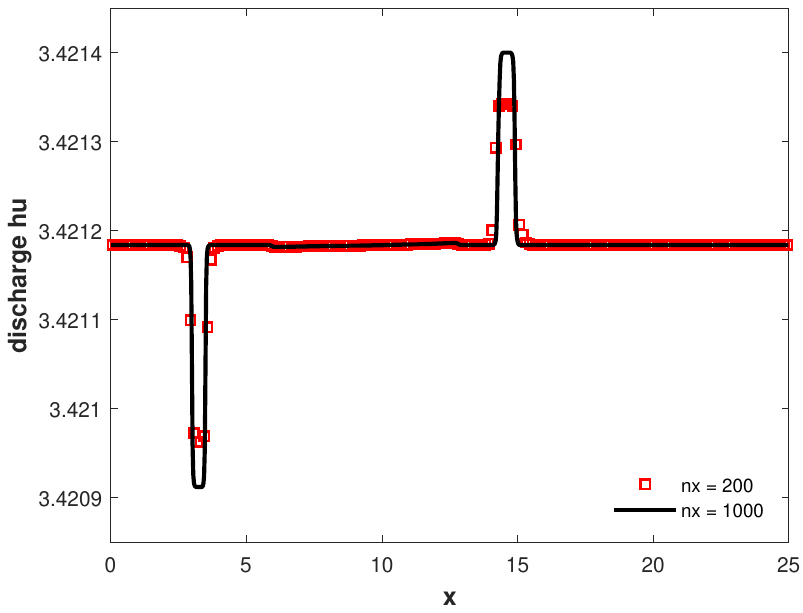}
  }
  \subfigure[ $h\theta$]{
  \centering
  \includegraphics[width=4.9cm,scale=1]{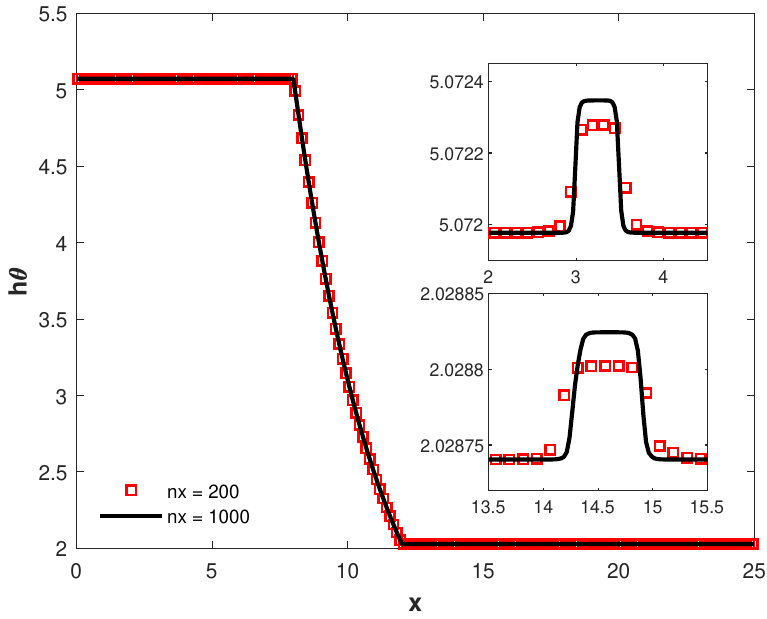}
  }
  \caption{Example \ref{Ripa:permoving_smo1D}: Small perturbation of the transcritical flow for the small pulse $A=0.0001$, using 200 and 1000 grid cells at time $t=0.75$.
}\label{Ripa:pertranssmall}
\end{figure}

\begin{example}{\bf The dam breaking problems}\label{Ripa:dam}
\end{example}
To demonstrate the capability of our developed well-balanced DG scheme when containing discontinuous solutions, we consider three dam breaking problems.
In our simulation, we treat the solution obtained by the third order still water equilibria preserving DG scheme \cite{qian2018high} (denoted by `wb$\_$still') on 3000 uniform grid cells as the reference solution.

First, multiple Riemann problems with nonzero initial velocity over a flat bottom ($b(x)=0$) are considered on a computational domain $[-1,1]$. The initial conditions are given in the following way
\begin{equation}\label{Ripa:dam_flat2}
(h,u,\theta)(x,0) = \left\{\begin{array}{lll}
    (2,0.75,1),&  \text{if} \  |x| \leq 0.5, \\
    (1,0.5,1.5),&\text{otherwise}.\\
    \end{array}\right.
\end{equation}
We compute the solutions up to $t=0.075$ with transmissive boundary conditions. The numerical solutions of the surface level $h+b$, discharge $hu$, $h\theta$, velocity $u$, temperature $\theta$ and pressure $\frac{1}{2}gh^2\theta$ with 200 uniform cells are displayed in Fig. \ref{Ripa:fd}.  We can observe that the numerical results reach an amicable agreement with the reference solution, and the discontinuity is well captured without oscillation.

\begin{figure}[htb!]
  \centering
  \subfigure[surface level $h+b$]{
  \centering
  \includegraphics[width=4.5cm,scale=1]{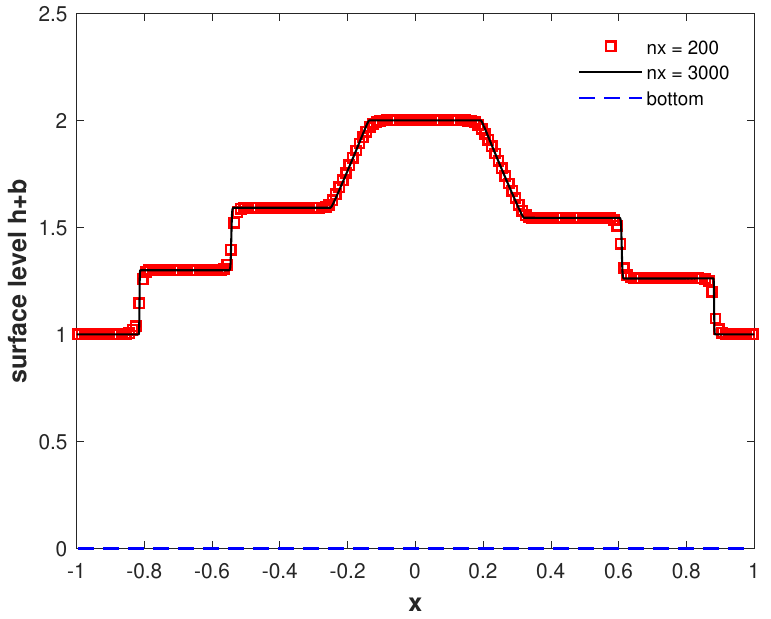}
  }
  \subfigure[discharge $hu$]{
  \centering
  \includegraphics[width=4.5cm,scale=1]{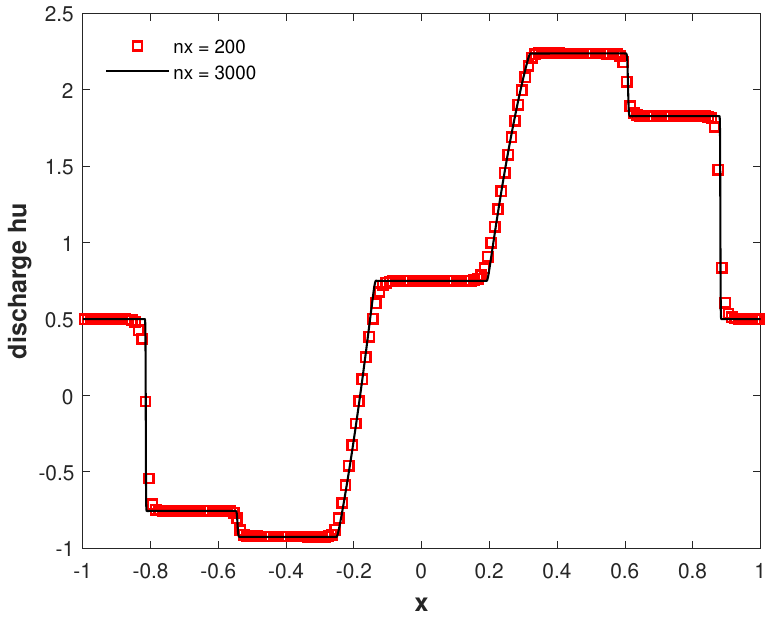}
  }
  \subfigure[velocity $u$]{
  \centering
  \includegraphics[width=4.5cm,scale=1]{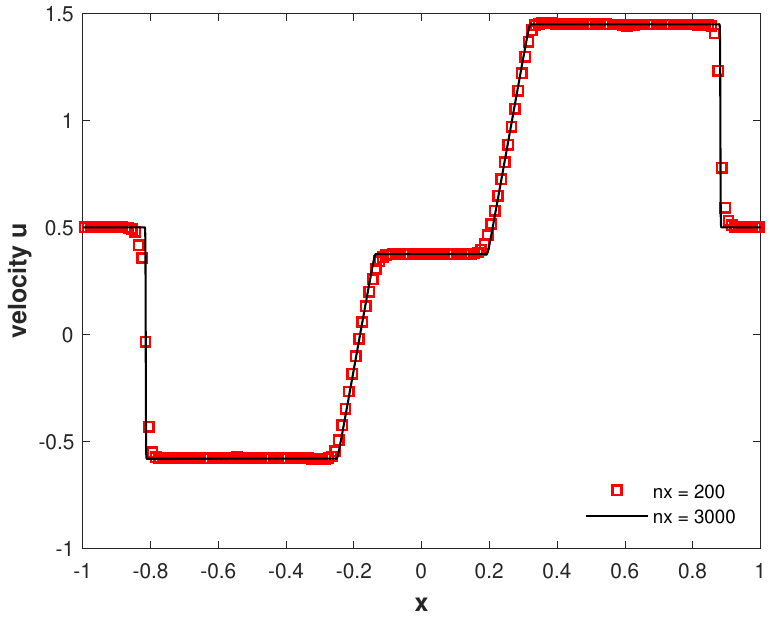}
  }
  \subfigure[ $h\theta$]{
  \centering
  \includegraphics[width=4.5cm,scale=1]{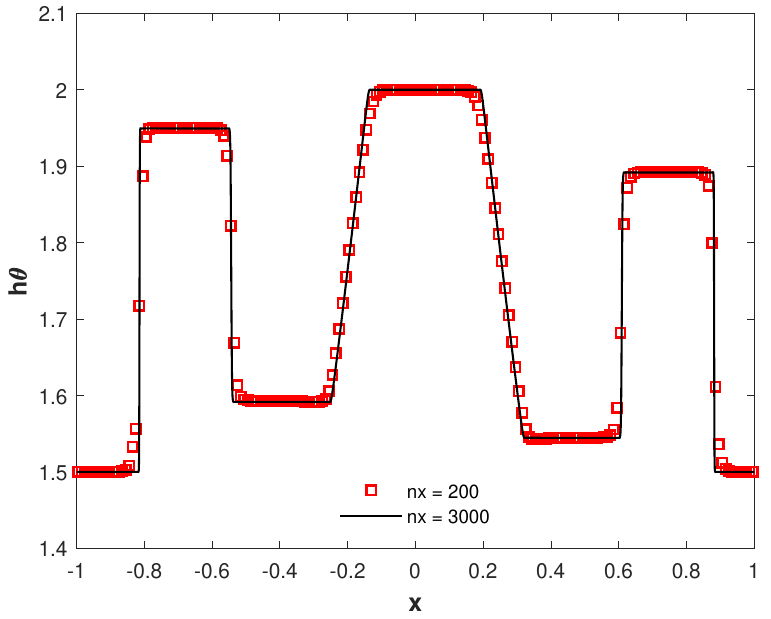}
  }
  \subfigure[temperature $\theta$]{
  \centering
  \includegraphics[width=4.5cm,scale=1]{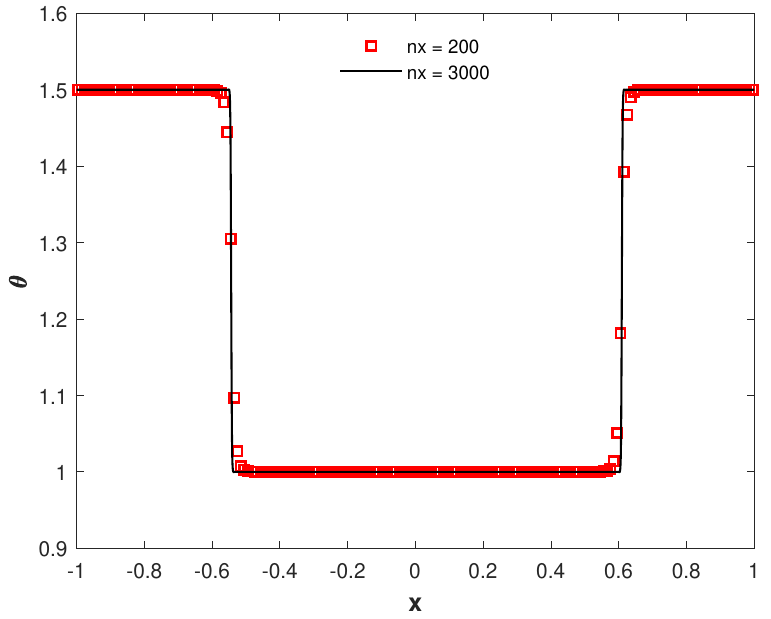}
  }
  \subfigure[ pressure $\frac{1}{2}gh^2\theta$]{
  \centering
  \includegraphics[width=4.5cm,scale=1]{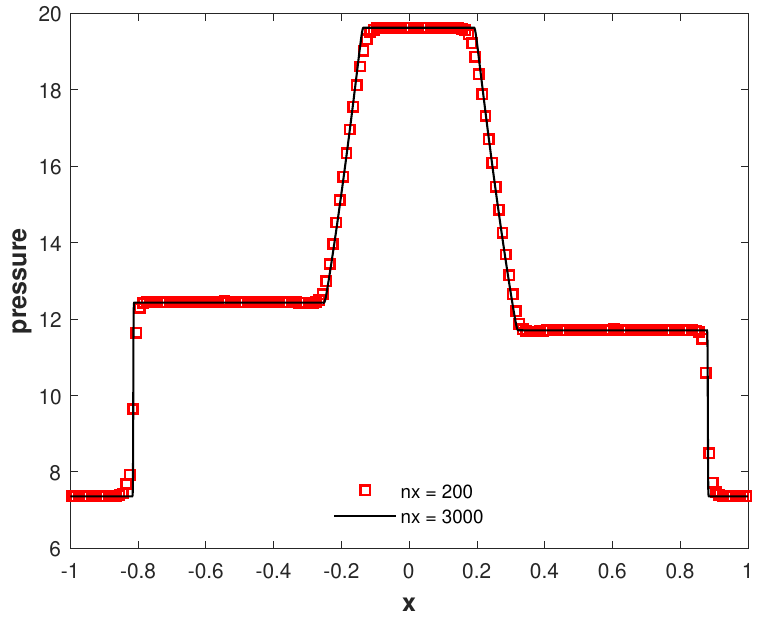}
  }
  \caption{Example \ref{Ripa:dam}: Numerical solutions for the dam breaking problem over a flat bottom topography with the initial value (\ref{Ripa:dam_flat2}) at time $t=0.075$.  }\label{Ripa:fd}
\end{figure}

Subsequently, another dam breaking problem over a non-flat bottom topography is considered on the computational domain $[-1,1]$. The bottom function which includes two bumps is defined as
\begin{equation}\label{Ripa:condam_bottom}
  b(x)=\left\{\begin{array}{lll}
    0.5(\cos(10\pi(x+0.3))+1), &\text{if}\ x\in[-0.4,-0.2], \\
    0.75(\cos(10\pi(x-0.3))+1),&\text{if}\ x\in[0.2,0.4],\\
    0,&\text{otherwise},\\
    \end{array}\right.
\end{equation}
and the initial conditions are given by
\begin{equation}\label{Ripa:condam_ini}
(h,u,\theta)(x,0) = \left\{\begin{array}{lll}
    (5-b(x),0,3),&  \text{if} \  x \leq 0, \\
    (2-b(x),0,5),&\text{otherwise}.\\
    \end{array}\right.
\end{equation}
We compute the solutions up to $t=0.045$ with transmissive boundary conditions. Fig. \ref{Ripa:cd} presents the corresponding solutions with 200 uniform cells. The numerical results realize non-oscillatory and match the reference ones well, which indicates the effectiveness of our proposed well-balanced DG scheme.
\begin{figure}[htb!]
  \centering
  \subfigure[surface level $h+b$]{
  \centering
  \includegraphics[width=4.5cm,scale=1]{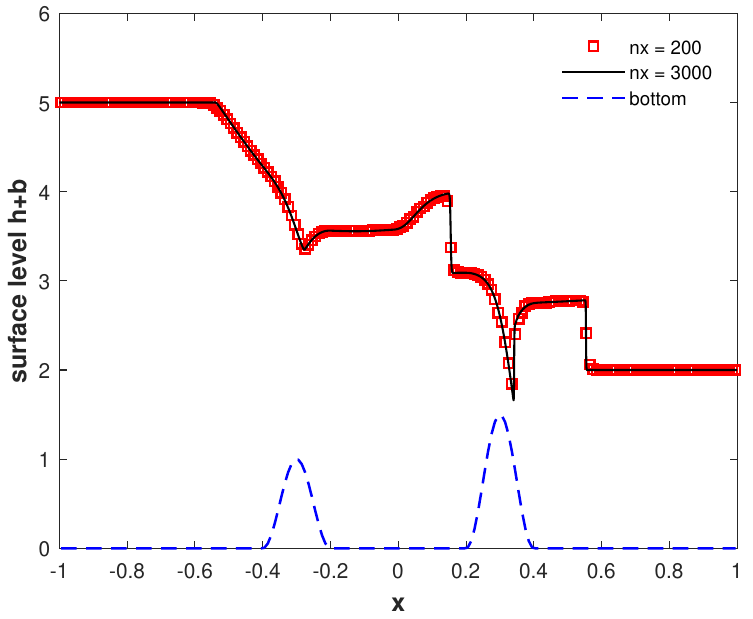}
  } 
  \subfigure[ temperature $\theta$]{
  \centering
  \includegraphics[width=4.5cm,scale=1]{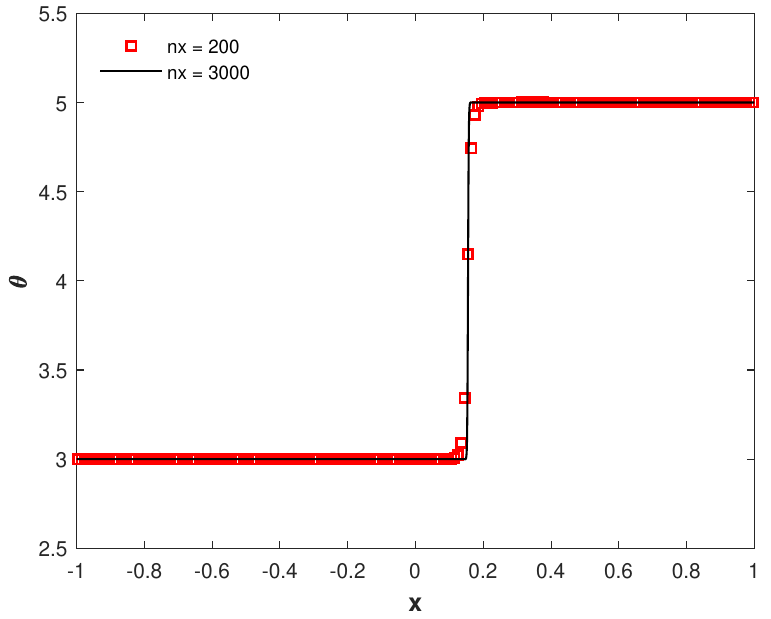}
  }
  \subfigure[pressure $\frac{1}{2}gh^2\theta$]{
  \centering
  \includegraphics[width=4.5cm,scale=1]{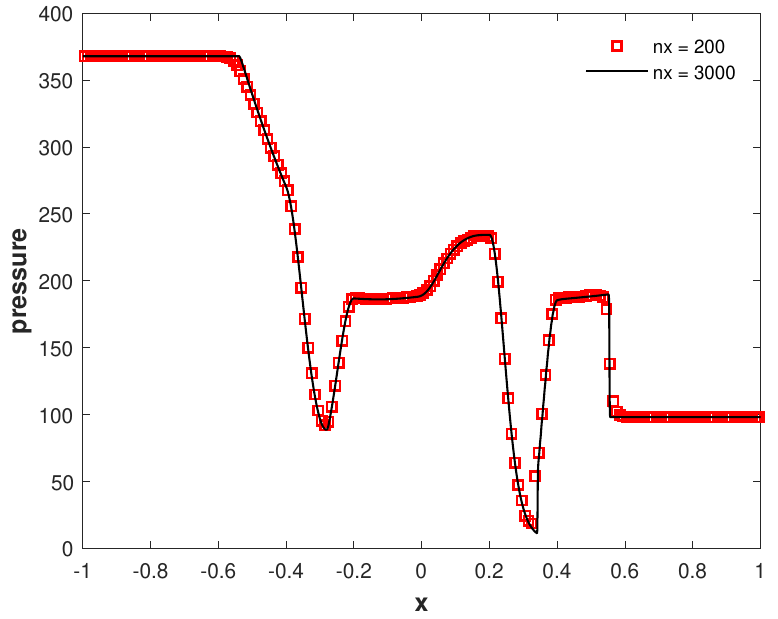}
  }
  \caption{Example \ref{Ripa:dam}: Numerical solutions for the dam breaking problem over a continuous bottom topography (\ref{Ripa:condam_bottom}) with the initial value (\ref{Ripa:condam_ini}) at time $t=0.045$.  }\label{Ripa:cd}
\end{figure}

Lastly, we implement a dam breaking test case used in \cite{britton2020high} over a rectangular bump, which involves rapidly varying flow over a discontinuous bottom topography on a domain $[0,600]$
\begin{equation}\label{Ripa:bdam}
  b(x)=\left\{\begin{array}{lll}
    8,&   \text{if} \ \   225\leq x \leq 375, \\
    0,&\text{otherwise},\\
    \end{array}\right.
\end{equation}
and the initial conditions are given by
\begin{equation}\label{Ripa:inidam}
 (h,u,\theta)(x,0) = \left\{\begin{array}{lll}
    (20-b(x),1,10),&  \text{if} \  x \leq 300, \\
    (15-b(x),5,5),&\text{otherwise}.\\
    \end{array}\right.
\end{equation}
The simulation is calculated up to $t=3$ with transmissive boundary conditions. We illustrate the results with 200 uniform cells in Fig. \ref{Ripa:rdu}. For comparison, much refined 3000 cell solutions by our proposed moving equilibria preserving scheme (solid black line) are also presented in the figure. We can see some numerical oscillations located at the points $x=225$ and $x=375$ for the discharge $hu$ calculated by the scheme \cite{qian2018high}, since the correct capturing of the water discharge is more difficult than the water height. By contrast, our scheme provides an oscillation-free solution for the discharge and realizes high-resolution interface tracking of fluid motion.
\begin{figure}[!ht]
  \centering
  \subfigure[surface level $h+b$]{
  \centering
  \includegraphics[width=4.5cm,scale=1]{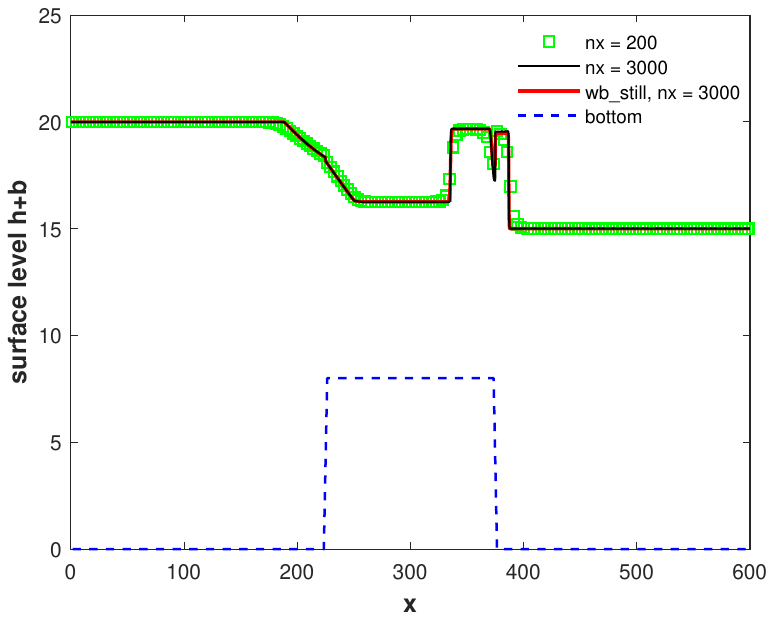}
  }
  \subfigure[discharge $hu$]{
  \centering
  \includegraphics[width=4.5cm,scale=1]{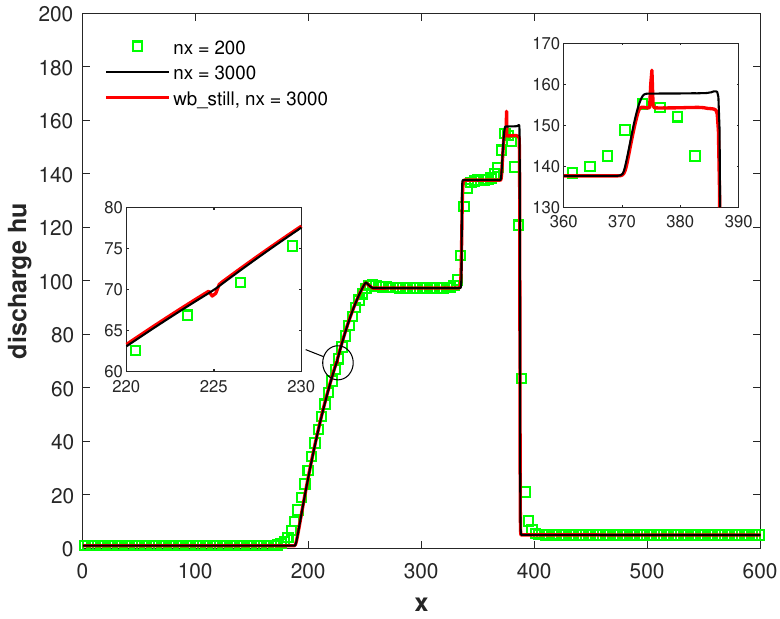}
  }
  \subfigure[velocity $u$]{
  \centering
  \includegraphics[width=4.5cm,scale=1]{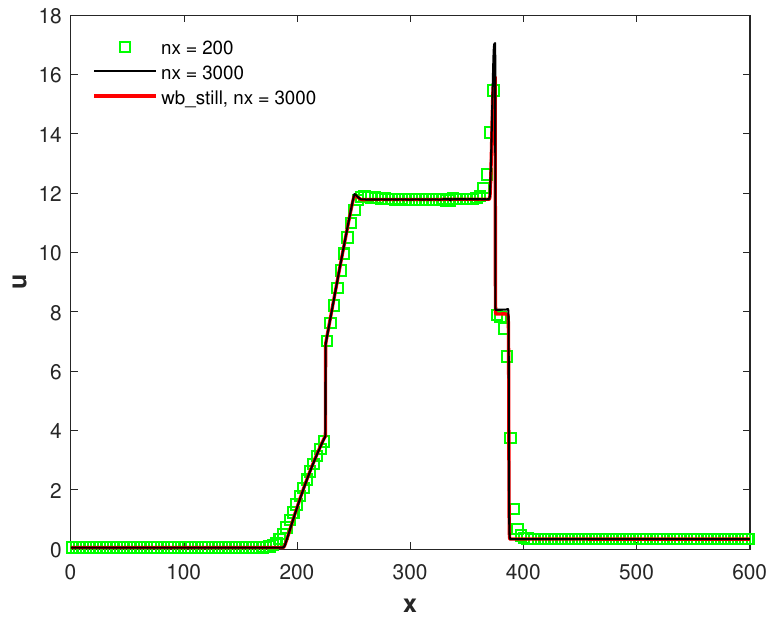}
  }
  \subfigure[ $h\theta$]{
  \centering
  \includegraphics[width=4.5cm,scale=1]{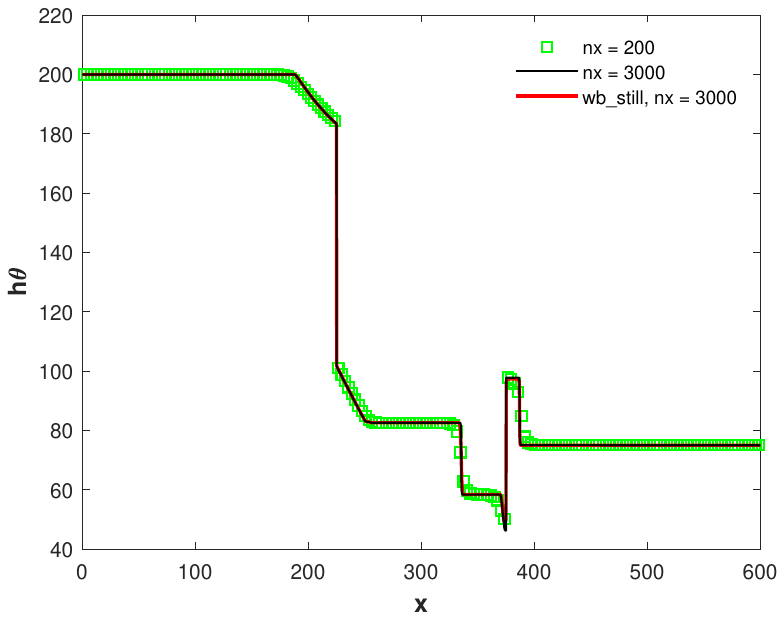}
  }
  \subfigure[temperature $\theta$]{
  \centering
  \includegraphics[width=4.5cm,scale=1]{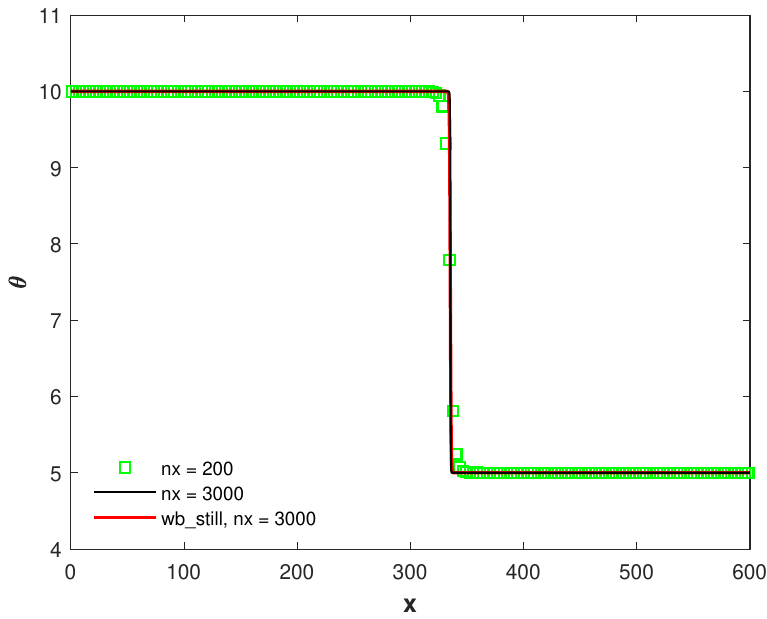}
  }
  \subfigure[pressure $\frac{1}{2}gh^2\theta$]{
  \centering
  \includegraphics[width=4.5cm,scale=1]{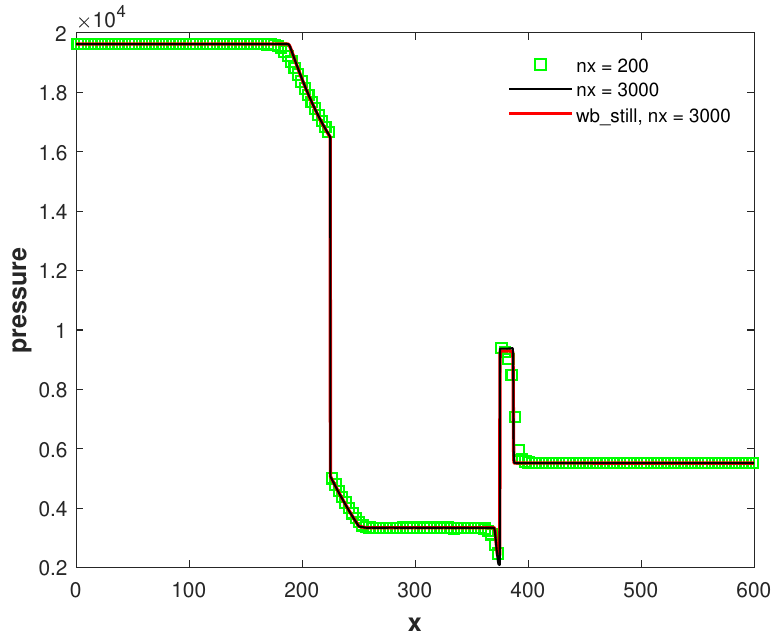}
  }
  \caption{Example \ref{Ripa:dam}: Numerical solutions for the dam breaking problem over a discontinuous bottom topography (\ref{Ripa:bdam}) with the initial value (\ref{Ripa:inidam}) at time $t=3$.}\label{Ripa:rdu}
\end{figure}

\begin{example}{\bf Pressure oscillation free}\label{Ripa:isobaric1D}
\end{example}
We take the example in \cite{chertock2014central} to testify the isobaric equilibrium preserving of our proposed DG method. The gravitation constant is set as $g=1$ in this test. The initial conditions on the computational domain $[-1000,1000]$ with the bottom function $b(x)=0$ are given by
\begin{equation}\label{Ripa:isoini}
(h,u,\theta)(x,0) = \left\{\begin{array}{lll}
    (2\sqrt{2},0,1),&  \text{if} \  x < 0, \\
    (1,0,8),&\text{otherwise}.\\
    \end{array}\right.
\end{equation}
It can be seen the pressure $p$ equals to 4 in the entire domain. We compute the solutions up to $t=10$ with 2000 uniform cells and display the corresponding surface level $h+b$, velocity $u$, and pressure $p$ in Fig. \ref{Ripa:pof}. Both the moving equilibria preserving DG scheme \eqref{scheme2d}-\eqref{flux2d} (denoted by `moving') and the isobaric equilibria preserving DG scheme \eqref{scheme:ripaiso2d}-\eqref{flux:ripaiso2d} (denoted by `isobaric') are shown for comparison. We can observe that the former produces spurious oscillations that do not disappear even with highly refined meshes. While the latter provides oscillation-free solutions, which validate the rationality of our isobaric equilibria preserving DG method.

\begin{figure}[htb!]
  \centering
  \subfigure[surface level $h+b$]{
  \centering
  \includegraphics[width=4.5cm,scale=1]{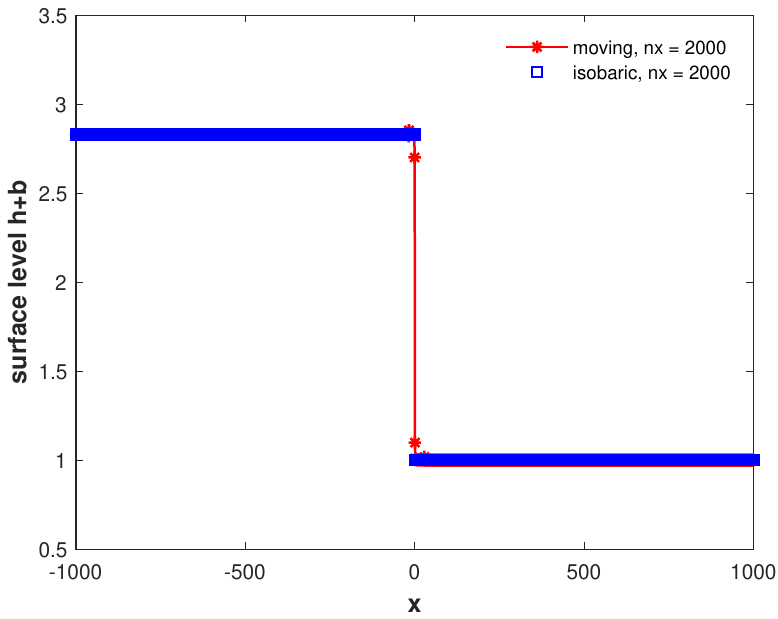}
  }
  \subfigure[velocity $u$]{
  \centering
  \includegraphics[width=4.5cm,scale=1]{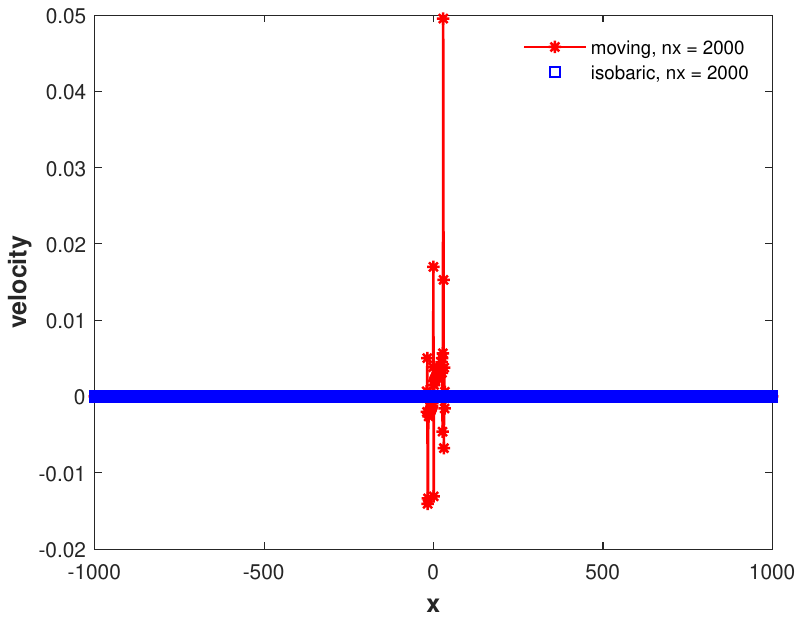}
  }
  \subfigure[ pressure $\frac{1}{2}gh^2\theta$]{
  \centering
  \includegraphics[width=4.5cm,scale=1]{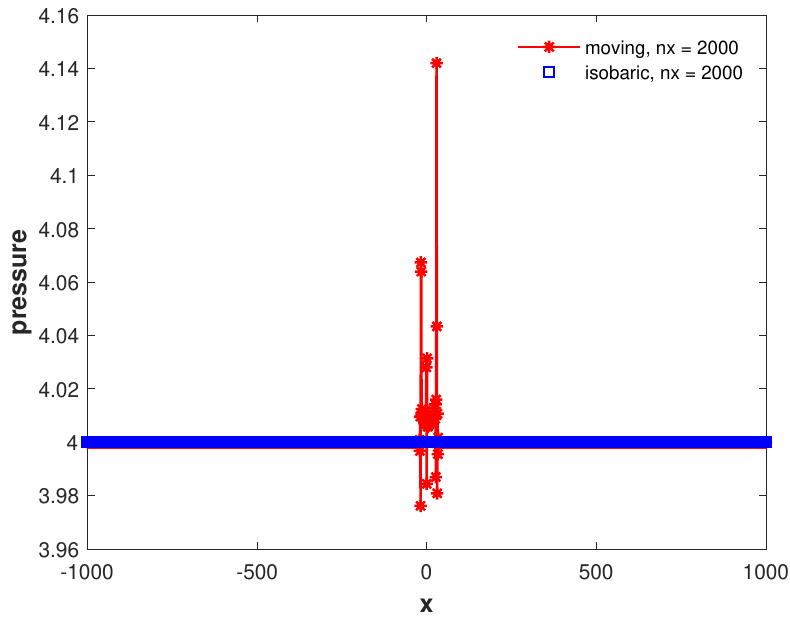}
  }
  \caption{Example \ref{Ripa:isobaric1D}: Numerical solutions of the surface level $h+b$, velocity $u$, pressure $p$ (from left to right) with the initial value (\ref{Ripa:isoini}) at time $t=10$, using 2000 grid cells.  }\label{Ripa:pof}
\end{figure}

\subsubsection{Two-dimensional tests}
\begin{example}{\bf Accuracy test }\label{Ripa:accuracy2D}
\end{example}
In this example, we test the high-order accuracy of our constructed well-balanced DG method for the 2D Ripa model. The bottom function and initial conditions on the domain $[0,1]\times[0,1]$ are given by
\begin{equation}\label{Ripa:acc2d}
\begin{aligned}
 & b(x,y) = \sin(2\pi x)+\cos(2\pi y), \\
 & h(x,y,0)=10+e^{\sin(2\pi x)}\cos(2\pi y),\quad hu(x,y,0) = \sin(\cos(2\pi x))\sin(2\pi y),\\
 & hv(x,y,0) = \cos(2\pi x)\cos(\sin(2\pi y)), \quad\theta(x,y,0) = 2 + \sin(2\pi x)\cos(2\pi y).
\end{aligned}
\end{equation}
We impose periodic boundary conditions and compute until  $t = 0.01$ to avoid the appearance of the shocks in the solution. The reference solution is obtained by the third-order still-water equilibria preserving DG scheme \cite{qian2018high} on 800$\times$800 uniform cells. The $L^1$ errors and convergence rates for the conservative variables $h$, $hu$, $hv$, $h\theta$ as well as the equilibrium variables $E$, $\theta$ are listed in Table \ref{Ripa:DGacc2D}, which show the high order accuracy as we expected.
\begin{table}[htb]
  \centering
  \caption{Example \ref{Ripa:accuracy2D}:  $L^1$ errors and numerical orders of accuracy with initial condition (\ref{Ripa:acc2d}), $t = 0.01$.}\label{Ripa:DGacc2D}
  \begin{tabular}{c c c c c c c c }
  \toprule
    \multirow{2}{*} {$nx\times ny$} & \multicolumn{2}{c}{$h$}&\multicolumn{2}{c}{$ h u $}&\multicolumn{2}{c}{$ h v$}\\
   \cmidrule(lr){2-3} \cmidrule(lr){4-5} \cmidrule(lr){6-7}
    ~ &$L^1$ error &order&$L^1$ error &order&$L^1$ error &order\\
   \midrule
   25$\times$25  & 6.98E{-04} &     -- &  6.77E{-03} &  --  &  7.14E{-03} &  --  \\
   50$\times$50  & 9.83E{-05} &   2.83 &  8.57E{-04} & 2.98 &  9.08E{-04} & 2.98 \\
  100$\times$100 & 1.37E{-05} &   2.84 &  1.11E{-04} & 2.95 &  1.18E{-04} & 2.95 \\
  200$\times$200 & 1.89E{-06} &   2.86 &  1.49E{-05} & 2.89 &  1.59E{-05} & 2.88 \\
  400$\times$400 & 2.48E{-07} &   2.93 &  2.01E{-06} & 2.89 &  2.17E{-06} & 2.88 \\
   \midrule
   \multirow{2}{*} {$nx\times ny$} & \multicolumn{2}{c}{$h\theta$}&\multicolumn{2}{c}{$ E $}&\multicolumn{2}{c}{$ \theta$}\\
   \cmidrule(lr){2-3} \cmidrule(lr){4-5} \cmidrule(lr){6-7}
    ~ &$L^1$ error &order&$L^1$ error &order&$L^1$ error &order\\
   \midrule
     25$\times$25  & 1.48E{-03} &     -- &  2.26E{-04} &    --  &  1.53E{-02} &     --  \\
     50$\times$50  & 1.93E{-04} &   2.94 &  3.10E{-05} &   2.86 &  2.03E{-03} &   2.91 \\
    100$\times$100 & 2.53E{-05} &   2.93 &  4.28E{-06} &   2.86 &  2.67E{-04} &   2.92 \\
    200$\times$200 & 3.39E{-06} &   2.90 &  6.04E{-07} &   2.83 &  3.49E{-05} &   2.93 \\
    400$\times$400 & 4.51E{-07} &   2.91 &  8.08E{-08} &   2.90 &  4.47E{-06} &   2.97 \\
  \bottomrule
  \end{tabular}
\end{table}

\begin{example}{\bf Test for two-dimensional still water well-balanced  property}\label{Ripa:exactC2D_still}
\end{example}
To verify our developed DG method can be extended to two dimensions and indeed maintain the still water well-balanced property over a non-flat bottom, we choose the bottom topography on a domain $[-1,1]\times[-1,1]$,
\begin{equation}\label{Ripa:smobot2D}
  b(x,y)=\left\{\begin{array}{lll}
    0.5e^{-100\left((x+0.5)^2+(y+0.5)^2\right)},&\text{if} \ x<0,\\
    0.6e^{-100\left((x-0.5)^2+(y-0.5)^2\right)},&\text{otherwise},\\
    \end{array}\right.
\end{equation}
and the initial conditions
\begin{equation}\label{Ripa:smoini2D}
 h(x,y,0)+b(x,y) = 3,\quad hu(x,y,0)=0, \quad hv(x,y,0)=0, \quad \theta(x,y,0) = \dfrac{4}{3}.
\end{equation}
We apply our resulting scheme to calculate the solutions up to  $t = 0.1$ with 20$\times $20 and 100$\times $100 uniform rectangular meshes respectively. The $L^1$ and $L^{\infty}$ errors at double precision for the conservative variables $h$, $hu$, $hv$, $h\theta$ as well as the equilibrium variables $E$, $\theta$ are listed in Table \ref{Ripa:wbstill_smo2D}. It can be seen clearly that even under a relatively coarse mesh, the hydrostatic equilibrium state can be exactly preserved in two dimensions.

\begin{table}[htb]
  \centering
  \caption{Example \ref{Ripa:exactC2D_still}: $L^1$ and $L^{\infty}$ errors for two-dimensional still water equilibrium state.}\label{Ripa:wbstill_smo2D}
  \begin{tabular}{c c c c c c c c}
   \toprule
    {error}&{$nx\times ny$}&{$h$}&{$hu$}&{$hv$}&{$h\theta$}&{$E$}&{$\theta$}\\
    \midrule
   \multirow{2}{*} {$L^1$}        &  20$\times$20  & 1.55E{-13} &  1.09E{-14} & 1.08E{-14} & 7.44E{-14} & 7.84E{-14}  & 9.13E{-14} \\
                   ~              & 100$\times$100 & 6.05E{-13} &  2.72E{-14} & 2.68E{-14} & 6.17E{-13} & 6.21E{-13}  & 4.63E{-13} \\
   \cmidrule(lr){2-8}
   \multirow{2}{*} {$L^{\infty}$} &  20$\times$20  & 5.73E{-14} &  9.24E{-15} & 1.04E{-14} & 3.51E{-14} & 3.51E{-14}  & 3.00E{-14} \\
                   ~              & 100$\times$100 & 2.38E{-13} &  3.18E{-14} & 3.24E{-14} & 2.98E{-13} & 3.04E{-13}  & 1.49E{-13} \\
    \bottomrule
  \end{tabular}
\end{table}

\begin{example}{\bf Small perturbation of two-dimensional still water equilibrium}\label{Ripa:perturbation2D}
\end{example}
This example we test is the extension of the classical shallow water equations problem given by LeVeque \cite{leveque1998balancing}, which shows the capability of our proposed well-balanced DG scheme for the perturbation of the hydrostatic state in two dimensions Ripa model. Initial conditions are given by
\begin{equation}\label{Ripa:per_ht2d}
(h,u,v,\theta)(x,y,0) = \left\{\begin{array}{lll}
    (6-b(x,y)+A, 0, 0, \frac{24}{6+A}),&  \text{if} \  0.05\leq x \leq 0.15, \\
    (6-b(x,y),\quad \quad 0, 0, 4), &\text{otherwise},\\
    \end{array}\right.
\end{equation}
with an elliptical hump bottom topography
$$b(x,y)= 3e^{-5(x-0.9)^2-50(y-0.5)^2},$$
on the computational domain $[-2,2]\times [0,1]$. The disturbance is imposed on both the water height and the temperature, and the small pulse $A= 0.001$ is tested with reflective boundary conditions. We compute the solutions on 200$\times$100 cells at times $t=0.05,0.08$, and present the contours of the surface level $h+b$, discharge $hu$, $hv$ and $h\theta$ in Fig. \ref{Ripa:smallper_2D}. The corresponding results of the surface level $h+b$ and discharge $hu$ along the line $y=0.5$ with 200$\times$100 and 400$\times$200 for comparison are displayed in Figs. \ref{Ripa:small5comp_2D_y0.5} and \ref{Ripa:small8comp_2D_y0.5}, respectively. We can observe that the centered contact discontinuity wave remains unmoved in the perturbed region over time, and its amplitude reduces to approximately 6.0005. It illustrates that our scheme can achieve high-resolution interface tracking and capture the complex small features of the flow very well.
\begin{figure}[htb!]
  \centering
  \subfigure[ surface level $h+b$ ]{
  \centering
     \includegraphics[width= 7.5cm,scale=1]{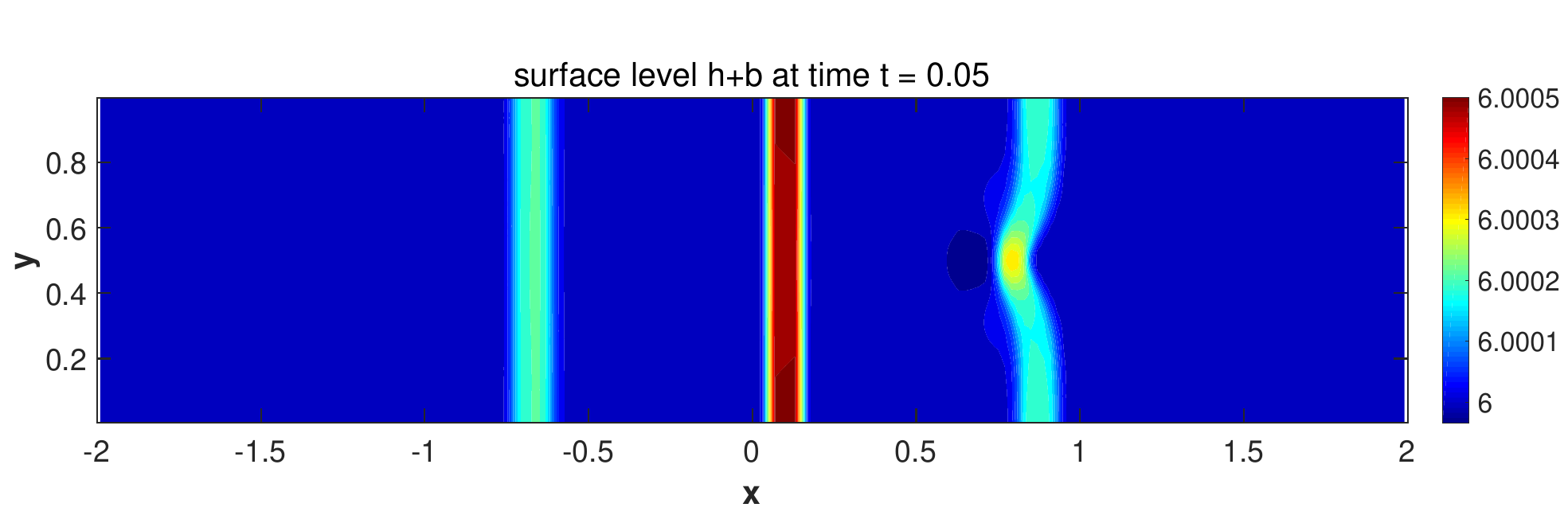}
  }
  \subfigure[ surface level $h+b$ ]{
  \centering
     \includegraphics[width= 7.5cm,scale=1]{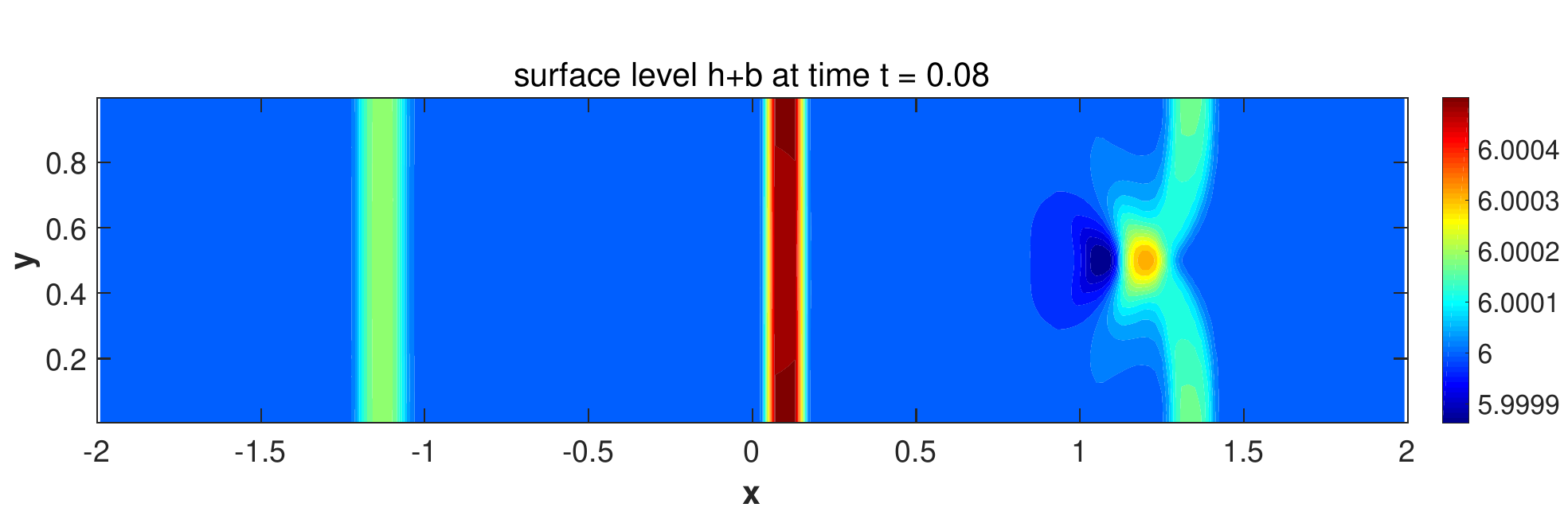}
  }
  \subfigure[discharge $hu$]{
  \centering
     \includegraphics[width= 7.5cm,scale=1]{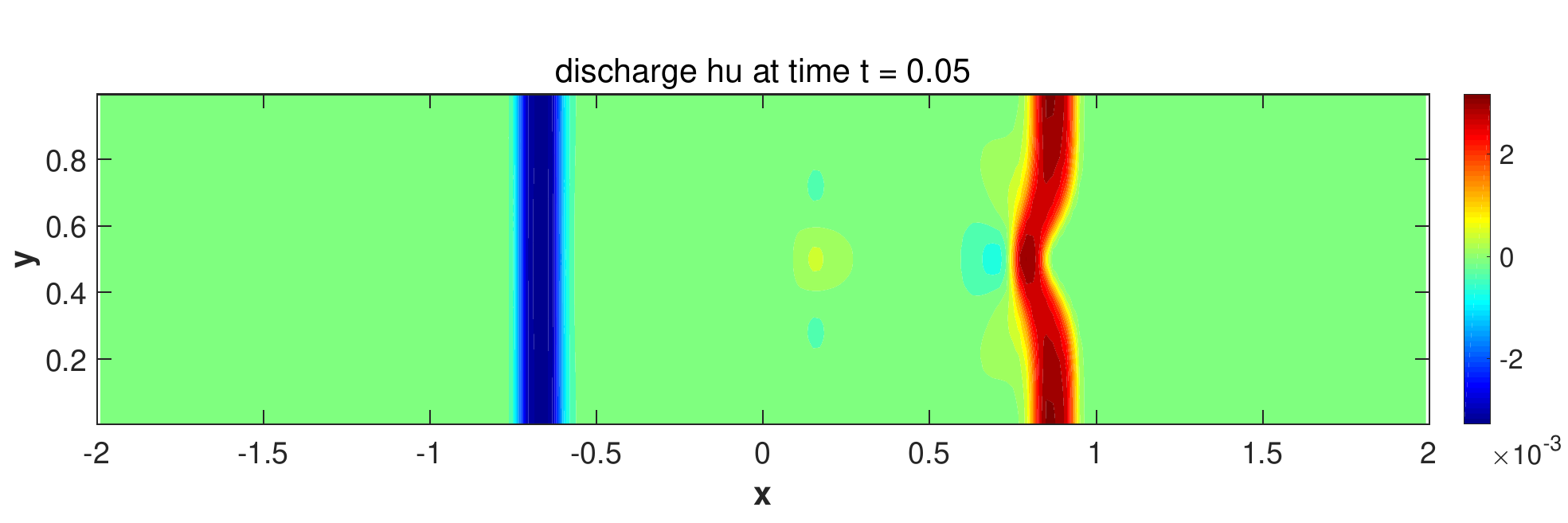}
  }
  \subfigure[discharge $hu$]{
  \centering
     \includegraphics[width= 7.5cm,scale=1]{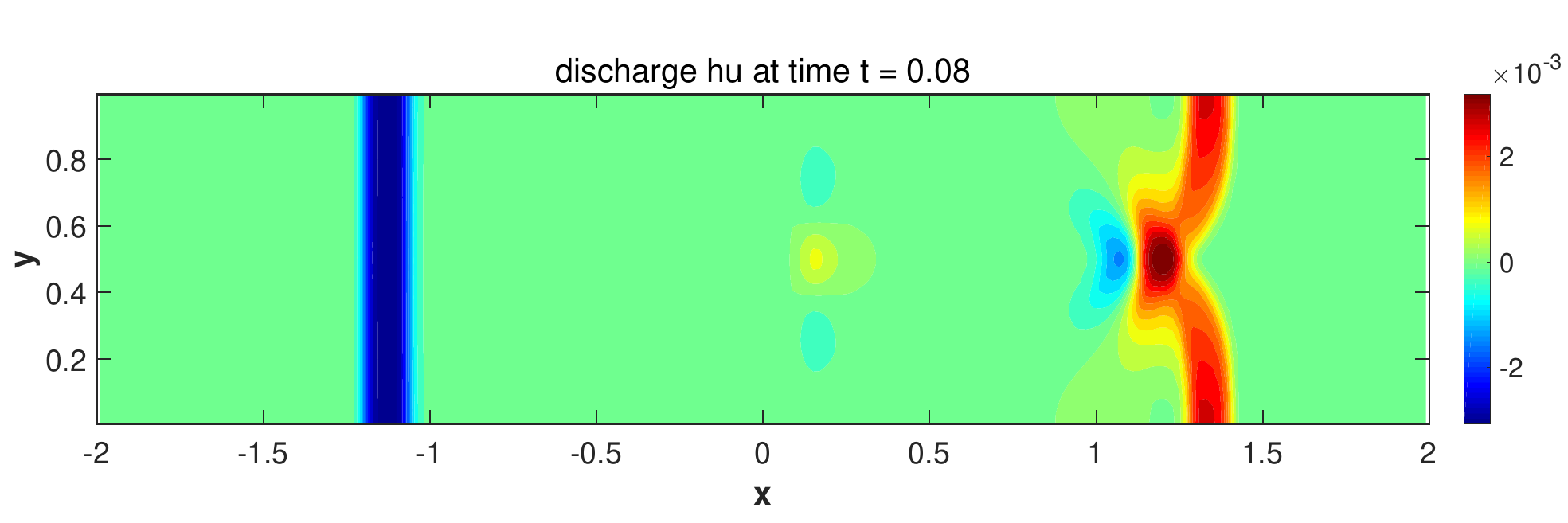}
  }
  \subfigure[discharge $hv$]{
  \centering
     \includegraphics[width= 7.5cm,scale=1]{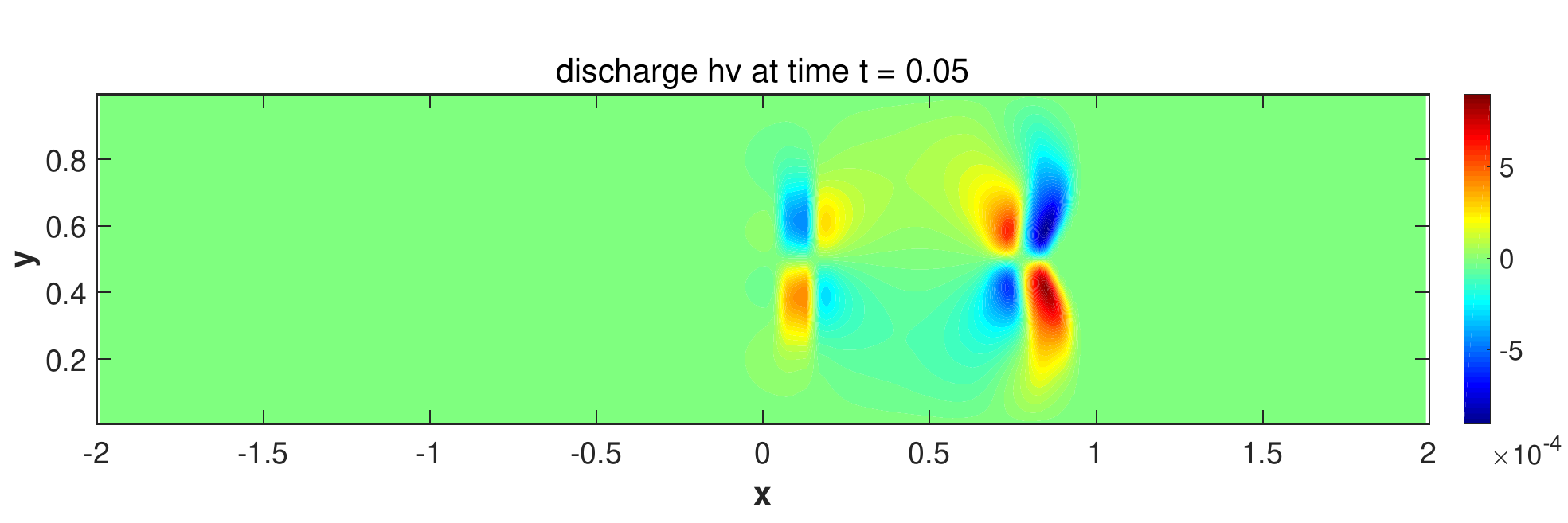}
  }
  \subfigure[discharge $hv$]{
  \centering
     \includegraphics[width= 7.5cm,scale=1]{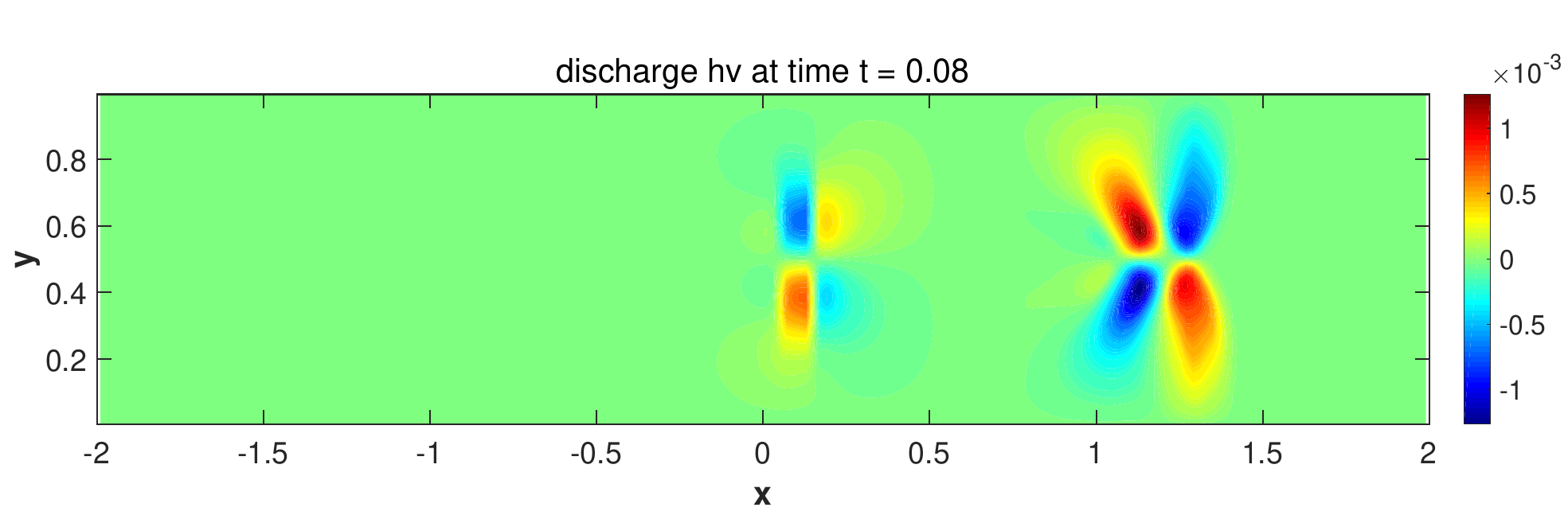}
  }
  \subfigure[$h\theta$]{
  \centering
     \includegraphics[width= 7.5cm,scale=1]{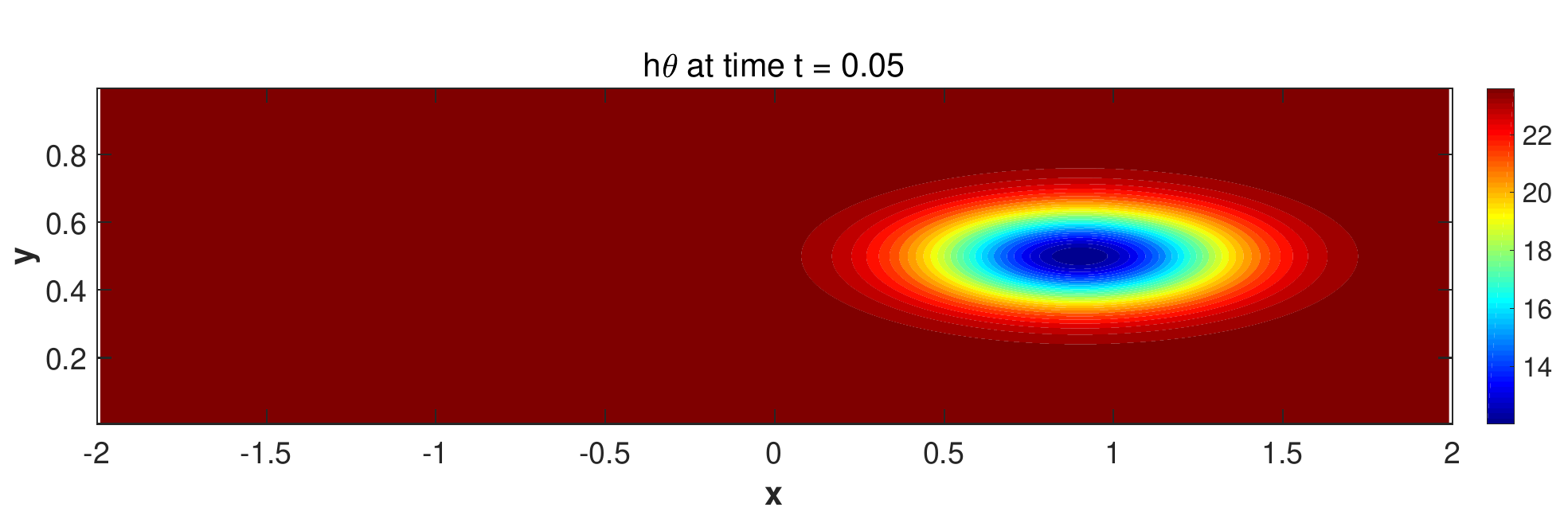}
  }
  \subfigure[$h\theta$]{
  \centering
     \includegraphics[width= 7.5cm,scale=1]{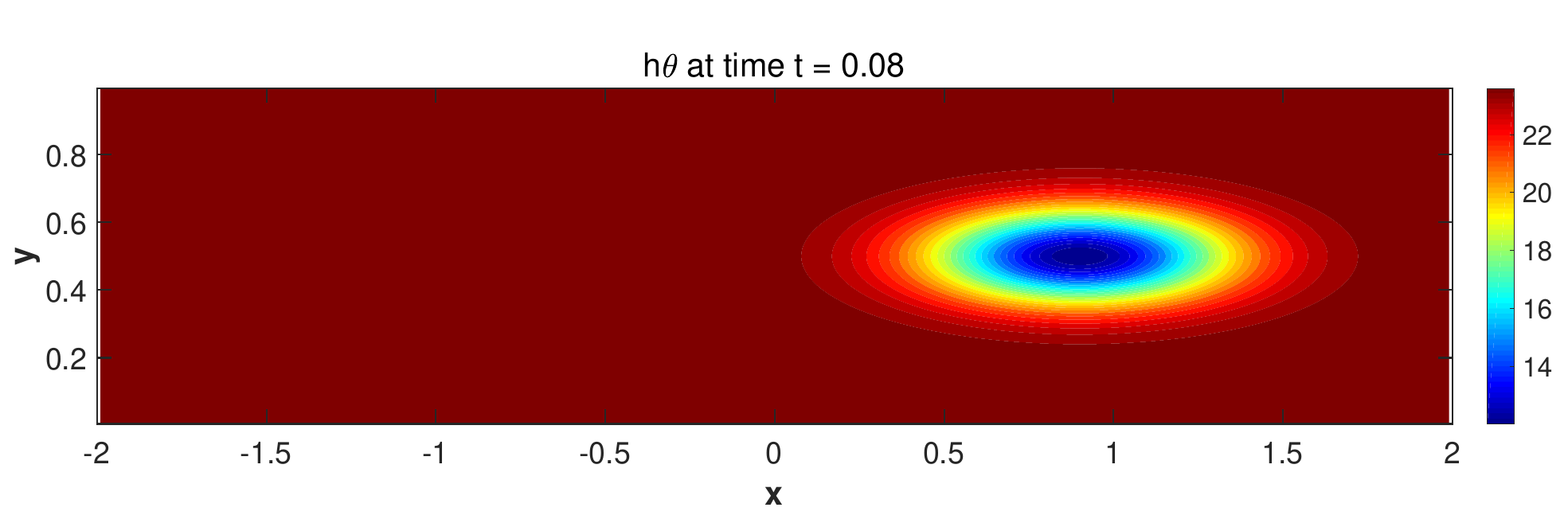}
  }
  \caption{Example \ref{Ripa:perturbation2D}:  From top to bottom: the contours of the surface level $h+b$ from 5.9998 to 6.0005; discharge $hu$ from -0.004 to 0.004; discharge $hv$ from -0.0015 to 0.0015; $h\theta$ from 12 to 24.001. 30 uniformly spaced contour lines at different times $t=0.05$ (left) and $t=0.08$ (right) with 200$\times$100 cells. }\label{Ripa:smallper_2D}
\end{figure}

\begin{figure}[htb!]
  \centering
  \subfigure[ surface level $h+b$  ]{
  \centering
     \includegraphics[width= 5.5cm,scale=1]{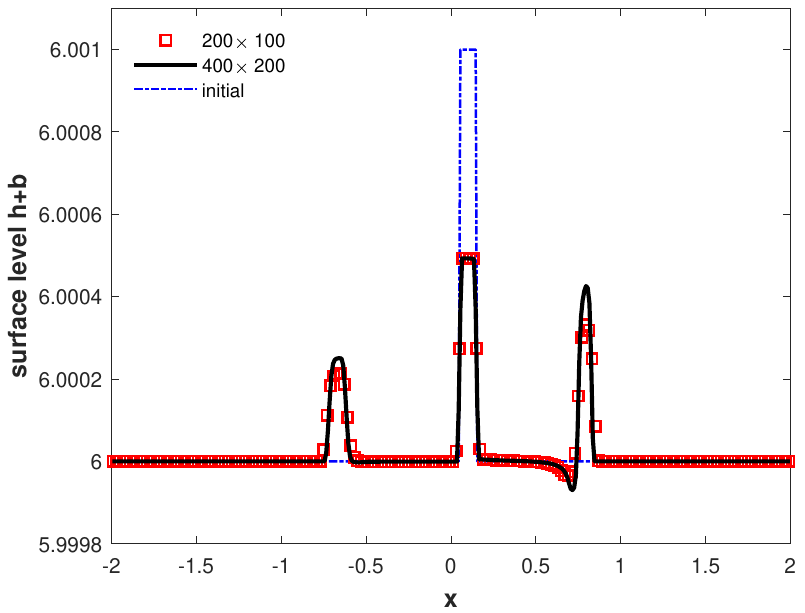}
  }
  \subfigure[discharge $hu$]{
  \centering
     \includegraphics[width= 5.5cm,scale=1]{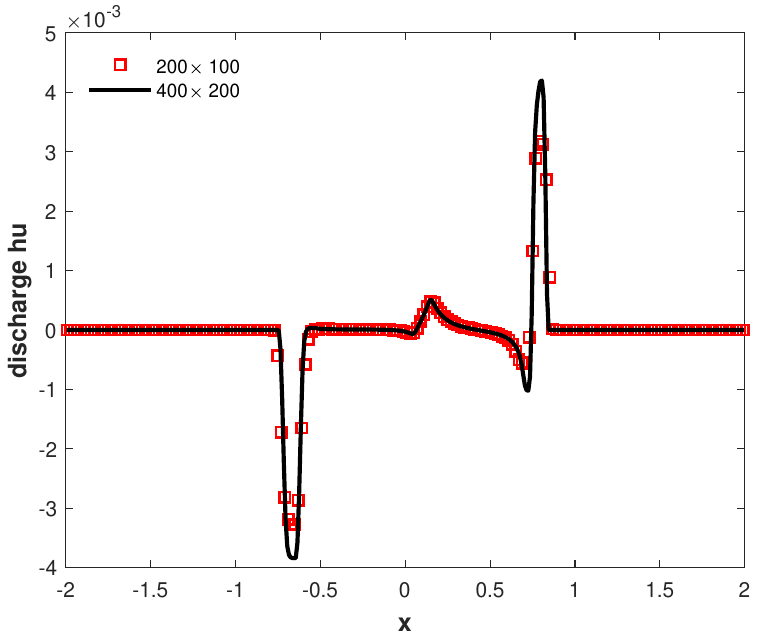}
  }
  \caption{Example \ref{Ripa:perturbation2D}:  Numerical solutions of the surface level $h+b$ and discharge $hu$ with initial value (\ref{Ripa:per_ht2d}) along the line $y=0.5$ at time $t=0.05$,  using 200$\times$100 and 400$\times$200  cells for comparison. }\label{Ripa:small5comp_2D_y0.5}
\end{figure}

\begin{figure}[htb!]
  \centering
  \subfigure[ surface level $h+b$  ]{
  \centering
     \includegraphics[width= 5.5cm,scale=1]{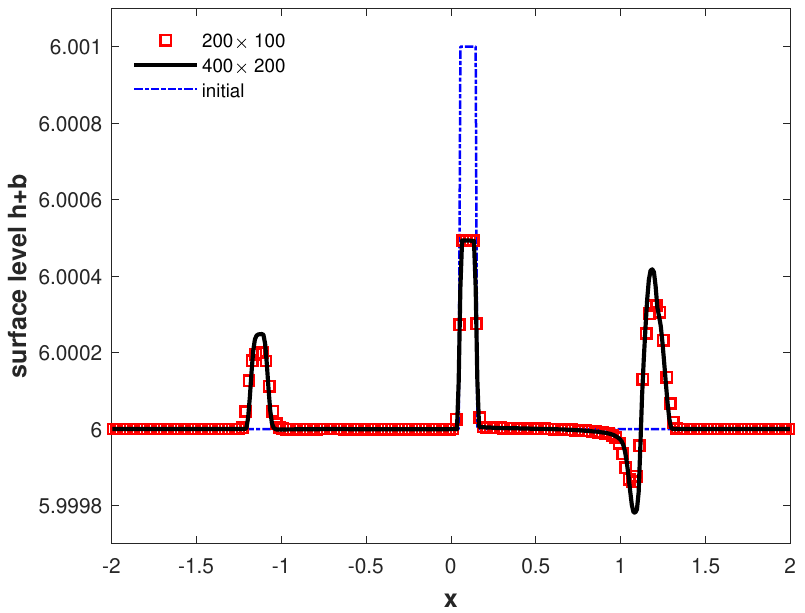}
  }
  \subfigure[discharge $hu$]{
  \centering
     \includegraphics[width= 5.5cm,scale=1]{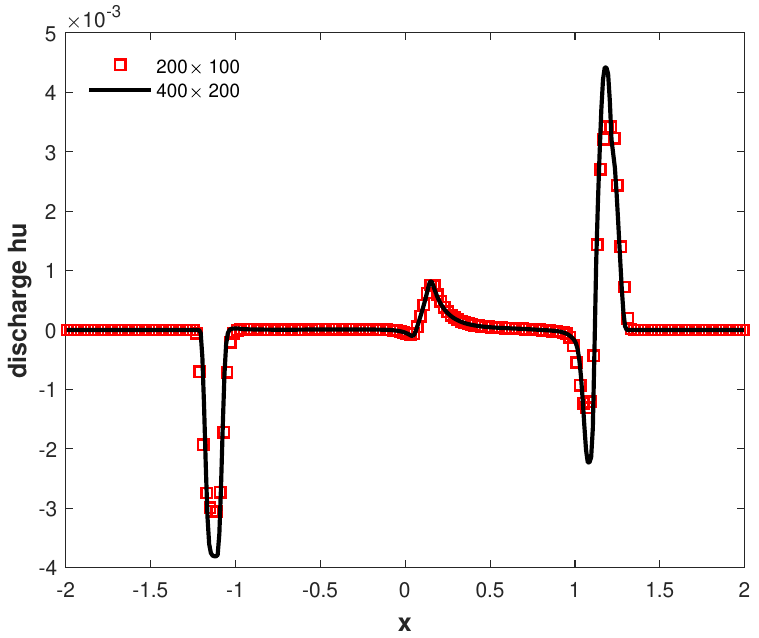}
  }
  \caption{Example \ref{Ripa:perturbation2D}:  Numerical solutions of the surface level $h+b$ and discharge $hu$ with initial value (\ref{Ripa:per_ht2d}) along the line $y=0.5$ at time $t=0.08$,  using 200$\times$100 and 400$\times$200 cells for comparison. }\label{Ripa:small8comp_2D_y0.5}
\end{figure}

\begin{example}{\bf The radial dam break over a flat bottom topography}\label{Ripa:flatdam2D}
\end{example}
We test the same dam breaking problem taken from \cite{touma2015well} over a flat bottom topography $b(x,y)=0$. The initial conditions are defined as
\begin{equation}\label{Ripa:radial_dam2d}
(h,u,v,\theta)(x,y,0) = \left\{\begin{array}{lll}
    (2,0,0,1),&  \text{if} \  x^2+y^2 \leq 0.25, \\
    (1,0,0,1.5),&\text{otherwise},\\
    \end{array}\right.
\end{equation}
on the domain $[-1,1]\times [-1,1]$. Transmissive boundary conditions are employed and the stopping time is set as $t=0.05$.
We compute the solutions on 100$\times$100 uniform cells, and display the profiles of the water height $h$, temperature $\theta$ and pressure $\frac{1}{2}gh^2\theta$ in Fig. \ref{Ripa:damflat2D}, as well as the scatter plot of the cross sections of the water height along the line $x=0$ and $y=x$ in Fig. \ref{Ripa:damflat2D_x0}. Oscillation-free solutions can be observed by our well-balanced DG method for the two-dimensional Ripa model.

\begin{figure}[htb!]
  \centering
  \subfigure[ water height $h$ ]{
  \centering
     \includegraphics[width= 4.8cm,scale=1]{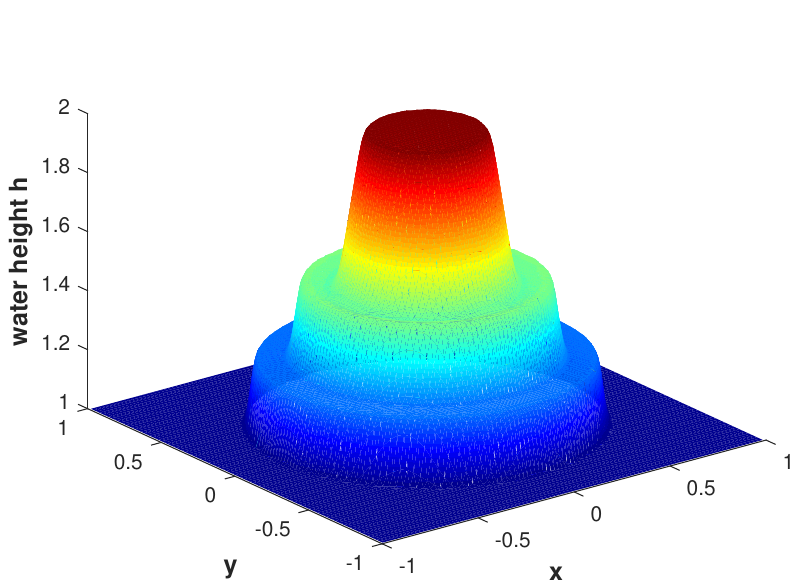}
  }
  \subfigure[temperature $\theta$]{
  \centering
     \includegraphics[width= 4.8cm,scale=1]{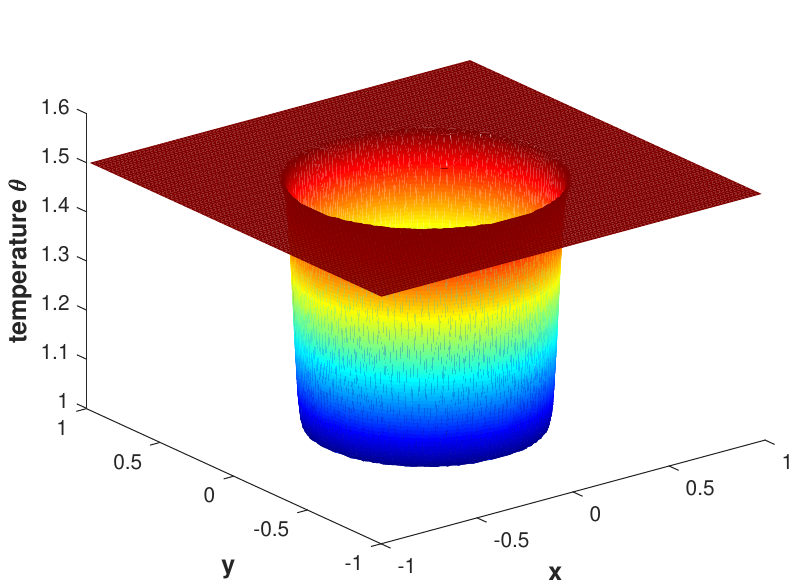}
  }
  \subfigure[pressure $\frac{1}{2}gh^2\theta$]{
  \centering
     \includegraphics[width= 4.8cm,scale=1]{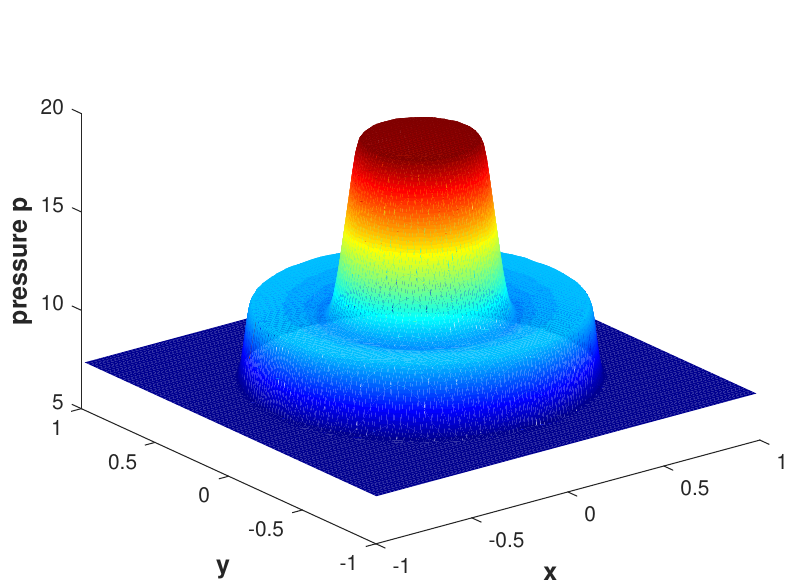}
  }
  \caption{Example \ref{Ripa:flatdam2D}: From left to right: 3D view of the water height $h$, temperature $\theta$ and pressure $\frac{1}{2}gh^2\theta$, with the initial value (\ref{Ripa:radial_dam2d}) at time $t=0.05$, using $100\times 100$ cells.}\label{Ripa:damflat2D}
\end{figure}

\begin{figure}[htb!]
  \centering
  \subfigure[ water height $h$, $x=0$]{
  \centering
     \includegraphics[width= 5.5cm,scale=1]{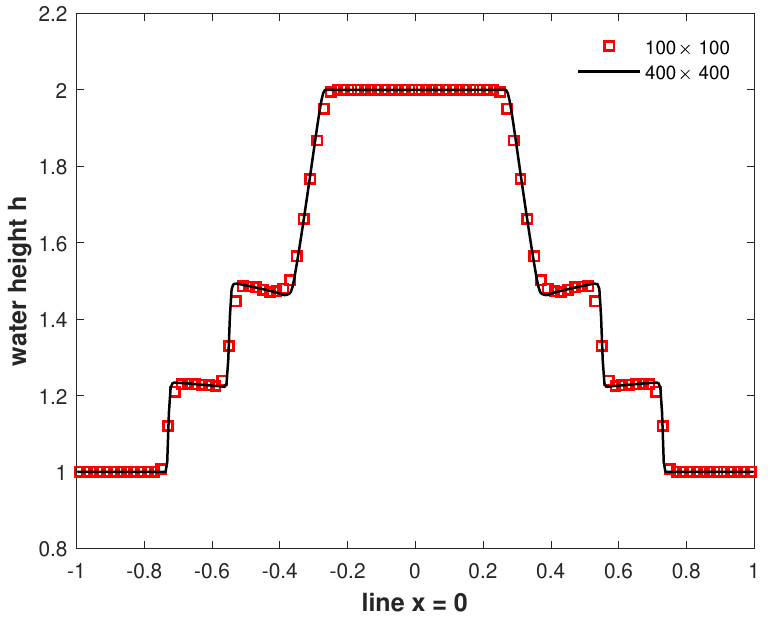}
  }
  \subfigure[ water height $h$, $y=x$]{
  \centering
     \includegraphics[width= 5.5cm,scale=1]{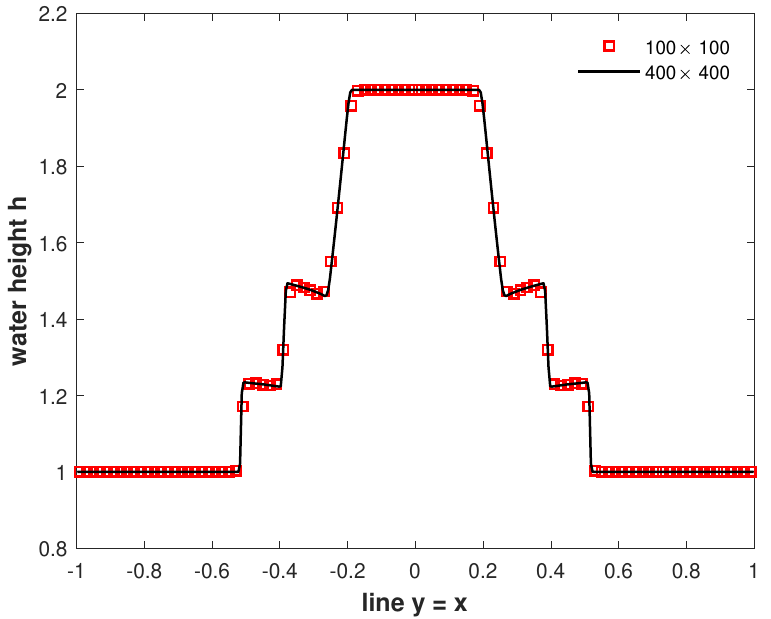}
  }
  \caption{Example \ref{Ripa:flatdam2D}:  Scatter plot of the cross sections of the water height along the line $x=0$ (left), $y=x$ (right), with the initial value (\ref{Ripa:radial_dam2d}) at time $t=0.05$, using $100\times 100$ and $400\times 400$ cells for comparison.}\label{Ripa:damflat2D_x0}
\end{figure}

\begin{example}{\bf Steady state solution}\label{Ripa:steady2D}
\end{example}
In this example, we consider a steady state solution which consists of two still water states \eqref{still_water:Ripa} connected through the temperature jump. On the computational domain $[-1,1]\times[-1,1]$, the initial conditions are given by
\begin{equation}\label{wbli2d}
(h,u,v,\theta)(x,y,0) = \left\{\begin{array}{lll}
    (3-b(x,y), 0, 0, \frac{4}{3}),&  \text{if} \  x^2+y^2 \leq 0.25, \\
    (2-b(x,y), 0, 0, 3), &\text{otherwise},\\
    \end{array}\right.
\end{equation}
where the bottom topography is the same as in (\ref{Ripa:smobot2D}). Here the gravitation constant is taken as $g = 1$. For comparison, we run the simulation on $100\times 100$ uniform cells by the still water equilibria preserving DG method \cite{qian2018high} and isobaric equilibria preserving DG method \eqref{scheme:ripaiso2d}-\eqref{flux:ripaiso2d}, respectively. The final time is set as $t=0.12$, and the results are plotted in Fig. \ref{Ripa:wbli2d_2D}. We can observe the circular-shape pressure oscillations by the `wb$\_$still' scheme and the $\theta$ component is over smeared around the contact wave. By contrast,  the isobaric equilibria preserving DG scheme preserves the steady state without spurious oscillations.

\begin{figure}[htb!]
  \subfigure[ surface level $h+b$ ]{
  \centering
     \includegraphics[width= 3.5cm,scale=1]{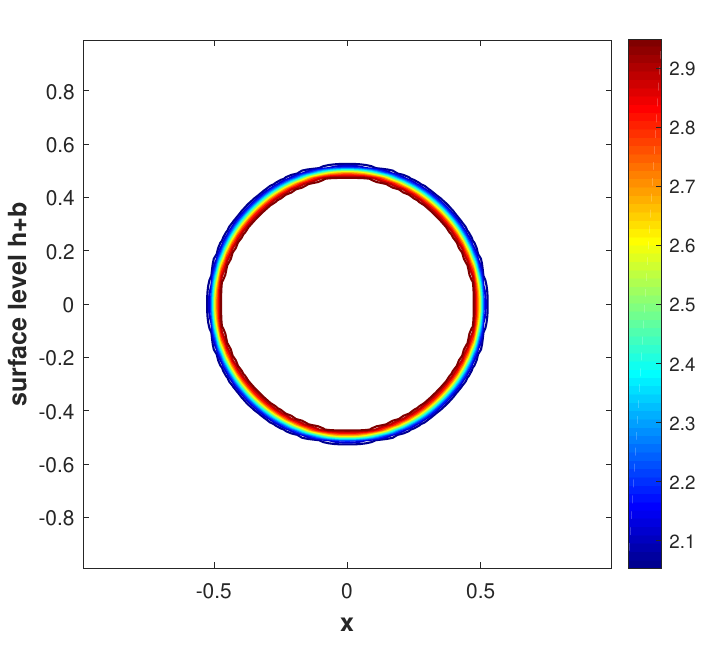}
  }
  \subfigure[temperature $\theta$]{
  \centering
     \includegraphics[width= 3.5cm,scale=1]{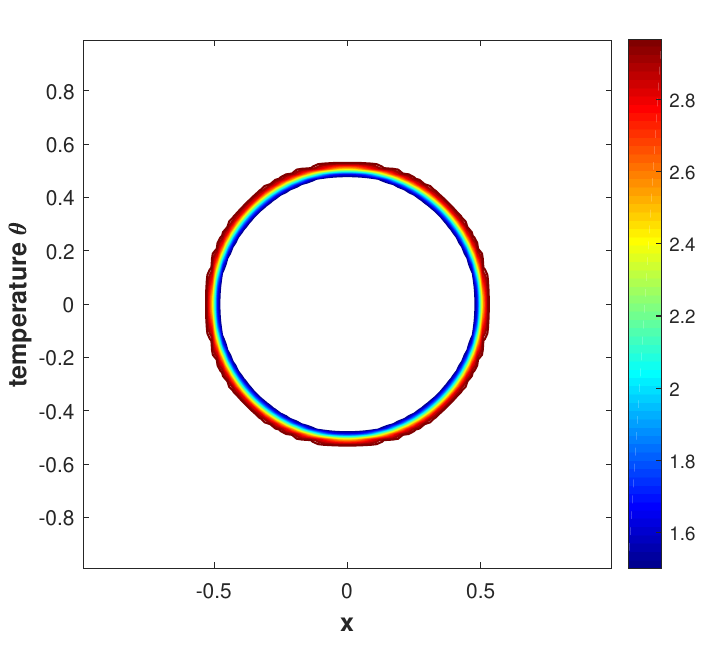}
  }
  \subfigure[pressure $\frac{1}{2}gh^2\theta$]{
  \centering
     \includegraphics[width= 3.5cm,scale=1]{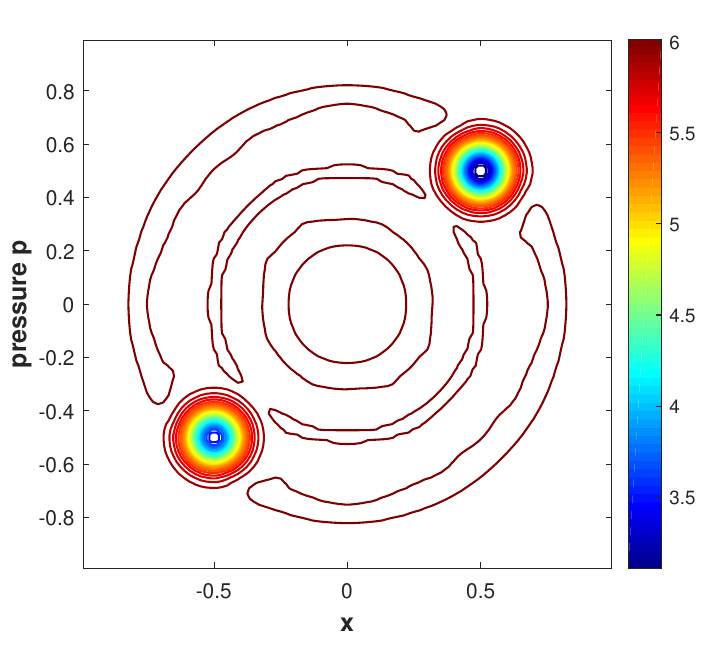}
  }
  \subfigure[pressure $\frac{1}{2}gh^2\theta$]{
  \centering
     \includegraphics[width= 3.5cm,scale=1]{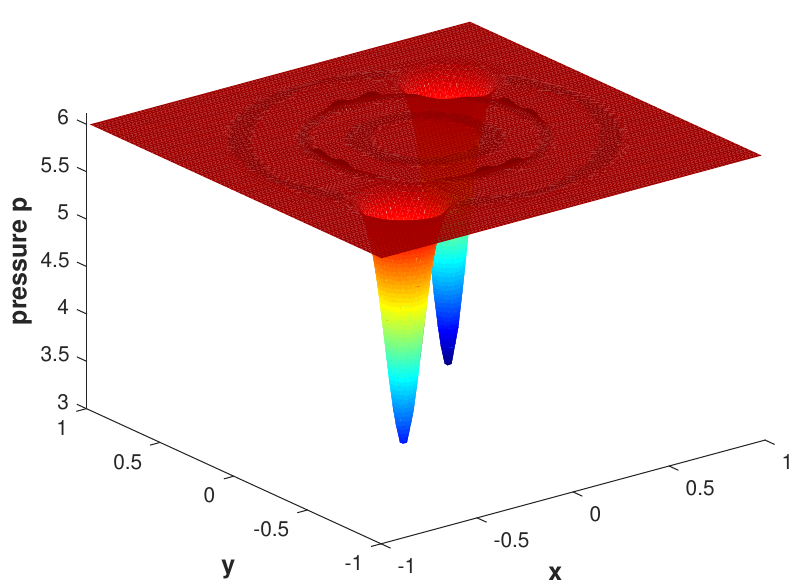}
  }

  \centering
  \subfigure[ surface level $h+b$ ]{
  \centering
     \includegraphics[width= 3.5cm,scale=1]{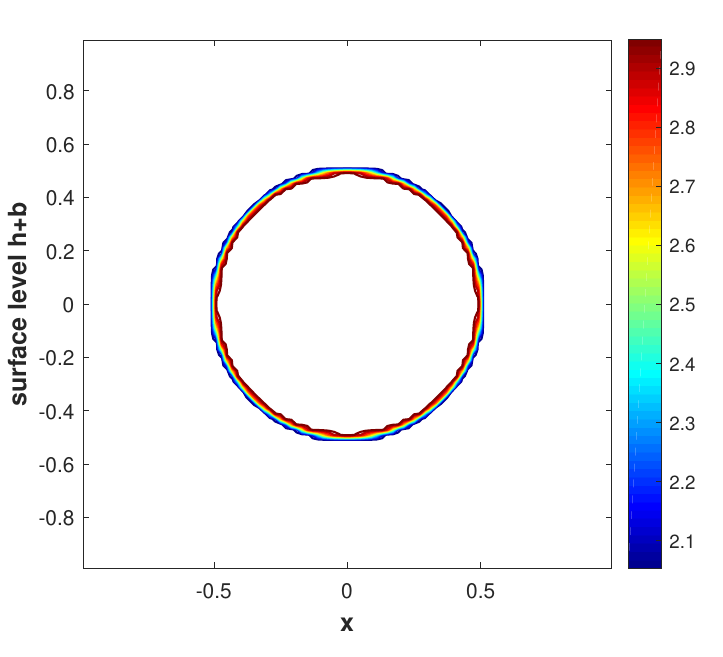}
  }
  \subfigure[temperature $\theta$]{
  \centering
     \includegraphics[width= 3.5cm,scale=1]{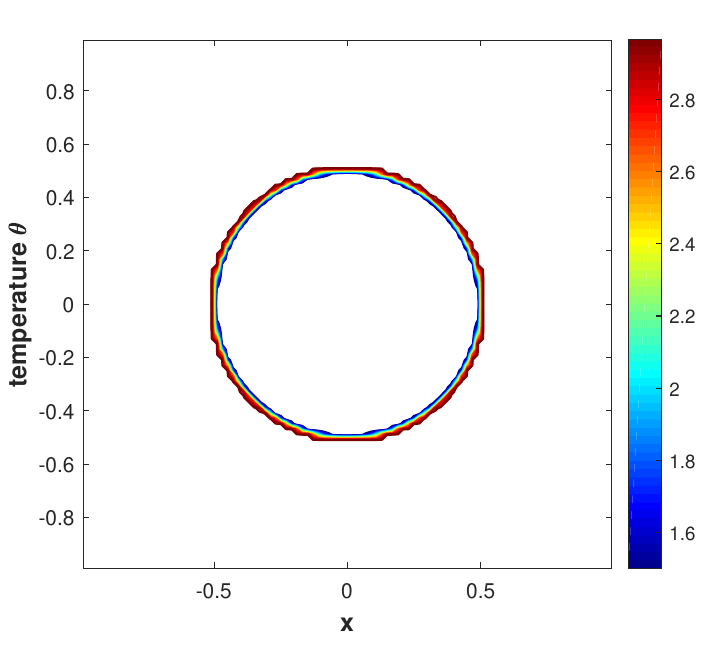}
  }
  \subfigure[pressure $\frac{1}{2}gh^2\theta$]{
  \centering
     \includegraphics[width= 3.5cm,scale=1]{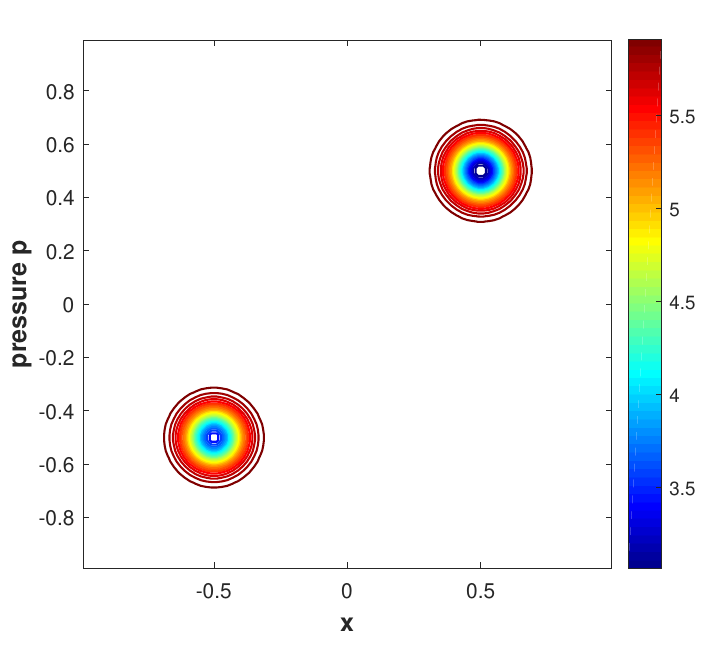}
  }
  \subfigure[pressure $\frac{1}{2}gh^2\theta$]{
  \centering
     \includegraphics[width= 3.5cm,scale=1]{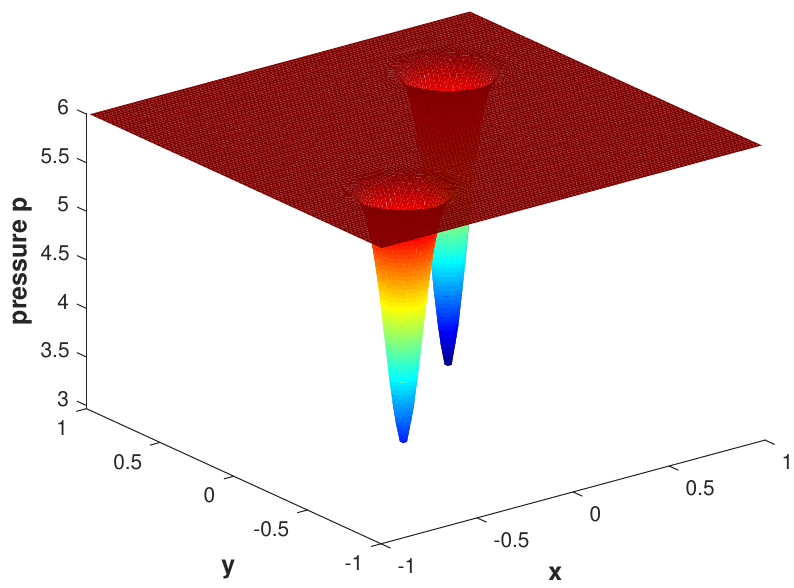}
  }
  \caption{Example \ref{Ripa:steady2D}: Steady state solution with initial value (\ref{wbli2d}) at time $t=0.12$, using $100\times 100$ cells. From left to right: contour plots of the surface level, temperature, and pressure, 3D view of pressure. From top to bottom: still water and isobaric equilibria preserving DG scheme. }\label{Ripa:wbli2d_2D}
\end{figure}

Next, we add a perturbation to the water height
\begin{equation}\label{perli2d}
(h,u,v,\theta)(x,y,0) = \left\{\begin{array}{lll}
    (3-b(x,y)+A, 0, 0, \frac{4}{3}),&\text{if} \  0.01 \leq x^2+y^2 \leq 0.09, \\
    (3-b(x,y), 0, 0, \frac{4}{3}),&\text{if} \  x^2+y^2<0.01 \ \text{or} \ 0.09 < x^2+y^2 < 0.25, \\
    (2-b(x,y), 0, 0, 3), &\text{otherwise},\\
    \end{array}\right.
\end{equation}
with the parameter $A=0.1$. We simulate up to $t=0.15$ with $100\times 100$ uniform cells. The solutions computed by the same two well-balanced DG methods are presented in Fig. \ref{Ripa:perli2d_2D}. For the `wb$\_$still' scheme, the interaction between the circular-shaped pressure oscillations and the perturbation leads to the appearance of parasitic waves. While the isobaric equilibria preserving scheme captures the perturbation more accurately.

\begin{figure}[htb!]
  \centering
  \subfigure[ surface level $h+b$ ]{
  \centering
     \includegraphics[width= 3.5cm,scale=1]{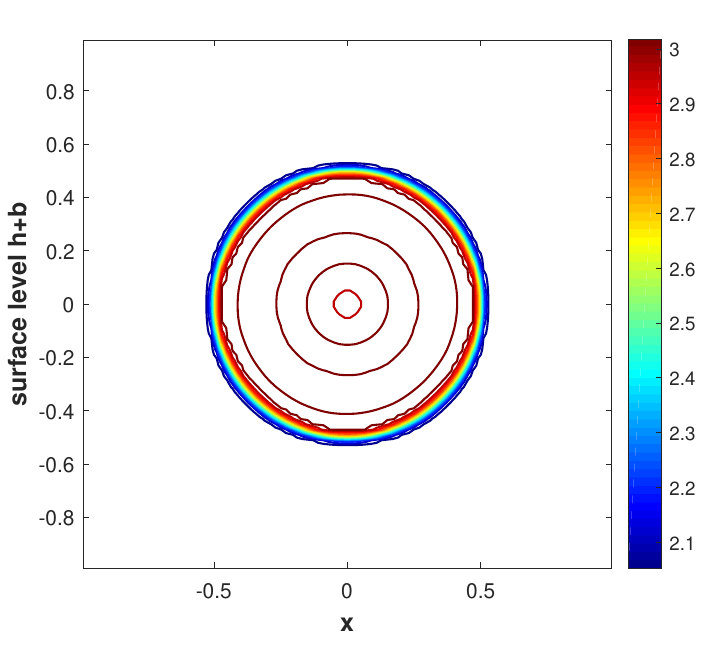}
  }
  \subfigure[temperature $\theta$]{
  \centering
     \includegraphics[width= 3.5cm,scale=1]{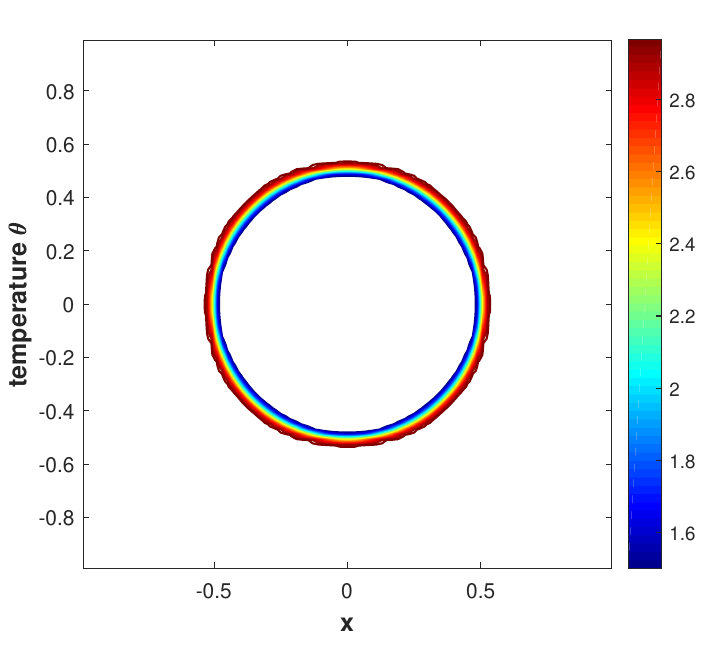}
  }
  \subfigure[pressure $\frac{1}{2}gh^2\theta$]{
  \centering
     \includegraphics[width= 3.5cm,scale=1]{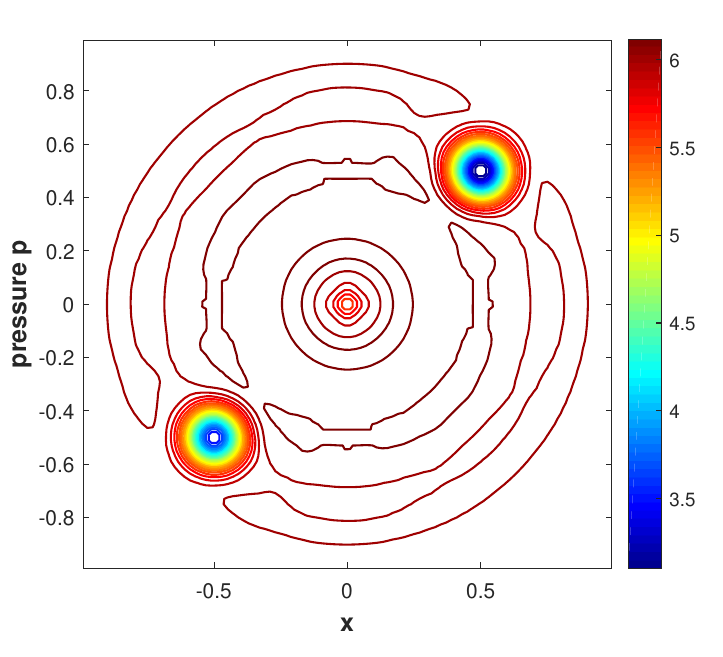}
  }
  \subfigure[pressure $\frac{1}{2}gh^2\theta$]{
  \centering
     \includegraphics[width= 3.5cm,scale=1]{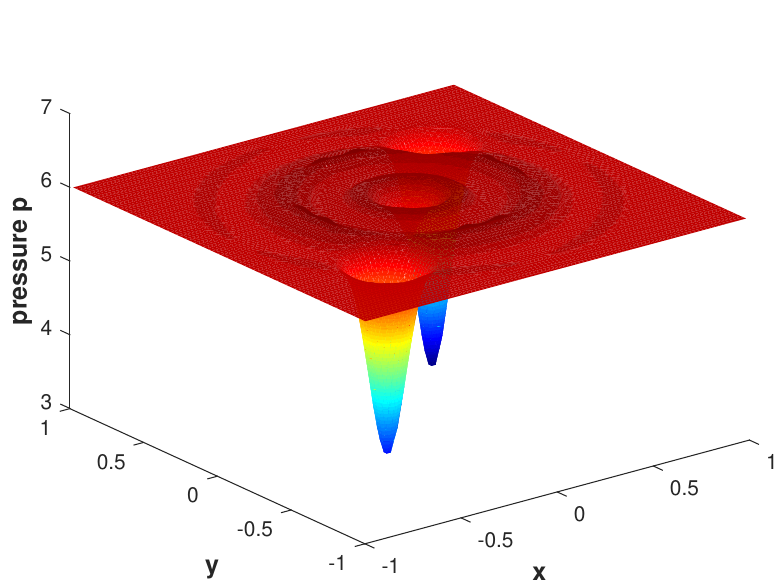}
  }

  \subfigure[ surface level $h+b$ ]{
  \centering
     \includegraphics[width= 3.5cm,scale=1]{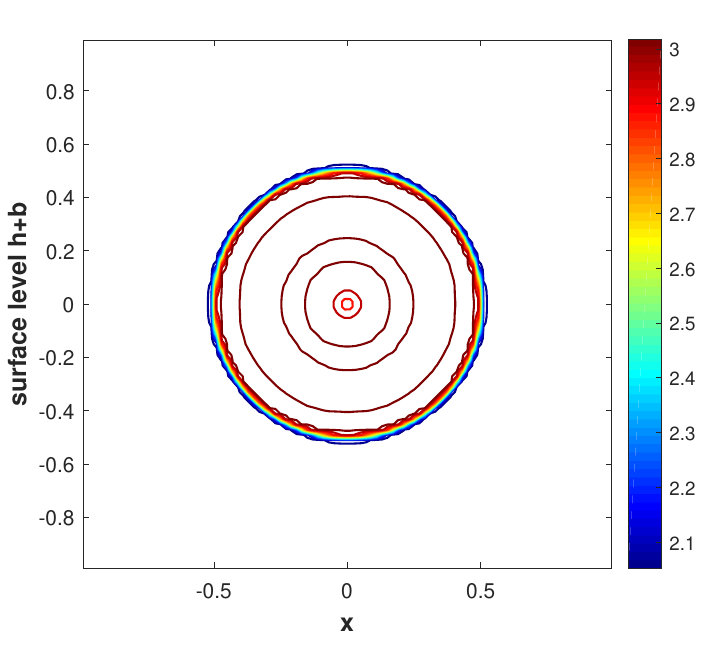}
  }
  \subfigure[temperature $\theta$]{
  \centering
     \includegraphics[width= 3.5cm,scale=1]{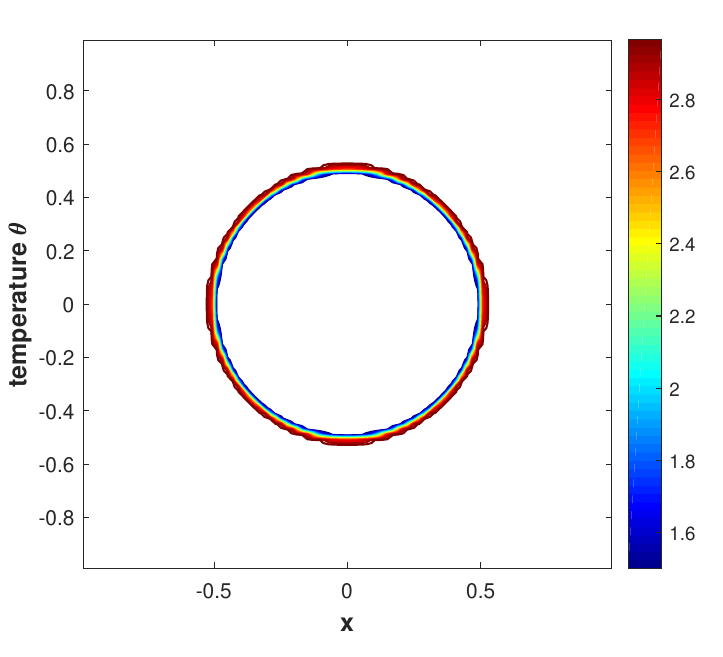}
  }
  \subfigure[pressure $\frac{1}{2}gh^2\theta$]{
  \centering
     \includegraphics[width= 3.5cm,scale=1]{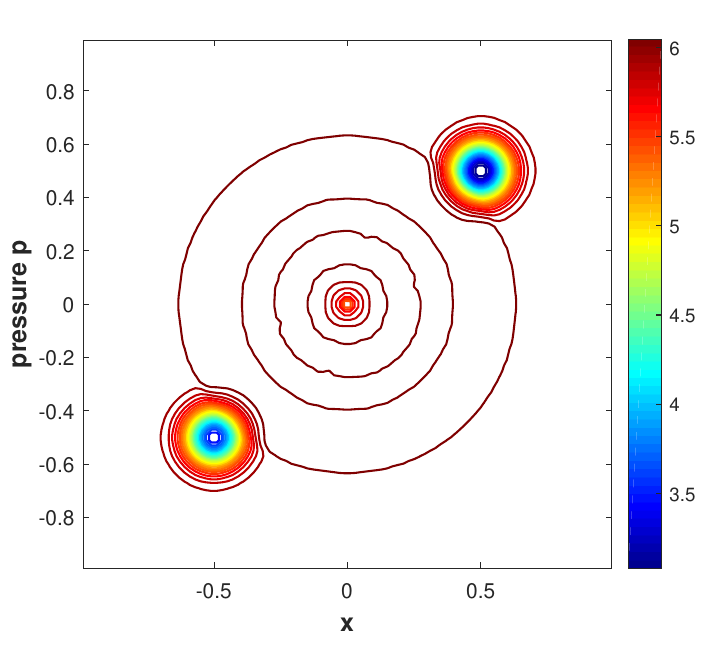}
  }
  \subfigure[pressure $\frac{1}{2}gh^2\theta$]{
  \centering
     \includegraphics[width= 3.5cm,scale=1]{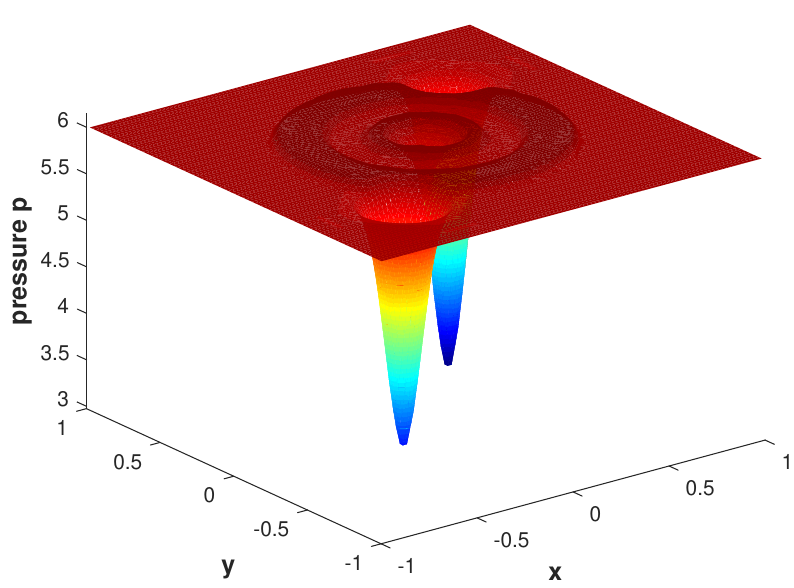}
  }
  \caption{Example \ref{Ripa:steady2D}: Small perturbation of the steady state solution with initial value (\ref{perli2d}) at time $t=0.15$, using $100\times 100$ cells. From left to right: contour plots of the surface level, temperature and pressure, 3D view of pressure. From top to bottom: still water and isobaric equilibria preserving DG scheme.}\label{Ripa:perli2d_2D}
\end{figure}

\section{Conclusion}\label{se:co}
We have developed a novel framework for designing the well-balanced discontinuous Galerkin method for hyperbolic balance laws with the equilibrium preserving space. This is achieved by approximating the equilibrium variables using piecewise polynomial spaces in the DG method and employing hydrostatic reconstruction for well-balanced numerical fluxes. The well-balanced property is thus ensured for general equilibrium steady states, without the need for reference equilibrium state recovery or the ODE solver for a stationary solution. The scheme has been theoretically analyzed and numerically validated under various conditions, demonstrating its ability to accurately preserve both hydrostatic and moving equilibrium states.  Additionally, it exhibits high-order accuracy and essentially non-oscillatory properties, and performs well in capturing small perturbations of nearly equilibria flow on coarse meshes. We anticipate that this framework can be extended for the development of equilibrium-preserving methods in general scenarios. In future work, we plan to explore the corresponding equilibrium states with general convex equations in different coordinate systems.

\section*{Declarations}

\subsection*{Data Availability}
The datasets generated during the current study are available from the corresponding author upon reasonable request. They support
our published claims and comply with field standards.

\subsection*{Competing of interest}
The authors declare that they have no known competing financial interests or personal relationships that could have appeared to
influence the work reported in this paper.

\subsection*{Funding}
The research of Yinhua Xia was partially supported by National Key R\&D Program of China No. 2022YFA1005202/2022YFA1005200, and NSFC grant No. 12271498. The research of Yan Xu was partially supported by  NSFC grant No. 12071455.
%
%

\addcontentsline{toc}{section}{References}
\bibliographystyle{abbrv}
\normalem
\bibliography{shortreference}

\end{document}